\documentclass[11pt]{article}
\usepackage{amssymb}
\usepackage{amsmath}
\usepackage{amsfonts}
\usepackage{textcomp}
\usepackage{amsthm}
\usepackage{bookmark}
\usepackage{arydshln}
\usepackage{dsfont}
\usepackage{changes}
\usepackage{color}
\usepackage{verbatim}
\usepackage{cite}

\date{\today}

\newtheorem{Theorem}{Theorem}[section]
\newtheorem{Lemma}{Lemma}[section]
\newtheorem{Remark}{Remark}[section]

\newtheorem{Definition}{Definition}[section]

\numberwithin{equation}{section} \theoremstyle{plain}

\def\R{{\textbf{R}}}
\def\H{\mathcal{H}}

\def\dsum{\displaystyle\sum}

\def\f{\frac}
\def\R{\mathbb R}
\def\N{\mathbb N}

\def\e{\mathrm{e}}

\def\g{\gamma}

\def\al{\alpha}

\def\ds{\displaystyle}
\def\l{\left}
\def\r{\right}

\def\n{\nabla}
\title{Ground States of Attractive Fermi Schr\"{o}dinger Systems with Ring-Shaped Potentials}
\author{
 Yujin Guo\thanks{School of Mathematics and Statistics, and Key Laboratory of Nonlinear Analysis $\&$ Applications (Ministry of Education), Central China Normal University,  Wuhan 430079, P. R. China.  Y. J. Guo is partially supported by National Key R $\&$ D Program of China (Grant 2023YFA1010001), and NSF of China (Grants 12225106 and 12371113). Email: \texttt{yguo@ccnu.edu.cn}.},
\, Yan Li\thanks{School of Mathematics and Statistics, Central China Normal University,  Wuhan 430079,
	P. R. China.  Email: \texttt{yanlimath@mails.ccnu.edu.cn}.},
\, and\, Shuang Wu\thanks{School of Mathematics and Statistics, Central China Normal University,  Wuhan 430079,
	P. R. China. Email: \texttt{swu@mails.ccnu.edu.cn}.}}

\begin{document}
\maketitle

\begin{abstract}
As an application of the finite-rank Lieb-Thirring inequality established in [R. L. Frank, D. Gontier and M. Lewin, Comm. Math. Phys., 2021], we study ground states of mass-critical N-coupled Fermi nonlinear Schr\"{o}dinger systems with attractive interactions in $\R^3$, which are trapped in  ring-shaped potentials. For any given $N\in\N^+$, we prove that ground states exist if $0<a<a_N^*$, where $a$ denotes the strength of attractive interactions in the system, and $a_N^*$ is the best constant of a finite-rank Lieb-Thirring inequality. Moreover, for some $N\in\N^+$, we also prove the nonexistence of minimizers for the system as soon as $a\geq a_N^*$. Applying the energy estimates and the blow-up analysis, we further analyze the mass concentration behavior  of ground states for the system as $a\nearrow a_N^*$.
\end{abstract}

{\bf Keywords:}\  Fermi  systems; Ground states; $L^2$-critical variational problems;  Ring-shaped potentials; Mass concentration	
	
\bigskip
	
\section{Introduction}

There have been significant progresses (cf. \cite{BDZ,G-S}) on the experimental manipulations of cold atoms since the past few decades. A key feature of these experiments is the use of a trapping potential that confines the particles to a limited spatial region. The ability of varying the confining potential, which may also depend on the spatial directions, allows one (cf. \cite{KWW,SMS}) to control the effective spatial geometry of the particle systems. In particular, the trapped Fermi gases have garnered interest as the possible precursors of paired-fermion condensates at lower temperatures (cf. \cite{BR,Bru}) and have been studied experimentally in ultracold fermionic clouds (cf. \cite{De,Hol}). It turns out from physical experiments  (cf. \cite{PRA,PRL,PRX}) that the ring-shaped potentials play a crucial role in investigating the trapped fermionic atoms, as they represent the simplest multiple connected geometry for guiding coherent matter waves. These novel experimental advances have significantly contributed to the mathematical theories and numerical methods of trapped Fermi nonlinear Schr\"{o}dinger systems, see \cite{G-S,Naf,Dav,Dou,Bra,Lieb10} and the references therein.

When studying fermions, such as  electrons, neutrons or protons, it is natural to consider the system of orthonormal functions \cite{Lieb10}. Following the arguments of \cite{FGL21,GLN21,Lieb10}, the ground states  of $N$-spinless mass-critical Fermi nonlinear Schr\"{o}dinger systems  in $\R^3$ with attractive interactions and trapping potentials  can be described  as the minimizers of the  following constraint variational problem: for any given $N\in \mathbb{N}^+$,
    \begin{equation}\label{1.2}
		I_a(N):=\inf\Big\{E_a(u_1,\cdots,u_N):\, u_i\in \H,\ \langle u_i,u_j\rangle_{L^2(\R^3,\mathbb{C})}=\delta_{ij},\,1\leq i,\,j\leq N\Big\},\ a>0,
	\end{equation}
    where the energy functional $E_a(u_1,\cdots,u_N)$  is given by
	\begin{equation*}
    E_a(u_1,\cdots,u_N)=\sum_{i=1}^N\int_{\R^3}\Big(|\n u_i|^2+V(x)|u_i|^2\Big)dx-a\int_{\R^3}\Big(\sum_{i=1}^N|u_i|^2\Big)^{\f53}dx.
	\end{equation*}
Here $V(x)\geq 0$ denotes the trapping potential, $a>0$ represents the attractive strength of the quantum
particles in the system, and the Hilbert space $\H$ is defined as
	\begin{equation*}
		\H:=\Big\{u\in H^1(\R^3,\mathbb C):\int_{\R^3}V(x)|u(x)|^2dx<\infty\Big\}
	\end{equation*}
together with the norm
\begin{equation*}
\|u\|_\H:=\Big\{\int_{\R^3}\Big[|\n u|^2+(V(x)+1)|u|^2\Big]dx\Big\}^{\f 12}.
\end{equation*}
In particular, the notation $\|\|_{\H}$ depends on the type of $V(x)$.
	

Following \cite{FGL21,GLN21,Lewin11}, we now denote $\g$ to be a non-negative self-adjoint operator on $L^2(\R^3,\mathbb C)$ satisfying $\operatorname{Tr}(\g)=N$, so that $\g$ is compact. By the spectral theorem, the operator $\g$ can be further diagonalized as
\begin{equation*}  \gamma=\displaystyle\sum_{i=1}^\infty n_i|u_i\rangle\langle u_i|,\ \ \langle u_i,u_j\rangle_{L^2(\R^3,\mathbb C)}=\delta_{ij}\ \ \hbox{for}\ \ i,j=1,2,\cdots,
\end{equation*}
where $n_i\geq 0$ and
$\sum_{i=1}^\infty n_i=N$. Moreover, we have
	\begin{equation*}
		\g\varphi(x)=\sum_{i=1}^\infty n_i\langle u_i,\varphi\rangle_{L^2(\R^3,\mathbb C)}u_i(x),\ \  \forall\varphi(x)\in L^2(\R^3,\mathbb C),
	\end{equation*}
and the corresponding density of the operator $\g$ is defined by
\begin{equation}\label{density}
\rho_\g(x):=\g(x,x),
\end{equation}
where $\g(x,y)=\sum_{i=1}^\infty n_iu_i(x)\bar u_i(y)$ denotes the integral kernel of $\g$.  The kinetic energy of the operator $\g$ is then defined as
\begin{equation*}
		\operatorname{Tr}(-\Delta \g):=\sum_{i\geq1}\int_{\R^3}n_i|\n u_i(x)|^2dx.
\end{equation*}
By the similar arguments of \cite[Appendix A]{CG24} and \cite[Lemma 11]{GLN21}, one can then reduce equivalently the problem \eqref{1.2} to the following form:
\begin{equation}\label{infN}
\begin{split}
I_a(N)=\inf \Big\{&E_a(\gamma):\ \gamma=\displaystyle\sum_{i=1}^N|u_i\rangle\langle u_i|,\ \, u_i\in \H,\\&\qquad\qquad\langle u_i,u_j\rangle_{L^2(\R^3,\mathbb C)}=\delta_{ij},\ \,1\leq i,j \leq N \Big\},\ \,a>0,\ \,N\in\N^+,
\end{split}
\end{equation}
where the energy functional $E_a(\gamma)$ is given by
\begin{equation}\label{functional}
E_a(\gamma):=\operatorname{Tr}\big(-\Delta+V(x)\big)\gamma-a\int_{\R^3}\rho_\gamma^{\f53}dx.
\end{equation}

It turns out that the analysis of $I_a(N)$ depends on the following finite-rank Lieb-Thirring inequality established in \cite{FGL21}:
\begin{equation}\label{a*}
a^*_N:=\inf\Big\{\f{\|\gamma\|^{\f23}\operatorname{Tr}(-\Delta\gamma)}{\int_{\R^3}\rho_\gamma^{\f53}dx}:\,\g=\displaystyle\sum_{i=1}^{N}n_i|u_i\rangle\langle u_i|\neq0,\, u_i\in H^1(\R^3,\,\mathbb C),\,n_i\geq0\Big\}.
	\end{equation}
Here $\rho_\g=\sum_{i=1}^Nn_i|u_i|^2$ is defined as in \eqref{density}, and $\|\g\|$ denotes the norm of the operator $\g$. 	 It follows from \cite [Theorem 6]{FGL21} that for any $N\in\N^+$, the best constant $a_N^*$ is attained, and any optimizer $\g^{(N)}$ of $a_N^*$ can be written as
	\begin{equation}\label{gN}
		\gamma^{(N)}=\| \gamma ^{(N)}\| \sum_{i=1}^{R_N}|Q_i\rangle\langle Q_i|,\quad\langle Q_i,Q_j\rangle_{L^2(\R^3)}=\delta_{ij}\ \text{ for }\ 1\leq i,\,j\leq R_N,
	\end{equation}
where  $R_N\in[1,N]$ is a positive integer, and the orthonormal system $(Q_1,\cdots,Q_{R_N})$ solves the following fermionic nonlinear Schr\"{o}dinger system
	\begin{equation}\label{oH}
		\Big[-\Delta-\f53a_N^*\Big(\ds\sum_{j=1}^{R_N}Q_j^2\Big)^{\f23}\Big]Q_i=\hat\mu_iQ_i\ \text{ in }\ \R^3,\ \ i=1,\cdots,R_N.
	\end{equation}
Here $\hat\mu_1<\hat\mu_2\leq\cdots\leq\hat\mu_{R_N}<0$ are the  $R_N$ first negative eigenvalues (counted with multiplicity) of the operator
	\begin{equation*}
		 H_{\g^{(N)}}:=-\Delta-\f53a_N^*\Big(\ds\sum_{j=1}^{R_N}Q_j^2\Big)^{\f23}\ \text{ in }\ L^2(\R^3).
	\end{equation*}
Moreover, note from \cite[Theorem 6]{FGL21} that
	there exists an infinite sequence of integers $N_1=1< N_2=2< N_3<\cdots$  such that
	\begin{equation}\label{in}
		a_{N_{m}-1}^*>a_{N_m}^*,\quad m=2,3,4,\cdots,
	\end{equation}
and any optimizer $\g^{(N_m)}$ of $a_{N_m}^*$ is full-rank, $i.e.$, Rank$(\g^{(N_m)})=N_m$.

The application of  the finite-rank Lieb-Thirring inequality was studied recently in  \cite{GLN21}, where the authors mainly analyzed the mass-subcritical case of $I_a(N)$ by employing a similar version of (\ref{a*}).
Stimulated by this fact and the references \cite{GLN21,Lund,Lund1}, as an application of the finite-rank Lieb-Thirring  inequality established in \cite{FGL21}, {\it the main purpose of the present paper is to apply the minimization problem (\ref{a*}) for analyzing the mass-critical problem $I_a(N)$}. We also remark that one needs to impose a parameter $a>0$ in front of the nonlinear term for  the mass-critical problem $I_a(N)$, which was already pointed out in \cite[pp. 1208]{GLN21}.

Since the energy functional $E_a(\g)$ of \eqref{functional} is invariant under the unitary transformation, if $\gamma=\displaystyle\sum_{i=1}^N|u_i\rangle\langle u_i|$ is a  minimizer of $I_a(N)$,  by the variational argument, then we may assume that after some suitable transformation, the vector function $(u_1,\cdots, u_N)$ satisfies the following fermionic  nonlinear Schr\"odinger system
\begin{equation}\label{NLS}
		H_Vu_i:=	\l[-\Delta+V(x)-\f53a\Big(\sum_{i=1}^N|u_i|^2\Big)^{\f23}\r]u_i=\mu_i u_i\ \ \text{in }\ \R^3,\,a>0,\,i=1,\cdots,N,
	\end{equation}
where $\langle u_i,u_j\rangle_{L^2(\R^3)}=\delta_{ij}$ holds for $i,\,j=1,\cdots,N\in\N^+$,  and $\mu_i$ is a suitable Lagrange multiplier for $i=1,\cdots,N$. We further expect that for any $N\in \mathbb N^+$, the minimizers of $I_a(N)$ are connected with ground states of  \eqref{NLS}, in the following sense that

\begin{Definition}(Ground states)\label{ground}
An  $L^2$-orthonormal system $(u_1,\cdots,u_N)\in\H \times \H \cdots \times\H$  is called a ground state of \eqref{NLS}, if it solves \eqref{NLS}, where
$$\mu_1<\mu_2\leq\mu_3\leq\cdots\leq\mu_N$$
are the $N$ first eigenvalues (counted with multiplicity) of the operator $H_V$ in $L^2(\R^3)$.
	\end{Definition}

By employing the minimization problem (\ref{a*}), the first  result of this paper is to study the following existence and nonexistence of minimizers for  $I_a(N)$ defined by \eqref{infN}.
	
\begin{Theorem}\label{groundstate}
Suppose the potential $V(x)$ satisfies
\begin{equation*}
 0= \min_{x\in\R^3} V(x)\leq V(x) \in C(\R^3)\text{ \ and \ }\ds\lim_{|x|\to\infty}V(x)=+\infty,
\end{equation*}
and let $a_N^*>0$ be given by \eqref{a*}, where $N\in\N^+$ satisfies that $a_N^*$ admits a full-rank optimizer.
Then we have
\begin{enumerate}
\item[(1)] If $0<a<a_N^*$, then there exists  at least one minimizer $\g=\displaystyle\sum_{i=1}^{N}|u_i\rangle\langle u_i|$ of $I_a(N)$, where $\left(u_1,\dots,u_N\right)$  is a ground state of \eqref{NLS}.

\item[(2)] If $a\geq a_N^*$, then there is no minimizer of $I_a(N)$. 	Moreover, $\ds\lim_{a\nearrow a_N^*}I_{a}(N)=I_{a_N^*}(N)=0$, and $I_a(N)=-\infty$ for $a>a_N^*.$
\end{enumerate}
\end{Theorem}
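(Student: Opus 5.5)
For part (1), the starting point is the finite-rank Lieb--Thirring inequality \eqref{a*} applied to $\g=\sum_{i=1}^N|u_i\rangle\langle u_i|$. Since the $u_i$ are orthonormal, $\|\g\|=1$, and \eqref{a*} gives $\int_{\R^3}\rho_\g^{5/3}dx\le \frac{1}{a_N^*}\operatorname{Tr}(-\Delta\g)$. Hence
\[
E_a(\g)\ge \Big(1-\frac{a}{a_N^*}\Big)\operatorname{Tr}(-\Delta\g)+\operatorname{Tr}(V\g)\ge 0 .
\]
For $0<a<a_N^*$ this bounds $\sum_i\|u_i\|_{\H}^2$ uniformly along any minimizing sequence. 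Because $V\to\infty$, the embedding $\H\hookrightarrow L^q(\R^3)$ is compact for $2\le q<6$. Passing to a subsequence, $u_i^k\to u_i$ strongly in $L^2$ and in $L^{10/3}$. Strong $L^2$ convergence preserves orthonormality, strong $L^{10/3}$ convergence gives $\int\rho^{5/3}$ convergence, and weak lower semicontinuity handles the quadratic part. Therefore the limit is a minimizer. Standard Lagrange-multiplier theory, after diagonalising the Hermitian multiplier matrix by a unitary rotation (which leaves $E_a$ invariant), gives \eqref{NLS}.

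To show that $(u_1,\dots,u_N)$ is a ground state, I would argue that $\mu_1,\dots,\mu_N$ are the $N$ lowest eigenvalues of $H_V$. The operator $H_V$ has compact resolvent, since $V\to\infty$ and the nonlinear term is relatively form bounded. Suppose some eigenvector $w\perp u_1,\dots,u_N$ has eigenvalue $\mu<\mu_j$ for some $j$. Replace $u_j$ by $\cos\theta\,u_j+\sin\theta\,w$. By convexity of $t\mapsto t^{5/3}$, $-\int\rho^{5/3}$ is concave in $\rho$, so it lies below its tangent. The energy therefore changes by at most the linearised change $\sin^2\theta(\mu-\mu_j)<0$, which contradicts minimality. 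The same argument gives $\mu_1<\mu_2$: $H_V$ is a Schr\"odinger operator with a continuous enough potential, so its first eigenvalue is simple. This is the standard Aufbau (no-unfilled-shell) argument of \cite{GLN21}.

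For part (2), the lower bound above shows $I_a(N)\ge 0$ for $a\le a_N^*$. For the upper bound I would use trial states built from the full-rank optimizer. Take $Q_i$ from \eqref{gN}, which has $R_N=N$ because the optimizer is full rank, and note that $\|\g^{(N)}\|\sum_i|Q_i\rangle\langle Q_i|$ being optimal means $\sum_i|Q_i\rangle\langle Q_i|$ is optimal too. Pick $x_0$ with $V(x_0)=0$ and a cutoff $\varphi$. Set $u_i^\tau=\tau^{3/2}\varphi(x-x_0)Q_i(\tau(x-x_0))$, Gram--Schmidt orthonormalise, and let $\tau\to\infty$. The exponential decay of $Q_i$ (they solve \eqref{oH} with negative eigenvalues) makes the cutoff and orthonormalisation errors $O(e^{-c\tau})$. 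The result is
\[
E_a(\g_\tau)=\tau^2\Big(1-\frac{a}{a_N^*}\Big)\operatorname{Tr}(-\Delta\g^{(N)}_1)+\sum_i\int V(x_0+x/\tau)Q_i^2\,dx+o(1).
\]
At $a=a_N^*$ the first term vanishes, and the potential term tends to $V(x_0)=0$ by continuity, so $I_{a_N^*}(N)=0$. For $a>a_N^*$ the first term tends to $-\infty$. Continuity as $a\nearrow a_N^*$ follows because $I_a$ is nonincreasing in $a$ and, for each fixed $\tau$, $E_a(\g_\tau)$ is continuous in $a$. This gives $0\le\lim I_a\le E_{a_N^*}(\g_\tau)+o(1)\to0$. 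For $a>a_N^*$ the infimum is $-\infty$, so no minimizer exists.

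The main obstacle is nonexistence at $a=a_N^*$. Suppose $\g$ were a minimizer. Then both inequalities in the lower bound would be equalities, so $\int V\rho_\g=0$ and $\g$ optimises \eqref{a*}. The first condition forces $\rho_\g$ to be supported in $\{V=0\}$. But an optimizer consists of solutions of \eqref{oH}, which are real-analytic or at least satisfy unique continuation, so $\rho_\g>0$ almost everywhere. Hence $\int V\rho_\g>0$, since $V$ is not identically zero (it tends to infinity), a contradiction. The care needed here is confirming that every optimizer with $\|\g\|=1$ has the structure \eqref{gN}, which I take from \cite[Theorem 6]{FGL21}.
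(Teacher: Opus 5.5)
Your proposal is correct and follows the paper's proof essentially step for step. It uses the Lieb--Thirring lower bound plus the compact embedding $\mathcal{H}\hookrightarrow L^q$ for existence, the convexity inequality $E_a(\gamma')\le E_a(\gamma)+\operatorname{Tr}H_V(\gamma'-\gamma)$ for the ground-state property, and cut-off, orthonormalised rescalings of a full-rank optimizer for $I_{a_N^*}(N)=0$, $I_a(N)=-\infty$ and the limit as $a\nearrow a_N^*$. The differences are cosmetic: you rotate a single orbital toward a lower eigenvector instead of minimising $\operatorname{Tr}H_V\gamma'$ over $\mathcal{K}_N$, and you exclude a minimizer at $a=a_N^*$ by unique continuation rather than by strict positivity of the first eigenfunction; the weak form of unique continuation suffices, since $\rho_\gamma$ would vanish on the nonempty open set $\{V>0\}$, and your potential term should carry the cutoff factor $\varphi^2$.
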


\begin{Remark}
As for Theorem \ref{groundstate}, the restriction that $a_N^*$ admits a full-rank optimizer, which holds at least for $N=1, 2$ in view of \cite[Theorem 6]{FGL21}, is only used to ensure the nonexistence of Theorem \ref{groundstate} (2).
\end{Remark}

Since $\lim_{a\nearrow a_N^*}I_{a}(N)=I_{a_N^*}(N)=0$, the proof of Theorem \ref{groundstate} yields that $$\int_{\R^3}V(x)\rho_{\gamma_a}dx\to0=\inf_{x\in\R^3}V(x)\ \ \mbox{as}\ \ a\nearrow a_N^*.$$
Therefore, the limiting behavior of the minimizers for $I_a(N)$ as $a\nearrow a_N^*$ is influenced by the local behavior of $V(x)$ near its minimum points. When the trap $V(x)$ is a Coulomb potential, the mass concentration behavior of minimizers for $I_a(N)$ as $a\nearrow a_N^*$ was first studied in \cite{Chen1}, whose analysis depends strongly on the trap $V(x)$  having a finite number of minimum points. It is thus natural to ask what happens if $V(x)$ has infinitely many minimum points. Therefore, we next analyze mainly the limiting behavior of the minimizers for $I_a(N)$ for the case where the trap $V(x)$ is a ring-shaped potential, $i.e.,$ $V(x)$ has infinitely many minimum points. We remark that the ring-shaped potentials were considered  in recent physical experiments of Fermi gases, see \cite{rsp,PRA, PRL, PRX} and the references therein.

In view of above facts, we now consider the following ring-shaped potential in terms of the radial frequency $\omega_1>0$ and the vertical frequency $\omega_2>0$:
\begin{equation}\label{eV}
V(x)=\omega_1\left(r-A\right)^2+\omega_2x_3^2, \ \, A>0,
\end{equation}
where $x=(x_1,x_2,x_3)\in\R^3$ and $r=\sqrt{x_1^2+x_2^2}$. Applying the minimization problem (\ref{a*}), under the above assumption (\ref{eV}), we shall prove the following limiting behavior of the minimizers for $I_a(N)$ as $a\nearrow a_N^*$:
	
\begin{Theorem}\label{1.2T}
Let $V(x)\ge 0$ be given by \eqref{eV}, and suppose $N\in\N^+$ is chosen  so that any optimizer of $a_N^*$ is full-rank. Assume $\g_{{a}}=\sum_{i=1}^N| u^a_i\rangle\langle u^a_i|$ is a minimizer of $I_{a}(N)$, where $u^a_i$ satisfies \eqref{NLS} for $i=1,\cdots,N$. Then for any sequence $\{\g_{a_n}\}$ satisfying $a_n\nearrow a_N^*$ as $n\to\infty$, there exists a subsequence, still denoted by $\{\g_{a_n}\}$, of  $\{\g_{a_n}\}$ such that for  $i=1,\cdots,N$,
\begin{equation}\label{A-1}
\begin{aligned}
 w_i^{a_n}(x):&=(a_N^*-a_n)^{\f38}u_i^{a_n}\big((a_N^*-a_n)^{\f14} x+x_n\big)\\
&\to  w_i(x)\ \text{ strongly in}\ H^1(\R^3)\cap L^\infty(\R^3)\ \ \hbox{as}\ \ n\to\infty,
\end{aligned}
\end{equation}
where $\g:=\sum_{i=1}^N|   w_i\rangle\langle   w_i|$ is an optimizer of $a_N^*$ defined by \eqref{a*}, and the global maximum point $x_n=(p_n,z_n)\in\R^2\times\R$ of the density $\rho_{\g_{a_n}}:=\sum_{i=1}^N|u_i^{a_n}|^2$ satisfies
\begin{equation*}
\ds\lim_{n\to\infty}x_n=\ds\lim_{n\to\infty}(p_n,z_n)=(p_0,0)\in\R^2\times\R, \text{ \ and\ } |p_0|=A>0  \text{\ is as in\ } (\ref{eV}).
\end{equation*}
Moreover, there exist constants $C_0$ and $C_1$ such that $x_n=(p_n,z_n)$ satisfies
\begin{equation}\label{C_0}
\lim_{n\to\infty}\f{|p_n|-|p_0|}{(a_N^*-a_n)^{\f14}}=C_0\ \text{ and }\ \lim_{n\to\infty}\f{z_n }{(a_N^*-a_n)^{\f14}}=C_1.
\end{equation}
Furthermore, the energy $I_{a_n}(N)$ satisfies
\begin{equation}\label{1-1}
\begin{split}
\lim_{n\to\infty}\frac{I_{a_n}(N)}{(a_N^*-a_n)^{\f12}}&=\int_{\R^3}\rho_{\g}^{\f 53}dx+\int_{\R^3}\Big[\omega_1\Big(\f{(x_1,x_2)\cdot p_0}{A}+C_0\Big)^2\\
& \,\qquad\qquad\qquad\qquad +\omega_2(x_3+C_1)^2\Big]\rho_{\g}dx,
\end{split}
\end{equation}
where $\rho_\g:=\sum_{i=1}^N|w_i|^2$, and $A>0$,  $\omega_1>0$ and $\omega_2>0$ are as in \eqref{eV}.

	\end{Theorem}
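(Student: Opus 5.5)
The plan follows the standard blow-up scheme for mass-critical problems, in four steps: a sharp energy upper bound by a trial state, a matching lower bound from the Lieb--Thirring inequality, compactness of the rescaled minimizers, and finally identification of the concentration point and of the limiting energy.

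\textbf{Upper bound.} Take a full-rank optimizer $\g^{(N)}=\|\g^{(N)}\|\sum_{i=1}^N|Q_i\rangle\langle Q_i|$ of \eqref{a*}, normalized by scaling so that $\operatorname{Tr}(-\Delta\g)=\int\rho_\g^{5/3}=1$. For $\tau>0$, $x_0=(p_0,0)$ with $|p_0|=A$, and shift parameters $(c_0,c_1)$, use the trial functions
\[
u_i^\tau(x)=\tau^{3/2}Q_i\big(\tau(x-x_\tau)\big),\qquad x_\tau=x_0+\tau^{-1}\big(c_0p_0/A,\,c_1\big).
\]
This gives
\[
E_a\le \tau^2\,(a_N^*-a)\int\rho^{5/3}+\int V(x_\tau+y/\tau)\,\rho(y)\,dy .
\]
Near the ring we have $V(x_0+h)=\omega_1(h\cdot p_0/A)^2+\omega_2h_3^2+O(|h|^3)$, so the potential term equals
\[
\tau^{-2}\int\Big[\omega_1\big(\tfrac{y'\cdot p_0}{A}+c_0\big)^2+\omega_2(y_3+c_1)^2\Big]\rho\,dy+o(\tau^{-2}).
\]
Optimizing in $\tau\sim(a_N^*-a)^{-1/4}$ yields $I_a(N)\le C(a_N^*-a)^{1/2}$, with an explicit constant depending on the optimizer and on $(c_0,c_1)$. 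After fixing the normalization via the scaling in \eqref{A-1}, this is exactly the right-hand side of \eqref{1-1}, minimized over optimizers and shifts.

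\textbf{Lower bound and compactness.} By \eqref{a*} with $\|\g\|\le1$, we have $E_a(\g_a)\ge (1-a/a_N^*)\operatorname{Tr}(-\Delta\g_a)+\int V\rho_{\g_a}$. Combined with the upper bound, this gives $\operatorname{Tr}(-\Delta\g_a)\le C(a_N^*-a)^{-1/2}$ and $\int V\rho\le C(a_N^*-a)^{1/2}$. To obtain the matching lower bound on the kinetic energy, suppose instead that $\epsilon_a^{-2}:=\operatorname{Tr}(-\Delta\g_a)$ stays much smaller than $(a_N^*-a)^{-1/2}$. Then $\int V\rho\ge C\epsilon_a^{2}$ (a Heisenberg-type uncertainty bound in the two transverse directions of the ring) contradicts the upper bound. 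Hence $\epsilon_a\sim(a_N^*-a)^{1/4}$.

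Rescale as in \eqref{A-1} around the maximum point $x_n$ of the density. Kinetic energy is then bounded, and
\[
\int(\rho_{w})^{5/3}\to \operatorname{Tr}(-\Delta \g_w)/a_N^*.
\]
The rescaled system satisfies the Euler--Lagrange equations \eqref{NLS} with multipliers $\epsilon^2\mu_i$, which are bounded and negative in the limit. A De Giorgi--Nash--Moser / maximum principle argument at the maximum point gives $\rho_{w^n}(0)\ge c>0$, which rules out vanishing. Dichotomy is excluded because the limit must saturate the finite-rank Lieb--Thirring inequality, and the strict inequalities \eqref{in} / full-rank assumption forbid mass splitting: any loss of mass would lead to an optimizer of lower rank, or to a fraction of $\sum n_i$ strictly below $N$, which is incompatible with full rank. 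This yields strong $L^2$ convergence and hence strong $H^1$ convergence, with limit $\g=\sum|w_i\rangle\langle w_i|$ an optimizer of $a_N^*$. The $L^\infty$ convergence follows from elliptic estimates and exponential decay. \emph{This compactness step, showing that all $N$ orbitals concentrate at one point with no loss of mass or rank, is the main obstacle.}

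\textbf{Location of the concentration point and energy asymptotics.} From $\int V\rho\to0$ and the nonvanishing limit profile, we get $V(x_n)\to0$, so along a subsequence $x_n\to(p_0,0)$ with $|p_0|=A$. Writing
\[
\int V\rho_{\g_{a_n}}=\int V(x_n+\epsilon_n y)\,\rho_{w^n}(y)\,dy,
\]
if $(|p_n|-A)/\epsilon_n$ or $z_n/\epsilon_n$ were unbounded, Fatou's lemma would make $\epsilon_n^{-2}\int V\rho$ diverge, contradicting the upper bound. Passing to a further subsequence therefore gives the limits $C_0,C_1$ in \eqref{C_0}. Expanding $V$ to second order with these shifts, and using
\[
\epsilon_n^{-2}\big(1-\tfrac{a_n}{a_N^*}\big)\operatorname{Tr}(-\Delta\g_{a_n})\to \int\rho_\g^{5/3}
\]
(with the specific scaling $\epsilon_n=(a_N^*-a_n)^{1/4}$, using $\operatorname{Tr}(-\Delta\g)=a_N^*\int\rho_\g^{5/3}$), gives the liminf of $I_{a_n}(N)/(a_N^*-a_n)^{1/2}$ bounded below by the right-hand side of \eqref{1-1}. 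The limsup matches by the trial state built from this same $\g$ and the same shifts, which proves \eqref{1-1}.
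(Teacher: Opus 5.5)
Your last step matches the paper's. That step bounds $(|p_n|-A)/\varepsilon_n$ and $z_n/\varepsilon_n$ by the divergence argument, takes the liminf from the rescaled energy, and takes the limsup from a trial state built on the same $\g$ and shifts.

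\textbf{Different route for the lower bound.} For $I_a(N)\ge C(a_N^*-a)^{1/2}$ your route is much shorter. The paper needs a rough bound (Lemma~\ref{ree}), a blow-up at the kinetic scale (Lemma~\ref{3.3L}) and a geometric lemma on the ring (Lemma~\ref{Vll}) to reach $\int V\rho_{\g_a}\ge C\epsilon_a^2$, and only along subsequences. The uncertainty principle gives this for every admissible $\g$, and one direction suffices. Since $V\ge\omega_2x_3^2$ and $1=\|u_i\|_2^2\le 2\|x_3u_i\|_2\|\partial_{x_3}u_i\|_2$, you get $\int V\rho_\g\ge \omega_2N^2/(4\operatorname{Tr}(-\Delta\g))$. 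Hence $I_a(N)\ge N\sqrt{\omega_2/a_N^*}\,(a_N^*-a)^{1/2}$ for all $0<a<a_N^*$, with no compactness and no full-rank hypothesis. The radial direction is not needed.

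\textbf{Gap: excluding vanishing.} You recentre at the maximum point $x_n$ and use that the rescaled multipliers $\varepsilon_n^2\mu_i^{a_n}$ are ``negative in the limit''. Before compactness you only know they are bounded and that their sum tends to $-\frac23a_N^*\lim\int\rho_{w^n}^{5/3}<0$. That gives $\varepsilon_n^2\mu_1^{a_n}\le -c$, but nothing about $\mu_N^{a_n}$. The maximum-principle bound at $x_n$ comes from $-\Delta\rho\le\frac{10a}{3}\rho^{5/3}+2\mu_N\rho$, so it needs exactly $\varepsilon_n^2\mu_N^{a_n}\le -c$.

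In the paper, negativity of every $\lim\varepsilon_n^2\mu_i^{a_n}$ is obtained only after the limit is known to be an optimizer of $a_N^*$, whose first $N$ eigenvalues are negative by \eqref{oH}. As written, your argument is therefore circular. Use the paper's order instead:
(i) get nonvanishing around some centres $y_n$ from $\int\rho_{w^n}^{5/3}\ge c$, the $H^1$ bound and Lions' lemma;
(ii) prove compactness there;
(iii) deduce $\lim\varepsilon_n^2\mu_i^{a_n}<0$ and uniform exponential decay;
(iv) only then apply the maximum principle at $x_n$ to show $x_n/\varepsilon_n-y_n$ stays bounded (Lemma~\ref{lem4.2}).

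\textbf{Gap: saturation is asserted, not proved.} ``The limit must saturate the finite-rank Lieb--Thirring inequality'' is the crux of compactness. The paper excludes splitting of $\int\rho^{5/3}$ via the dual constant $L_N^*$, the splitting lemma \cite[Lemma 17]{FGL24} and $L_{2N}^*>L_N^*$, then uses full rank to recover the $L^2$ mass.

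Your primal mechanism also works once you supply the argument. Let $\g_n$ be the rescaled operators, $\bar\g\neq0$ their weak limit, and $\chi_n^2+\eta_n^2=1$ an IMS partition at a slowly growing radius.
\begin{itemize}
\item Apply \eqref{a*} to $\eta_n\g_n\eta_n$ (rank $\le N$, norm $\le1$) and use $\operatorname{Tr}(-\Delta\g_n)-a_N^*\int\rho_{\g_n}^{5/3}\to0$. This gives $\operatorname{Tr}(-\Delta\chi_n\g_n\chi_n)\le a_N^*\int\rho_{\bar\g}^{5/3}+o(1)$.
\item Lower semicontinuity then yields $\operatorname{Tr}(-\Delta\bar\g)\le a_N^*\int\rho_{\bar\g}^{5/3}$. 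With \eqref{a*} this forces $\|\bar\g\|=1$ and $\bar\g$ optimal.
\item Full rank and \eqref{gN} give $\operatorname{Tr}\bar\g=N$, hence strong $L^2$ and then $H^1$ convergence.
\end{itemize}
This bypasses the dual inequality and uses only the full-rank hypothesis, which the paper needs anyway.
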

	

\begin{Remark}
The $L^\infty$-convergence of \eqref{A-1} implies that
$$\g_{a_n}(x,y)\approx (a_N^*-a_n)^{-\f34}\g\Big(\f{x-x_0}{(a_N^*-a_n)^{\f14}}, \f{y-x_0}{(a_N^*-a_n)^{\f14}}\Big)\ \ \hbox{as}\ \ a_n\nearrow a_N^*,$$
where $\g(x,y)=\sum_{i=1}^Nw_i(x)w_i(y)$ is the integral kernel of the optimizer  $\g$ for $a_N^*$, and $x_0:=(p_0,0)\in\R^2\times \R$ is a global minimum point of the ring-shaped potential $V(x)$. This further yields that the mass of the minimizers for $I_{a_n}(N)$ concentrates at a global minimum point of $V(x)$ as $a_n\nearrow a_N^*.$
\end{Remark}

In the following, we sketch the proof of Theorem \ref{1.2T} by three steps. The first step of proving Theorem \ref{1.2T} is to establish the delicate estimates of the energy $I_a(N)$ as $a\nearrow a_N^*$. Once the ring-shaped potential \eqref{eV} is considered, the estimates of $I_a(N)$ as $a\nearrow a_N^*$ are however involved with the extra  difficulty. Actually, if the potential $V(x)$ has finite minimum points (such as the Coulomb potential), then it is enough (cf. \cite{CG24,Chen1}) to consider the local region containing a single minimum point of $V(x)$ to derive a refined estimate of the potential energy $\int_{\R^3}V(x)\rho_{\gamma_a} dx$. It further yields the refined energy estimates of the system. However, this idea does not work well for the case of ring-shaped potentials. To overcome this difficulty,  we first establish in Lemma \ref{ree}  the following rough  estimates of $I_a(N)$:
\begin{equation}\label{reee}
C_1(a_N^*-a)^{\f 35}\leq I_a(N)\leq C_2(a_N^*-a)^{\f 12}\ \ \text{as }\ a\nearrow a_N^*,
\end{equation}
where $C_1$ and $C_2$ are positive constants. Based on \eqref{reee}, the optimal lower estimate of $I_a(N)$  as $a\nearrow a_N^*$ is further derived in Theorem \ref{ee}.

As the second step of proving Theorem \ref{1.2T}, we shall follow the first step to derive the $H^1$-convergence of \eqref{A-1}. Towards this purpose, we need the following finite-rank Lieb-Thirring inequality
\begin{equation}\label{frlt}
		L_N^*{\int_{\R^3} A^{\f52}(x)dx}\geq{\sum_{j=1}^N\big|\lambda_j\big(-\Delta-A(x)\big)\big|},\,\  0\leq A(x)\in L^{\f52}(\R^3)\backslash\{0\},
\end{equation}
where the best constant $L_N^*\in(0,+\infty)$ can be achieved (cf. \cite[Corollary 2]{FGL24}). Here $\lambda_i\big(-\Delta-A(x)\big)\leq 0$ denotes the $i$-th  min-max level of $H_A:=-\Delta-A(x)$ in $L^2(\R^3)$, which equals to the $i$-th negative eigenvalue (counted with multiplicity) of $H_A$ in $L^2(\R^3)$ if it exists, and vanishes otherwise. Applying Theorem \ref{ee} and  \eqref{frlt}, we shall obtain the $L^{\f53}$-convergence of the density $\rho_{\widetilde\g_{a_n}}:=\sum_{i=1}^N|w_i^{a_n}|^2$, where $w_i^{a_n}$ is as in \eqref{A-1}. Following this convergence, the $H^1$-convergence of \eqref{A-1} is then derived in Lemma \ref{4.1L}.

The third step of proving Theorem \ref{1.2T} is to  derive mainly the $L^\infty$-uniform convergence of \eqref{A-1}, which requires that $\mu_i^{a_n}<0$ holds for all $ i=1,\cdots,N$. Here $\mu_i^{a_n}$ is the  $i$-th eigenvalue    of  the operator $H_V:=-\Delta+V(x)-\frac 53 a_n\rho^{\frac 23}_{\gamma_{a_n}}$ in $L^2(\R^3)$ for $ i=1,\cdots,N$,  and $\gamma_{a_n}$ is a minimizer of $I_{a_n}(N)$. When $V(x)$ is a ring-shaped potential,  the operator $H_V$ has no essential spectrum, due to the fact that $0\leq V(x)\to\infty$ as $|x|\to\infty$. Hence,  the existing methods of \cite{CG24,Chen1,GLN21} cannot yield $\mu_i^{a_n}<0$ as $a_n\nearrow a_N^*$, where  $ i=1,\cdots,N$.  To overcome this difficulty,  we shall use the energy estimates of Theorem 3.1 to derive that the sequence $\{\sum_{i=1}^N(a_N^*-a_n)^{\frac12}\mu_i^{a_n}\}$ is bounded uniformly from above, and  the sequence $\{(a_N^*-a_n)^{\frac12}\mu_i^{a_n}\}$ is bounded uniformly from below for $i=1,\cdots,N$. By analyzing the properties of optimizers for $a_N^*$, this further helps us to derive that $\mu_i^{a_n}<0$ holds  for all $ i=1,\cdots,N$ as $a_n\nearrow a_N^*$. The $L^\infty$-uniform convergence of \eqref{A-1} is then obtained in Lemma \ref{lem4.2} by applying the exponential decay of minimizers.

This paper is organized as follows. In Section \ref{2}, we shall prove Theorem \ref{groundstate} on the existence and nonexistence of minimizers for $I_a(N)$. Section \ref{3} is mainly concerned with the energy estimates of $I_a(N)$ as $a\nearrow a_N^*$, based on which the proof of Theorem \ref{1.2T} is finally completed in Section \ref{4}.

\section{Existence of Minimizers for $I_a(N)$}\label{2}
In this section, we mainly establish Theorem \ref{groundstate} on the existence   and nonexistence of minimizers for $I_a(N)$, where $N\in\N^+$ is chosen such that $a_N^*$ defined by \eqref{a*} admits a full-rank optimizer.

Towards this purpose, we  first introduce the following compactness lemma (cf. \cite{Adams}, \cite[Theorem XIII.67]{Sim4}).

	\begin{Lemma}\label{ce}
		Suppose $0\leq V(x)\in L^\infty_{loc}(\R^3)$  satisfies $\lim_{|x|\to\infty}V(x)=\infty$. Then the embedding $\H\hookrightarrow L^q(\R^3)$ is compact for $2\leq q<6$.
	\end{Lemma}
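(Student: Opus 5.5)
The plan is to first prove compactness on bounded sets and then identify the resulting limit, as in the classical argument for confining potentials (cf. \cite[Theorem XIII.67]{Sim4}).

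\textbf{Step 1: reduction to sequences.} Let $\{u_n\}$ be bounded in $\H$, say $\|u_n\|_\H\le C$. Since $\H\hookrightarrow H^1(\R^3)$ continuously, $\{u_n\}$ is bounded in $H^1(\R^3)$. Passing to a subsequence, $u_n\rightharpoonup u$ weakly in $H^1(\R^3)$. By weak lower semicontinuity (Fatou applied along an a.e.-convergent subsequence), $u\in\H$. It then suffices to prove $u_n\to u$ in $L^2(\R^3)$ and to upgrade this to $L^q$.

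\textbf{Step 2: $L^2$-convergence, split into a ball and its complement.}
\begin{itemize}
\item \emph{Outside a large ball (the key step).} Given $\varepsilon>0$, use $V\to\infty$ to choose $R$ with $V(x)\ge 1/\varepsilon$ for $|x|\ge R$. Then
\[
\int_{|x|\ge R}|u_n-u|^2dx\le \varepsilon\int_{\R^3}V|u_n-u|^2dx\le 4C^2\varepsilon
\]
uniformly in $n$.
\item \emph{Inside the ball.} On $B_R$ the Rellich--Kondrachov theorem gives $H^1(B_R)\hookrightarrow L^2(B_R)$ compactly. Hence $u_n\to u$ in $L^2(B_R)$; the weak limit identifies the strong limit.
\end{itemize}
Combining the two, $\limsup_n\|u_n-u\|_{L^2}^2\le 4C^2\varepsilon$. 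Since $\varepsilon$ is arbitrary, $u_n\to u$ in $L^2(\R^3)$.

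\textbf{Step 3: from $L^2$ to $L^q$, $2<q<6$.} Interpolate with the uniform $L^6$ bound supplied by Sobolev's inequality $\|v\|_{L^6}\le S\|\nabla v\|_{L^2}$:
\[
\|u_n-u\|_{L^q}\le \|u_n-u\|_{L^2}^{\theta}\,\|u_n-u\|_{L^6}^{1-\theta},\qquad \frac1q=\frac\theta2+\frac{1-\theta}6,\quad \theta\in(0,1].
\]
The $L^6$ factor is bounded, so the right-hand side tends to zero.

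The condition $V\in L^\infty_{loc}$ only guarantees that $\H$ is well defined and contains $C_c^\infty$; it plays no role in the compactness. The one genuinely nontrivial ingredient is the uniform tail control in Step 2, which follows directly from the coercivity $V\to\infty$. The endpoint $q=6$ is excluded because no interpolation gain is available there.
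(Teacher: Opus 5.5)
Your proof is correct. The paper gives no argument of its own and only cites Adams and Reed--Simon, Theorem XIII.67 (compact resolvent of $-\Delta+V$ via Rellich's criterion, i.e.\ the $L^2$ case). Your tail estimate from $V\to\infty$, combined with Rellich--Kondrachov on balls, is the standard proof behind that citation, and your interpolation against the Sobolev $L^6$ bound supplies the range $2<q<6$ that the cited $L^2$ result leaves implicit.
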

We next use  Lemma \ref{ce} and \eqref{a*} to complete the proof of Theorem \ref{groundstate}.

\vspace{5pt}

\noindent{\bf Proof of Theorem 1.1.}
(1). For any fixed $N\in\N^+$, let $\gamma=\sum_{i=1}^N|u_i\rangle\langle u_i|$ be an operator satisfying $u_i\in\mathcal{H}$ and $\langle u_i,u_j\rangle_{L^2(\R^3)}=\delta_{ij}$ for $i,j=1,\cdots,N$. Since $V(x)\geq 0$,  we obtain from \eqref{functional} and \eqref{a*} that for  $0<a<a_N^*$,
\begin{equation}\label{bounded}
\begin{split}
E_a(\gamma)&=\operatorname{Tr}\big(-\Delta+V(x)\big)\gamma-a\int_{\R^3}\rho_\gamma^{\f53}dx\\
&\geq \Big(1-\f{a}{a^*_N}\Big)\operatorname{Tr}\left(-\Delta\g\right)+\operatorname{Tr}\big(V(x)\gamma\big)\geq0,
\end{split}
\end{equation}
which implies that $I_a(N)$ is bounded from below.

Let $\{\g_n\}$ be a minimizing sequence of $I_a(N)$, $i.e.$, $\g_n=\sum_{i=1}^N|u_i^{n}\rangle\langle u_i^{n}|$ satisfies $\lim_{n\to\infty}E_a(\g_n)=I_a(N)$  and $\langle u_i^{n},u_j^{n}\rangle_{L^2(\R^3)}=\delta_{ij}$ for $i,\,j=1,\,\cdots,\,N.$ Since \begin{equation*}
\operatorname{Tr}\left(-\Delta\g_n\right)=\sum_{i=1}^N\int_{\R^3}|\nabla u_i^{n}|^2dx\ \,\ \hbox{and}\ \ \operatorname{Tr}\big(V(x)\gamma\big)=\int_{\R^3}V(x)\Big(\sum_{i=1}^N|u_i^{n}|^2\Big)dx,
\end{equation*}
we deduce from  \eqref{bounded} that $\{u_i^{n}\}_n$ is bounded uniformly in $\H$ for $i=1,\cdots,N$. It then follows from Lemma \ref{ce} that there exist  a subsequence, still denoted by $\{u_i^{n}\}$, of $\{u_i^{n}\}$ and a function  $u_i\in\H$ such that for $i=1,\cdots,N$,
\begin{equation*}
u_i^{n}\rightharpoonup u_i \ \text{ weakly in } \ \H,\quad	u_i^{n}\to u_i\  \text{ strongly in }\  L^q(\R^3)\ \ \hbox{as}\ \ n\to\infty,\ \,2\leq q<6.
\end{equation*}
We thus conclude that
\begin{equation}\label{o}
\l<u_i,u_j\r>_{L^2(\R^3)}=\delta_{ij},\ \,i,\,j=1,\,\cdots,\,N,
\end{equation}
and
\begin{equation}\label{nc}
			\rho_{\g_n}:=\sum_{i=1}^N|u_i^{n}|^2\to\rho_\g:=\sum_{i=1}^N|u_i|^2\ \text{ strongly in }\ L^r(\R^3)\ \ \hbox{as}\ \ n\to\infty,\ \,1\leq r<3,
		\end{equation}
where $\g:=\sum_{i=1}^N|u_i\rangle\langle u_i|$. By the  weakly lower semi-continuity,	we deduce from \eqref{o} and \eqref{nc} that
\begin{equation*}
\begin{split}
I_a(N)&=\liminf_{n\to\infty}E_a(\g_n)\\
&\geq \operatorname{Tr}\left(-\Delta\g\right)+\int_{\R^3}V(x)\rho_\g dx-a\int_{\R^3}\rho_\gamma^{\f53}dx\\
&=E_a(\g)\geq I_a(N),
\end{split}
\end{equation*}
which implies that $\g$ is a minimizer of $I_a(N)$. Up to some unitary diagonalization, we assume that $u_i$ satisfies the following system
\begin{equation}\label{lm}
-\Delta u_i+V(x)u_i-\f53 a\rho_{\g}^{\f 23}u_i=\mu_i u_i\ \ \hbox{in}\ \ \R^3,\ \ i=1,\cdots,N,
\end{equation}
where $\mu_i\in\R$ is a suitable Lagrange multiplier. Without loss of generality, we may assume that $\mu_1\leq \mu_2\leq\cdots\leq\mu_N$.

In the following, we  prove that $(u_1,\dots,u_N)$ is a ground state of \eqref{NLS}, i.e.,    $\mu_1<\mu_2\leq\dots\leq\mu_N$ are  the $N$ first eigenvalues of the  operator
\begin{equation}\label{2-4}
H_V:=-\Delta +V(x)-\f{5a}3\rho_\g^{\f23}\ \, \hbox{in}\ \, L^2(\R^3),
\end{equation}
and $u_i$ is the associated eigenfunction of $\mu_i$ for $i=1,\cdots,N$. Actually, we first claim that the minimizer  $\g$ of $I_a(N)$ is also an optimizer of the following problem
\begin{equation}\label{HV}
\inf_{\g'\in\mathcal{K}_N}\,\operatorname{Tr}H_V(\g'),
\end{equation}
where $H_V$ is defined by \eqref{2-4}, and
\begin{equation*}
\mathcal{K}_N:=\big\{\g\in\mathcal{B}\big(L^2(\R^3,\R)\big):\  0\leq\g=\g^*\leq1,\,\operatorname{Tr}(\g)=N,\,\operatorname{Tr}(-\Delta \g)<\infty\big\}.
\end{equation*}
Here $\mathcal B\big(L^2(\R^3,\R)\big)$ denotes the set of bounded linear operators on $L^2(\R^3,\R)$. To prove (\ref{HV}), we note that for any $\g'\in\mathcal{K}_N$,
\begin{equation}\label{infh}
\begin{split}			 E_a(\g')&=E_a(\g)+\operatorname{Tr}H_V(\g'-\g)-a\int_{\R^3}\Big[\rho_{\g'}^{\f53}-\rho_{\g}^{\f53}-\f53\rho_{\g}^{\f23}(\rho_{\g'}-\rho_{\g})\Big]dx\\
&\leq E_a(\g)+\operatorname{Tr}H_V(\g'-\g),
\end{split}
\end{equation}
where the convexity of $x\mapsto x^{\f53}$ is used in the last inequality. The similar arguments of \cite[Lemma 11]{GLN21} yield that the problem \eqref{infN} is equivalent to the following form:
\begin{equation*}
  I_a(N)=\inf_{\g\in\mathcal{K}_N} E_a(\g),
\end{equation*}
where $E_a(\g)$ is defined by \eqref{functional}. Since $\g$ is a minimizer of $I_a(N)$,  we derive from \eqref{infh} that
\begin{equation*}
\operatorname{Tr}H_V(\g)	\leq\operatorname{Tr}H_V(\g'),\ \ \forall\g'\in\mathcal{K}_N,
\end{equation*}
Since $\g\in\mathcal{K}_N $, we deduce that $\g$ is an optimizer of \eqref{HV}, and hence the  above claim holds true.

We next claim that
\begin{equation}\label{continuous}
u_i\in C(\R^3)\ \text{ and }\lim_{|x|\to\infty}u_i (x)=0,\ \ i=1,\cdots,N.
\end{equation}
Indeed, applying  Kato's inequality (cf. \cite[Theorem X.27]{Sim2}), it yields from  \eqref{lm} that
\begin{equation*}
\left(-\Delta -\f53 a\rho_{\g}^{\f 23}\right)|u_i|\leq \mu_i |u_i|\ \,\ \mbox{in} \ \, \R^3,\ \,i=1,\cdots,N,
\end{equation*}
due to the fact that  $V(x)\geq0$. Since $u_i\in H^1(\R^3)$, by Sobolev's embedding theorem it gives that $u_i\in L^q(\R^3)$ for $2\leq q\leq 6$, which further implies that $\rho_\g\in L^r(\R^3)$  holds for $1\leq r\leq3$.
Since $\|\f53 a\rho_{\g}^{\f 23}\|_{L^2\big(B_2(y)\big)}\leq \f 53 a\|\rho_{\g}\|^{\f{2}{3}}_{L^{\f43}(\R^3)}$ holds for any $y\in\R^3$, by De Giorgi-Nash-Moser theory (cf. \cite[Theorem 4.1]{hl}), one can deduce    that for any $y\in\R^3$,
\begin{equation}\label{ub}
\|u_i\|_{L^\infty(B_1(y))}\leq C\|u_i\|_{L^2(B_2(y))},\ \,i=1,\cdots,N,
\end{equation}
where $C>0$ depends on $a,\,\mu_i$ and $\|\rho_{\g}\|_{L^{\f43}(\R^3)}$. It then follows from \eqref{ub} that  $u_i\in L^\infty(\R^3)$ and $\lim_{|x|\to\infty}|u_i(x)|=0$ for any fixed $a>0$. Since $V(x)\in L_{loc}^\infty(\R^3)$, it yields that $\big(V(x)-\f53 a\rho_{\g}^{\f 23}\big)u_i\in L^2_{loc}(\R^3).$   Applying $L^p$-theory (cf. \cite[Theorem 8.8]{gt}), we thus get from \eqref{lm} that $u_i(x)\in W^{2,2}_{loc}(\R^3)$.  By Sobolev's embedding theorem, we hence derive that the claim \eqref{continuous} holds true.

Since $V(x)\in L_{loc}^\infty(\R^3)$, it follows  from \eqref{continuous} that
\begin{equation*}
V(x)-\f{5a}3
\rho_\g ^{\f23}\in L_{loc}^2(\R^3)\ \, \text{ and }\ \lim_{|x|\to\infty}\Big(V(x)-\f{5a}3
			\rho_\g^{\f23}\Big)=\infty.
\end{equation*}
Note  from \cite[Theorems \uppercase\expandafter{\romannumeral10}\uppercase\expandafter{\romannumeral3}.47 and  \uppercase\expandafter{\romannumeral10}\uppercase\expandafter{\romannumeral3}.67]{Sim4} that $H_V$ has purely discrete spectrum, and its first eigenvalue  is   simple, whose eigenfunction is strictly positive. In particular, $H_V$ admits at least $N$ eigenvalues, counted with multiplicity. Let $\lambda_1<\lambda_2\leq\cdots\leq\lambda_N$ be the $N$ first  eigenvalues of $H_V$ in $L^2(\R^3)$, and suppose $v_i$ is the  eigenfunction associated to $\lambda_i$, where $\langle v_i,v_j\rangle_{L^2(\R^3)}=\delta_{ij}$ holds for $i,j=1,\cdots,N$.
Since $\g$ is a minimizer of \eqref{HV}, we have for $\tilde{\g}:=\sum_{i=1}^N|v_i\rangle\langle v_i|$,
\begin{equation*} \sum_{i=1}^N\lambda_i\leq\sum_{i=1}^N\mu_i=\operatorname{Tr}H_V(\g)=\inf_{\g'\in\mathcal{K}_N}\,\operatorname{Tr}H_V(\g')\leq \operatorname{Tr}H_V(\tilde\g)=\sum_{i=1}^N\lambda_i,
		\end{equation*}
which gives that
\begin{equation*}
\sum_{i=1}^N\lambda_i=\sum_{i=1}^N\mu_i,
\end{equation*}
and hence $\mu_1<\mu_2\leq\dots\leq \mu_N$ are the $N$ first eigenvalues of the operator $H_V$ in $L^2(\R^3)$. Therefore, $(u_1,\cdots,u_N)$ is a ground state of $I_a(N)$.

(2). Let
\begin{equation}\label{2-1}
			\gamma^{(N)}= \sum_{i=1}^N|Q_i\rangle\langle Q_i|,\ \ \left<Q_i,Q_j\right>_{L^2(\R^3)}=\delta_{ij}\ \text{ for }\ i,\,j=1,\cdots,N,
		\end{equation}
be an optimizer of  $a_N^*$ defined by \eqref{a*}.   Recall from \cite[Proposition 11]{FGL21} that
\begin{equation}\label{Q}
Q_i\in C^\infty(\R^3), \text{ and  }\ |Q_i(x)|,\,|\nabla Q_i(x)|=O\Big(\f{\e^{-\sqrt{|\hat\mu_i|}|x|}}{|x|}\Big)\ \text{ as }\ |x|\to\infty,\,\ i=1,\cdots,N,
\end{equation}
where $\hat\mu_i<0$ denotes the $i$-th eigenvalue (counted with multiplicity) of the operator   $-\Delta-\f53a_N^*\rho_{\g^{(N)}}^{\f23}$ in $L^2(\R^3)$.
Choose a cut-off function $\varphi\in C_c^\infty(\R^3,[0,1])$ such that $\varphi(x)\equiv1$ for $|x|<1$ and $\varphi(x)\equiv0$ for $|x|>2$. For any $y_0\in\R^3$ and $\tau>0$,  define
\begin{equation*}
Q_i^\tau(x)=A_i^\tau\tau^{\f32}\varphi(x-y_0)Q_i\big(\tau\l(x-y_0\r)\big),\ \ i=1,\cdots,N,
\end{equation*}
where $A_i^\tau>0$ is chosen such that $\int_{\R^3}|Q_i^\tau(x)|^2dx=1$. By the exponential decay of $Q_i$ in \eqref{Q}, it gives that
\begin{equation} \label{At}
\begin{split}
1\leq (A_i^\tau)^2&=\f{1}{\tau^3\int_{\R^3}\varphi^2(x-y_0)Q_i^2\l(\tau\l(x-y_0\r)\r)dx}\\
&=\f{1}{1-\int_{\R^3}\l(1-\varphi^2(\f{x}{\tau })\r)Q^2_i\l(x\r)dx}\\
&\leq \f{1}{1-\int_{|x|>\tau}Q^2_i\l(x\r)dx}\\
&=1+O(\tau^{-\infty})\,\ \hbox{as}\ \ \tau\to\infty,\ \ i=1,\cdots, N,
\end{split}
\end{equation}
where and below the notation $f(t)=O(t^{-\infty})$ denotes that
$\displaystyle\lim_{t\to\infty}t^s|f(t)|=0$ for any $s>0$.
It then follows from  \eqref{2-1}--\eqref{At} that
\begin{equation}\label{at}
\begin{split}
a_\tau:=\displaystyle{\max_{ i\neq j}}\big|\left<Q_i^\tau, Q_j^\tau\right>_{L^2(\R^3)}\big|=O(\tau^{-\infty})\ \text{ as }\ \tau\to\infty.
\end{split}
\end{equation}
We thus deduce that the Gram matrix
\begin{equation}\label{gram}
G_\tau:= \left(
\begin{array}{c}
Q_1^\tau \\
\vdots \\
Q_N^\tau \\
\end{array}
\right)
\left(
\begin{array}{ccc}
Q_1^\tau & \cdots & Q_N^\tau \\
\end{array}
\right)=\left(
\begin{array}{ccc}
1 & \cdots & \langle Q_1^\tau,Q_N^\tau\rangle \\
\vdots & \ddots & \vdots \\
\langle Q_N^\tau,Q_1^\tau\rangle& \cdots & 1 \\
\end{array}
\right)
\end{equation}
is positive definite for sufficiently large $\tau>0$.

Define for sufficiently large  $\tau>0$,
\begin{equation}\label{nQ}	
\big(\widetilde{Q}_1^\tau \cdots \widetilde{Q}_N^\tau\big)	:=\big(Q_1^\tau\cdots Q_N^\tau\big) G_\tau^{-\f 12}.
\end{equation}
It then follows from \eqref{gram}   that for sufficiently large  $\tau>0$,
\begin{equation*}
\l<\widetilde{Q}_i^\tau,\widetilde{Q}_j^\tau\r>_{L^2(\R^3)}=\delta_{ij},\ \,1\leq i,j\leq N.
\end{equation*}
Since $(1+x)^{-\f12}=1-\f12 x+O(x^2)$ as $x\to0$, one can deduce from \eqref{gram} and \eqref{nQ} that
\begin{equation}\label{taylor}
\big(\widetilde{Q}_1^\tau \cdots \widetilde{Q}_N^\tau\big)	=\big(Q_1^\tau\cdots Q_N^\tau\big)+O(a_\tau)\ \ \hbox{as}\ \ \tau\to\infty.
\end{equation}
Denoting \begin{equation*}
\widetilde{\g}_\tau^{(N)}:= \sum_{i=1}^{N}|\widetilde Q_i^\tau\rangle\langle \widetilde Q_i^\tau|,
\end{equation*}
one can derive from \eqref{Q}--\eqref{at} and \eqref{taylor} that
\begin{equation}\label{trd}
\begin{split}
E_a(\widetilde{\g}_\tau^{(N)})&=\operatorname{Tr}(-\Delta \widetilde{\g}_\tau^{(N)})+\operatorname{Tr}\big(V(x){\widetilde{\g}_\tau^ {(N)}}\big)-a\int_{\R^3}\rho_{\widetilde{\g}_\tau^{(N)}}^{\f53}dx\\
&=\dsum_{i=1}^N\int_{\R^3}|\nabla \widetilde{Q}^\tau_i|^2dx+\dsum_{i=1}^N\int_{\R^3}V(x)|\widetilde{Q}^\tau_i|^2dx-a\int_{\R^3}\Big(\dsum_{i=1}^N|\widetilde{Q}^\tau_i|^2\Big)^{\f53}dx\\
&=\tau^2	\Big[\operatorname{Tr}\big(-\Delta{\g}^{(N)}\big)-a\int_{\R^3}\rho_{\gamma^{(N)}}^{\f53}dx\Big]\\
&\quad+\int_{\R^3}V\Big(\f{x}{\tau}+y_0\Big)\varphi^2\l(\f{x}{\tau}\r)\rho_{\g^{(N)}}dx+O(\tau^{-\infty})\\
&=\tau^2\l(a_N^*-a\r)\int_{\R^3}\rho_{\gamma^{(N)}}^{\f53}dx+\int_{\R^3}V\Big(\f{x}{\tau}+y_0\Big)\varphi^2\l(\f{x}{\tau}\r)\rho_{\g^{(N)}}dx\\
&\quad+O(\tau^{-\infty})\ \text{ as }\ \tau\to\infty,\\
\end{split}
\end{equation}
where the last equality follows from the fact that $\g^{(N)}$ is an optimizer of $a_N^*$ satisfying $\|\g^{(N)}\|=1$.

On the other hand, since the function $x\mapsto V(\f{x}{\tau}+y_0)\varphi^2(\f{x}{\tau})$ is bounded  uniformly and has compact support, we obtain from  Lebesgue's dominated convergence that
\begin{equation}\label{V(y0)}
\lim_{\tau\to\infty}\int_{\R^3}V\Big(\f{x}{\tau}+y_0\Big)\varphi^2\l(\f{x}{\tau}\r)\rho_{\g^{(N)}}dx=NV(y_0),
\end{equation}
where $V\in C(\R^3)$ is also used.
It then follows from \eqref{trd} and \eqref{V(y0)} that for $a>a_N^*$,
\begin{equation*}
			I_a(N)\leq\lim_{\tau\to\infty}E_a(\widetilde{\g}_\tau^{(N)})=-\infty,
\end{equation*}
which implies that there is no minimizer of  $I_a(N)$ for  $a>a_N^*$.

For the case $a=a_N^*$, we derive from \eqref{trd} and \eqref{V(y0)} that $I_{a_N^*}(N)\leq N V(y_0)$ for each $y_0\in\R^3$. Taking the infimum over $y_0$, it then yields that $I_{a_N^*}(N)\leq0$. Thus, one can  deduce from \eqref{bounded} that $I_{a_N^*}(N)=0$. On the contrary, suppose that there exists a minimizer $\gamma_N= \sum_{i=1}^N|u_i\rangle\langle u_i|$ of $I_{a_N^*}(N)$, where $\left<u_i,u_j\right>_{L^2(\R^3)}=\delta_{ij}$ for $i,\,j=1,\cdots,N$.
We then obtain that
\begin{equation}\label{2-2}
			\int_{\R^3}V(x)\rho_{\g_N}dx=\inf_{x\in\R^3}V(x)=0,
\end{equation}
and
\begin{equation}\label{2-3}
\operatorname{Tr}(-\Delta\gamma_N)=a_N^*{\int_{\R^3}\rho_{\gamma_N}^{\f53}dx}.
\end{equation}
Note from \eqref{2-2} that $\rho_{\gamma_N}$ has compact support, due to the fact $\lim_{|x|\to\infty}V(x)=\infty$. However, it yields from \eqref{2-3} that $\gamma_N$ is an optimizer of $a_N^*$, and hence $u_1$ is the first eigenfunction of the operator $-\Delta-\frac{5}{3}a_N^*\rho_{\g_N}^{\frac{2}{3}}$ in $L^2(\R^3)$. It then follows from \cite[Theorem 11.8]{Lieb01} that $u_1^2>0$ in $\R^3$, which implies that $\rho_{\g_N}=\sum_{i=1}^N|u_i|^2>0$ in $\R^3$, a contradiction. Therefore, there is no minimizer for the case  $a=a_N^*$.

The above analysis yields that $I_a(N)=-\infty$ holds for $a>a_N^*$ and $I_{a_N^*}(N)=0$. Also, it follows easily from \eqref{trd} and \eqref{V(y0)} that $\lim_{a\nearrow a_N^*}I_{a}(N)=0$. This completes the proof of  Theorem \ref{groundstate}.\qed

\section{Energy Estimates of $I_a(N)$ as $a\nearrow a_N^*$}\label{3}

Under the assumption that $V(x)$ satisfies \eqref{eV}, the proof of Theorem \ref{groundstate} gives that $I_a(N)\to 0$ as $a\nearrow a_N^*$. The main purpose of this section is to establish the following   refined   estimates of  $I_a(N)$ as $a\nearrow a_N^*$.

\begin{Theorem}\label{ee}
Suppose $V(x)\ge 0$ is given by \eqref{eV}, and let $N\in\mathbb{N}^+$ be chosen such that $a_N^*$ admits a full-rank optimizer. Then there exist two positive constants $C_1$ and $C_2$, independent of $a>0$, such that
\begin{equation}\label{3.1}
C_1(a_N^*-a)^{\f 12}\leq I_a(N)\leq C_2(a_N^*-a)^{\f 12}\ \  \text{as}\ \ a\nearrow a_N^*.
\end{equation}
\end{Theorem}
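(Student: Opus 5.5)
The upper bound will come from the trial state already built in the proof of Theorem \ref{groundstate}(2). The lower bound will come from a contradiction argument with blow-up, using the rough estimate \eqref{reee} to force concentration.

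\textbf{Upper bound.} I take the trial state $\widetilde{\g}_\tau^{(N)}$ with $y_0=x_0:=(p_0,0)$, $|p_0|=A$, so that $V(x_0)=0$. By \eqref{trd},
\[
E_a(\widetilde\g^{(N)}_\tau)=\tau^2(a_N^*-a)\int_{\R^3}\rho_{\g^{(N)}}^{\f53}dx+\int_{\R^3}V\Big(\f x\tau+x_0\Big)\varphi^2\Big(\f x\tau\Big)\rho_{\g^{(N)}}dx+O(\tau^{-\infty}).
\]
Since $V$ is smooth near $x_0$ and vanishes quadratically there, we have $V(x_0+y)\le C|y|^2$ for $|y|\le 2$. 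Hence the potential term is at most $C\tau^{-2}\int_{\R^3}|x|^2\rho_{\g^{(N)}}dx$, which is finite by \eqref{Q}. Choosing $\tau=(a_N^*-a)^{-1/4}$ gives $I_a(N)\le C_2(a_N^*-a)^{1/2}$.

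\textbf{Lower bound.} Let $\g_a$ be a minimizer and set $\epsilon_a^{-2}:=\operatorname{Tr}(-\Delta\g_a)$. By \eqref{bounded}, $I_a(N)\ge (1-a/a_N^*)\epsilon_a^{-2}+\int_{\R^3} V\rho_{\g_a}dx$. The plan is to show:
\begin{enumerate}
\item[(i)] $\epsilon_a\to0$ as $a\nearrow a_N^*$;
\item[(ii)] $\int_{\R^3} V\rho_{\g_a}dx\ge c\,\epsilon_a^2$.
\end{enumerate}
Granting these, minimizing $(a_N^*-a)t^{-2}/a_N^*+ct^2$ over $t>0$ yields $I_a(N)\ge C_1(a_N^*-a)^{1/2}$.

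For (i): if $\operatorname{Tr}(-\Delta\g_a)$ stayed bounded along some sequence, the compactness argument of Theorem \ref{groundstate}(1) would produce a minimizer of $I_{a_N^*}(N)=0$, which that theorem excludes.

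For (ii), I blow up. Set $w_i^a(x)=\epsilon_a^{3/2}u_i^a(\epsilon_a x+x_a)$, where $x_a$ is chosen so that a fixed ball carries a positive amount of the rescaled density, uniformly in $a$. This is possible by a vanishing-excluded argument: $\operatorname{Tr}(-\Delta\widetilde\g_a)=1$, and from $I_a(N)\to0$ and \eqref{bounded} we get $a\int_{\R^3}\rho_{\widetilde\g_a}^{5/3}dx = 1+o(1)$ in the rescaled variables, so Lions' lemma prevents vanishing. Then $\widetilde\g_a\to\g$ along subsequences, and the limit $\g$ has $\int_{B_R}\rho_\g dx\ge\eta>0$. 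Since $I_a(N)\to0$ forces $\int_{\R^3} V\rho_{\g_a}dx\to0$, the points $x_a$ approach the ring of minima of $V$. Writing $V(\epsilon_a x+x_a)$ with $V(y)\ge c\,\mathrm{dist}(y,\{r=A,x_3=0\})^2$ near the ring, I expect
\[
\epsilon_a^{-2}\int_{\R^3} V\rho_{\g_a}dx=\int_{\R^3}\epsilon_a^{-2}V(\epsilon_a x+x_a)\rho_{\widetilde\g_a}dx\ge c\int_{B_R}\Big|\f{\mathrm{dist}(\epsilon_ax+x_a,\text{ring})}{\epsilon_a}\Big|^2\rho_{\widetilde\g_a}dx.
\]
The scaled distance function, $x\mapsto \mathrm{dist}(\epsilon_a x+x_a,\text{ring})/\epsilon_a$, is (up to a recentering offset) locally close to the distance to a line in $\R^3$, namely the tangent direction of the ring at $\lim x_a$. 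The squared distance to any line is positive away from a null set. Since $\rho_\g$ has positive mass in $B_R$ and is not concentrated on a line, the last integral is bounded below by a positive constant. If the offset diverges, the integral diverges, which only helps.

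\textbf{Main obstacle.} The hard step is making this lower bound uniform. One must exclude, by contradiction, a sequence along which $\epsilon_a^{-2}\int_{\R^3} V\rho_{\g_a}dx\to0$. This requires controlling the ring geometry, whose tangent line varies with $x_a$, and showing the limiting density cannot be supported on a line. The latter holds because $\g$ is a nonzero $H^1$ limit, so $\rho_\g$ is absolutely continuous. The rough estimate \eqref{reee} is used to ensure the blow-up scale is consistent: it gives $\epsilon_a\to0$ at least polynomially, and the contradiction argument then yields the optimal exponent.
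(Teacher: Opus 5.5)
Your argument is correct. Its skeleton matches the paper's. The upper bound comes from a rescaled optimizer centred at a point of the ring with $\tau=(a_N^*-a)^{-1/4}$. The lower bound combines $\epsilon_a^{-2}=\operatorname{Tr}(-\Delta\gamma_a)$, $\epsilon_a\to0$ (via nonexistence of minimizers at $a_N^*$), a non-vanishing blow-up, the estimate $\int_{\mathbb{R}^3}V\rho_{\gamma_a}dx\ge c\,\epsilon_a^2$, and optimization in $\epsilon_a$.

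Where you genuinely differ is in how you prove that last estimate.

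The paper first upgrades the blow-up to strong $H^1$ convergence to an optimizer of $a_N^*$. This is Lemma \ref{3.3L}(3), which uses the dual constant $L_N^*$, a dichotomy argument and $L_{2N}^*>L_N^*$. Lemma \ref{Vll} then uses the weaker bound $V\ge c(|x|-A)^2$ (distance to a sphere) and an explicit cone decomposition. Its positive constant $\int_{\Omega_1}|x|^2\rho_{\bar\gamma}dx$ implicitly relies on $\rho_{\bar\gamma}>0$, i.e.\ on $\bar\gamma$ being an optimizer.

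You instead use $V\ge\min\{\omega_1,\omega_2\}\,\mathrm{dist}(\cdot,\text{ring})^2$. This holds globally, since $\mathrm{dist}(y,\text{ring})^2=(r-A)^2+x_3^2$. You observe that the rescaled ring either escapes from $B_R$ or converges uniformly on $B_R$ to a line. You then conclude by contradiction using only weak $H^1$ convergence, local $L^2$ (Rellich) convergence of the orbitals, and non-vanishing, because a line is Lebesgue-null. You should say explicitly that this is the mode of convergence behind ``$\widetilde\gamma_a\to\gamma$''.

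Your route avoids the finite-rank Lieb--Thirring machinery and does not need to identify the limit. The paper's heavier route pays off only because strong convergence to an optimizer is reused in Section \ref{4} for Theorem \ref{1.2T}.

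Two minor remarks:
\begin{itemize}
\item The rough estimate \eqref{reee} plays no role in your argument, since only $I_a(N)\to0$ is used. Your final sentence about it can be dropped.
\item For the upper bound, the untruncated trial state $\tau^{3/2}Q_i(\tau(x-x_0))$ used in Lemma \ref{ree} is already orthonormal and lies in $\mathcal{H}$. So the cut-off and Gram-matrix bookkeeping behind \eqref{trd} is unnecessary. Also, $V(x_0+y)\le\max\{\omega_1,\omega_2\}|y|^2$ holds for all $y$, because $|r-A|\le|(x_1,x_2)-p_0|$, so no smoothness of $V$ near $x_0$ is needed.
\end{itemize}
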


In order to prove Theorem \ref{ee}, we first derive the following rough estimates of the energy $I_a(N)$ as $a\nearrow a_N^*$.

\begin{Lemma}\label{ree}
Suppose $V(x)\ge 0$ is given by \eqref{eV}, and let $N\in\mathbb{N}^+$ be chosen such that $a_N^*$ admits a full-rank optimizer.  Then there exist two positive constants $C_3$ and $C_4$, independent of $a>0$, such that
\begin{equation}\label{3.2}
C_3(a_N^*-a)^{\f 35}\leq I_a(N)\leq C_4(a_N^*-a)^{\f 12}\ \ \text{as}\ \ a\nearrow a_N^*.
\end{equation}
\end{Lemma}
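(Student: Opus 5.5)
The upper bound follows from the trial states already built in the proof of Theorem \ref{groundstate}(2), now with an optimized scaling parameter. The lower bound will come from splitting the energy into a kinetic part controlled via \eqref{a*} and a potential part controlled by an uncertainty-type estimate for $V$, using only the $x_3$-direction and the radial direction.

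\emph{Upper bound.} Take a full-rank optimizer $\g^{(N)}=\sum_{i=1}^N|Q_i\rangle\langle Q_i|$ of $a_N^*$ with $\|\g^{(N)}\|=1$. Choose $y_0=(p_0,0)$ with $|p_0|=A$, so that $V(y_0)=0$. From \eqref{trd},
\[
E_a(\widetilde\g_\tau^{(N)})=\tau^2(a_N^*-a)\int_{\R^3}\rho_{\g^{(N)}}^{\f53}dx+\int_{\R^3}V\Big(\f x\tau+y_0\Big)\varphi^2\Big(\f x\tau\Big)\rho_{\g^{(N)}}dx+O(\tau^{-\infty}).
\]
Since $\big||y_0+x/\tau|'-A\big|\le |x|/\tau$, where $|\cdot|'$ denotes the norm of the first two coordinates, we have $V(x/\tau+y_0)\le(\omega_1+\omega_2)|x|^2/\tau^2$. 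The second moment of $\rho_{\g^{(N)}}$ is finite by the decay \eqref{Q}, so the potential term is $O(\tau^{-2})$. Choosing $\tau=(a_N^*-a)^{-1/4}$ then gives $I_a(N)\le C_4(a_N^*-a)^{1/2}$.

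\emph{Lower bound.} For any admissible $\g=\sum_{i=1}^N|u_i\rangle\langle u_i|$ (so $\|\g\|=1$), \eqref{a*} gives, exactly as in \eqref{bounded},
\[
E_a(\g)\ge\Big(1-\f a{a_N^*}\Big)\operatorname{Tr}(-\Delta\g)+\int_{\R^3}V\rho_\g\,dx.
\]
Write $\epsilon=(a_N^*-a)/a_N^*$. The task is to bound $\epsilon T+P$ from below, with $T=\operatorname{Tr}(-\Delta\g)$ and $P=\int V\rho_\g$.

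\emph{The main obstacle} is the absence of a clean Heisenberg inequality around a ring: in the radial variable $r$, the Hardy-type term from the cylindrical Laplacian spoils an exact harmonic-oscillator bound. The cheap route is to use only one dimension. The operator $-\epsilon\partial_{x_3}^2+\omega_2x_3^2$ has ground energy $\sqrt{\epsilon\omega_2}$, and $\operatorname{Tr}(-\Delta\g)\ge\operatorname{Tr}(-\partial_{x_3}^2\g)$ since each $u_i$ has $\int|\nabla u_i|^2\ge\int|\partial_3u_i|^2$. Hence
\[
\epsilon T+P\ge\sum_{i=1}^N\langle u_i,(-\epsilon\partial_3^2+\omega_2x_3^2)u_i\rangle\ge N\sqrt{\epsilon\omega_2},
\]
which already gives the order $(a_N^*-a)^{1/2}$.

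This would prove even Theorem \ref{ee}'s lower bound. Because the lemma only asks for the weaker exponent $3/5$, I suspect the authors instead bound $\int V\rho_\g$ from below by an interpolation. One such route: $\int\rho_\g^{5/3}$ is controlled by $T$, and the mass $N$ must be spread where $V$ is small. On the set $\{V\le s\}$, a tube of volume $\sim A s$, Hölder gives
\[
N\le\int_{\{V\le s\}}\rho_\g+s^{-1}P\le (Cs)^{2/5}\|\rho_\g\|_{5/3}+s^{-1}P.
\]
Optimizing $s$ yields an exponent of $3/5$, which is where that exponent comes from. I would present the simpler one-dimensional oscillator argument if it is admissible (it is, since the $\partial_3$ part of the kinetic energy is separable). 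As a fallback, I would present the Hölder/tube argument, giving $C_3(a_N^*-a)^{3/5}$ as stated. Either way, the lemma follows for $a$ close to $a_N^*$.
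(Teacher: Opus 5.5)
Your proposal is correct, but your primary lower-bound argument takes a genuinely different and stronger route than the paper. The upper bound is the paper's argument in substance. The paper uses the plain dilations $\tau^{3/2}Q_i(\tau(x-y_0))$ with $y_0=(A,0,0)$. These are already orthonormal and have potential energy at most $C\tau^{-2}$, because $V$ grows quadratically while $Q_i$ decays exponentially. So the cut-off and Gram--Schmidt correction behind \eqref{trd} are unnecessary, though your version also works.

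Your fallback lower bound is essentially the paper's argument. The paper writes $E_a(\gamma)\ge N\beta-\int(\beta-V)_+\rho_\gamma+(a_N^*-a)\int\rho_\gamma^{5/3}$ and applies Young's inequality pointwise to get $N\beta-C(a_N^*-a)^{-3/2}\int(\beta-V)_+^{5/2}$. It then uses $\int(\beta-V)_+^{5/2}\le C\beta^{7/2}$, since the tube $\{V\le\beta\}$ has volume of order $\beta$, and takes $\beta\sim(a_N^*-a)^{3/5}$. Your H\"older--Chebyshev splitting over $\{V\le s\}$ is the dual form of this bathtub computation.

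Your primary argument instead keeps the fraction $(a_N^*-a)/a_N^*$ of the kinetic energy rather than trading all of it for $\int\rho_\gamma^{5/3}$. You then apply the one-dimensional uncertainty principle in $x_3$, which is legitimate slice-wise for $u_i\in\mathcal{H}$ since $\int x_3^2|u_i|^2\le\omega_2^{-1}\int V|u_i|^2<\infty$. This gives $I_a(N)\ge N\sqrt{\omega_2/a_N^*}\,(a_N^*-a)^{1/2}$ for every $0<a<a_N^*$. That implies \eqref{3.2} once $a_N^*-a\le 1$. It also proves the lower bound of Theorem \ref{ee} directly, so Lemma \ref{Vll} and the blow-up argument in the proof of Theorem \ref{ee} are not needed for that estimate.

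What the paper's route buys is generality. It uses only $L^{5/3}$-control of $\rho_\gamma$ and the volume of sublevel sets of $V$. It is sharp for isolated nondegenerate minima, where $|\{V\le\beta\}|\sim\beta^{3/2}$, but it loses the optimal exponent when the zero set of $V$ is a curve. Your argument needs $V$ to dominate a nondegenerate quadratic in some fixed direction, which \eqref{eV} supplies through $V\ge\omega_2x_3^2$.
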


\noindent{\bf Proof.}
Let $\g=\sum_{i=1}^N|u_i\rangle\langle u_i|$ be an operator satisfying  $u_i\in\mathcal{H}$ and $\l<u_i,u_j\r>_{L^2(\R^3)}=\delta_{ij}$ for $i,\,j=1,\cdots,N$. By Young's inequality, we obtain from \eqref{a*}  that for any  $\beta>0$ and $0<a<a_N^*$,
\begin{equation}\label{lowerea}
\begin{split}
E_a(\gamma)&\geq \int_{\R^3}V(x)\rho_\gamma dx+(a_N^*-a)\int_{\R^3}\rho_\gamma^{\f53}dx\\
&=N\beta+ \int_{\R^3}\big(V(x)-\beta\big)\rho_\gamma dx+(a_N^*-a)\int_{\R^3}\rho_\gamma^{\f53}dx\\
&\geq N\beta- \int_{\R^3}\big(\beta-V(x)\big)_+\rho_\gamma dx+(a_N^*-a)\int_{\R^3}\rho_\gamma^{\f53}dx\\
&\geq N\beta-\f25\left(\f 35\right)^{\f 32}\f{1}{(a_N^*-a)^\f32}\int_{\R^3}\big(\beta-V(x)\big)_+^{\f 52}dx,
\end{split}
\end{equation}
where $(\cdot)_+=\max\{\cdot,0\}$ denotes the positive part. Since $V(x)=\omega_1\big(\sqrt{x_1^2+x_2^2}-A\big)^2+\omega_2{x_3^2}$, where $\omega_1>0$ and $\omega_2>0$,  we derive  that for sufficiently small $\beta>0$,
\begin{equation}\label{3-1}
\begin{split}
&\quad\int_{\R^3}\big(\beta-V(x)\big)_+^{\f 52}dx\\
&=	\int_{\R^3}\Big[\beta-\Big(\omega_1\Big(\sqrt{x_1^2+x_2^2}-A\Big)^2+\omega_2x_3^2\Big)\Big]_+^{\f 52}dx\\
&=2\pi\int_{-\sqrt{\f\beta {\omega_2}}}^{\sqrt{\f\beta {\omega_2}}}dx_3\int_{A-\sqrt{\f1 {\omega_1}(\beta-{\omega_2}x_3^2)}}^{A+\sqrt{\f1 {\omega_1}(\beta-\omega_2x_3^2)}}\Big[\beta-\l(\omega_1\left(r-A\right)^2+\omega_2x_3^2\r)\Big]^{\f 52}rdr\\
&=2\pi\int_{-\sqrt{\f\beta {\omega_2}}}^{\sqrt{\f\beta {\omega_2}}}dx_3\int_{-\f\pi2}^{\f\pi2}\l(\beta-{\omega_2}x_3^2\r)^{\f 52}\Big(A+\sqrt{\f{\beta-{\omega_2}x_3^2} {\omega_1}}\sin\theta\Big)\sqrt{\f{\beta-{\omega_2}x_3^2}{\omega_1}}\cos^6\theta d\theta\\
&\leq C\int_{-\sqrt{\f\beta {\omega_2}}}^{\sqrt{\f\beta {\omega_2}}}(\beta-\omega_2x_3^2)^3dx_3\leq C_1\beta^{\f 72},
\end{split}
\end{equation}
where we change the variable ${r}=A+\sqrt{\f1 {\omega_1}(\beta-\omega_2x_3^2)}\sin\theta$ with $-\f \pi2\leq\theta\leq\f \pi2$ in the third equality.
The lower bound of \eqref{3.2} then follows from \eqref{lowerea} and \eqref{3-1} by taking $\beta=\f 12\Big(\f{N}{\f25\left(\f35\right)^{\f 32}C}\Big)^{\f 25}(a_N^*-a)^\f 35>0$.

We next prove the upper bound of \eqref{3.2}. For any $\tau>0$ and $y_0\in\R^3$, set
\begin{equation*}
u_i^\tau(x)=\tau^{\f 32}Q_i\big(\tau(x-y_0)\big),\quad  i=1,\cdots,N ,
\end{equation*}
where $\gamma^{(N)}= \sum_{i=1}^N|Q_i\rangle\langle Q_i|$ is an optimizer of $a_N^*$ satisfying $\langle Q_i,Q_j\rangle_{L^2(\R^3)}=\delta_{ij}$ for $i,\,j=1,\cdots,N$. We then obtain that $\left(u_1^\tau,\cdots,u_N^\tau\right)$ is orthonormal in $L^2(\R^3)$, which implies from \eqref{Q} that $\gamma_\tau:=\sum_{i=1}^N|u_i^\tau\rangle\langle u_i^\tau|$ is in the admissible set of $I_a(N)$. Direct calculations  yield that
\begin{equation}\label{3.3}
\begin{split}
\operatorname{Tr}(-\Delta\gamma_\tau)-a\int_{\R^3}\rho_{\g_\tau}^{\f53}dx&=\tau^2\left(\operatorname{Tr}(-\Delta\gamma^{(N)})-a\int_{\R^3}\rho_{\g^{(N)}}^{\f 53}dx\right)\\
&=\tau^2\left(a_N^*-a\right)\int_{\R^3}\rho_{\g^{(N)}}^{\f 53}dx,
\end{split}
\end{equation}
where $\rho_{\g_\tau}:=\sum_{i=1}^N|u_i^\tau|^2$ and $\rho_{\g^{(N)}}:=\sum_{i=1}^N|Q_i|^2$.
By the exponential decay \eqref{Q} of $Q_i$, we derive that
\begin{equation}\label{3.4}
\begin{split}
\operatorname{Tr}\big(V(x)\g_\tau\big)=	 &\int_{\R^3}\l(\omega_1\left(r-A\right)^2+\omega_2x_3^2\r)\rho_{\g_\tau} dx\\
=&\int_{\R^3}\Big[\omega_1\big(\sqrt{\left({x_1}/\tau+A\right)^2+\left({x_2}/\tau\right)^2}-A\big)^2+\omega_2\l({x_3}/\tau\r)^2\Big]\rho_{\g^{(N)}} dx\\
\leq& \tau^{-2}\int_{\R^3}\Big[\omega_1(x_1^2+x_2^2)+\omega_2x_3^2\Big]\rho_{\g^{(N)}}dx\leq\f{C}{\tau^2},
\end{split}
\end{equation}
where we have chosen $y_0=(A,0,0)\in\R^3$ for the second equality. It thus follows from \eqref{3.3} and \eqref{3.4} that
\begin{equation*}
I_a(N)\leq C'(a_N^*-a)\tau^2+\f{C}{\tau^2},
\end{equation*}
which gives the upper bound of \eqref{3.2} by taking $\tau=(a_N^*-a)^{-\f14}>0$. The proof of Lemma \ref{ree} is therefore complete.\qed

Applying Lemma \ref{ree}, we have the following estimates of $\int_{\R^3}\rho_{\g_a}^{\frac{5}{3}}dx$ as $a\nearrow a^*_N$, where $\g_a$ is a minimizer of $I_a(N)$.

\begin{Lemma}\label{erho}
Suppose $V(x)\ge 0$ is given by \eqref{eV}, and let $N\in\mathbb{N}^+$ be chosen such that
$a_N^*$ admits a full-rank optimizer.
 Assume $\g_{a}=\sum_{i=1}^N|u^a_i\rangle\langle u^a_i|$ is a minimizer of $I_a(N)$, where $u_i^a\in\mathcal{H}$ satisfies $\langle u_i^a,u_j^a\rangle_{L^2(\R^3)}=\delta_{ij}$ for $i,\,j=1,\cdots,N$. Then there exists a positive constant $K$, independent of $a>0$, such that
\begin{equation}\label{re}
0<K(a_N^*-a)^{-\f13}\leq\int_{\R^3}\rho_{\g_a}^{\f 53}dx\leq \f 1K(a_N^*-a)^{-\f12 }\ \ \text{as}\ \ a\nearrow a_N^*.
\end{equation}
\end{Lemma}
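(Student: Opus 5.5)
Both bounds in \eqref{re} come from Lemma \ref{ree}, which gives $C_3(a_N^*-a)^{\f35}\le I_a(N)=E_a(\g_a)\le C_4(a_N^*-a)^{\f12}$, combined with the lower bound \eqref{lowerea}, a rescaling comparison, and the interpolation inequality \eqref{a*}.

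\emph{Upper bound.} From \eqref{lowerea} we have
\begin{equation*}
(a_N^*-a)\int_{\R^3}\rho_{\g_a}^{\f53}dx\le E_a(\g_a)=I_a(N)\le C_4(a_N^*-a)^{\f12},
\end{equation*}
so $\int_{\R^3}\rho_{\g_a}^{\f53}dx\le C_4(a_N^*-a)^{-\f12}$. This settles the right-hand inequality.

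\emph{Lower bound.} I would use a comparison argument: perturbing $a$ or scaling $\g_a$ relates $\int\rho_{\g_a}^{\f53}$ to the energy. Since $a\mapsto E_a(\g)$ is affine, $I_a(N)$ is concave and nonincreasing in $a$. For $0<b<a$ we have
\begin{equation*}
I_b(N)\le E_b(\g_a)=I_a(N)+(a-b)\int_{\R^3}\rho_{\g_a}^{\f53}dx .
\end{equation*}
Take $b=a-(a_N^*-a)$, so that $a_N^*-b=2(a_N^*-a)$. Then
\begin{equation*}
(a_N^*-a)\int_{\R^3}\rho_{\g_a}^{\f53}dx\ge I_b(N)-I_a(N)\ge C_3 2^{\f35}(a_N^*-a)^{\f35}-C_4(a_N^*-a)^{\f12}.
\end{equation*}
This lower bound is negative, since $\f35>\f12$ makes the first term smaller, so the comparison gives nothing. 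This is the main obstacle, and I would switch to a scaling argument instead.

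Consider $\g^\tau$ with $u_i^\tau=\tau^{\f32}u_i^a(\tau(x-x_0)+x_0)$, centred at a point $x_0$ on the ring. Minimality at $\tau=1$ gives the virial-type relation
\begin{equation*}
2\Big[\operatorname{Tr}(-\Delta\g_a)-a\int_{\R^3}\rho_{\g_a}^{\f53}dx\Big]=\int_{\R^3}(x-x_0)\cdot\nabla V\,\rho_{\g_a}dx ,
\end{equation*}
but its right side is not obviously sign-definite. So I would rather argue directly from the potential energy. Write $P:=\int V\rho_{\g_a}\le I_a(N)\le C_4(a_N^*-a)^{\f12}$. Hence $\rho_{\g_a}$ concentrates where $V\le\beta$ with $\beta=2C_4(a_N^*-a)^{\f12}/N$. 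Indeed,
\begin{equation*}
\int_{\{V>\beta\}}\rho_{\g_a}\le \f P\beta\le \f N2,
\end{equation*}
so at least mass $N/2$ lies in the set $\{V\le\beta\}$, whose volume is at most $C\beta^{\f32}$ by the computation in \eqref{3-1}. H\"older's inequality then gives
\begin{equation*}
\f N2\le \int_{\{V\le\beta\}}\rho_{\g_a}\le \Big(\int_{\R^3}\rho_{\g_a}^{\f53}\Big)^{\f35}|\{V\le\beta\}|^{\f25},
\end{equation*}
hence
\begin{equation*}
\int_{\R^3}\rho_{\g_a}^{\f53}dx\ge C\beta^{-1}=K(a_N^*-a)^{-\f12}\ge K(a_N^*-a)^{-\f13}.
\end{equation*}

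The volume estimate is the key step. Near the ring, $\{V\le\beta\}$ is a solid torus with cross-section of area about $\beta/\sqrt{\omega_1\omega_2}$ and length $2\pi A$, so its volume is of order $\beta$, not $\beta^{\f32}$. Redoing the H\"older step with this volume gives only $\int\rho_{\g_a}^{\f53}\ge C\beta^{-\f23}=C(a_N^*-a)^{-\f13}$. This is exactly the exponent stated in \eqref{re}, so I would carry out the argument with the torus volume $|\{V\le\beta\}|\le C\beta$, valid for small $\beta$ by the same change of variables as in \eqref{3-1}. Positivity of the lower bound $K(a_N^*-a)^{-\f13}$ is then immediate.
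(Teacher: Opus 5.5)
Your final argument is correct, but it takes a different route from the paper. The upper bound is handled exactly as in the paper.

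For the lower bound, the paper keeps the comparison $I_b(N)\le E_b(\g_a)=I_a(N)+(a-b)\int_{\R^3}\rho_{\g_a}^{\f53}dx$ that you abandoned. Your attempt failed only because $b$ was too close to $a$. The paper takes $a-b=C_0(a_N^*-a)^{\f56}$. Then $C_3(a_N^*-b)^{\f35}\ge C_3C_0^{\f35}(a_N^*-a)^{\f12}$ dominates $C_4(a_N^*-a)^{\f12}$ once $C_3C_0^{\f35}>2C_4$, and dividing by $a-b$ gives $(a_N^*-a)^{\f12-\f56}=(a_N^*-a)^{-\f13}$.

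Your replacement is also valid. You apply Markov's inequality to $\int_{\R^3}V\rho_{\g_a}dx\le I_a(N)\le C_4(a_N^*-a)^{\f12}$, then H\"older on $\{V\le\beta\}$, and use the solid-torus bound $|\{V\le\beta\}|\le C\beta$. This needs only the upper half of Lemma \ref{ree}. It also shows where the exponent $-\f13$ comes from: the sublevel sets of the ring potential are thin tori of volume of order $\beta$. That is the same geometric fact behind $\int(\beta-V)_+^{\f52}dx\le C\beta^{\f72}$ in \eqref{3-1}, so the two proofs rest on the same input.

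The paper's concavity device needs no geometry beyond two-sided energy bounds, and it upgrades automatically. Once Theorem \ref{ee} gives $I_a(N)\sim(a_N^*-a)^{\f12}$, choosing $a-b=C_0(a_N^*-a)$ yields the sharp lower bound $(a_N^*-a)^{-\f12}$ in \eqref{4.1}. Your H\"older bound cannot reach that exponent, because it cannot exclude mass spreading along the ring.

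In a write-up, delete the retracted intermediate claim $|\{V\le\beta\}|\le C\beta^{\f32}$. It is false for the ring potential and is not what \eqref{3-1} says. The $(a_N^*-a)^{-\f12}$ conclusion you drew from it is therefore unjustified at this stage.
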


\noindent{\bf Proof.}
Since $V(x)\geq0$, it follows from \eqref{a*} that
\begin{equation*}
I_a(N)=E_a(\gamma_a)\geq(a_N^*-a)\int_{\R^3}\rho^{\f53}_{\g_a}dx,
\end{equation*}
which gives the upper bound of \eqref{re} by using  Lemma \ref{ree}.
		
As for the lower bound of \eqref{re}, we choose $0<b<a<a_N^*$ so that
\begin{equation*}
I_b(N)\leq E_b(\gamma_a)=I_a(N)+(a-b)\int_{\R^3}\rho_{\gamma_a}^{\f 53}dx.
\end{equation*}
It then follows from Lemma \ref{ree} that
\begin{equation*}
\begin{aligned}
\int_{\R^3}\rho_{\gamma_a}^{\f 53}dx&\geq \f{	I_b(N)-	I_a(N)}{a-b}\geq \f{C_3(a_N^*-b)^{\f35}-C_4(a_N^*-a)^{\f  12}}{a-b}\\
&=(a_N^*-a)^{-\f13}\f{C_3C_0^{\f35}\Big[1+\f{(a_N^*-a)^{\f16}}{C_0}\Big]^{\f35}-C_4}{C_0}\ \ \hbox{as}\ \ a\nearrow a_N^*,
\end{aligned}
\end{equation*}
by taking $b=a-C_0(a_N^*-a)^{\f 56}\in(0,a)\subset(0,a_N^*)$. One can choose  $C_0>0$  large enough that $C_3C_0^{\f 35}>2C_4$, and hence
\begin{equation*}
\int_{\R^3}\rho_{\g_a}^{\f 53}dx\geq C(a_N^*-a)^{-\f13}\ \ \hbox{as}\ \ a\nearrow a_N^*,
\end{equation*}
which gives the lower bound of \eqref{re}. This completes the proof of Lemma \ref{erho}.\qed

Employing the blow-up analysis, we next address the following convergence of minimizers for $I_a(N)$ as $a\nearrow a_N^*$.

\begin{Lemma}\label{3.3L}
Suppose $V(x)\ge 0$ is given by \eqref{eV}, and let $N\in\mathbb{N}^+$ be chosen such that  any optimizer of $a_N^*$ is full-rank.
 Assume $\g_{a}=\sum_{i=1}^N|u^a_i\rangle\langle u^a_i|$ is a minimizer of $I_a(N)$, where $u_i^a\in\mathcal{H}$ satisfies \eqref{NLS} and  $\big\langle u_i^a,u_j^a\big\rangle_{L^2(\R^3)}=\delta_{ij}$ for $i,\,j=1,\cdots,N$. Define $\epsilon_a>0$ by
\begin{equation}\label{et}
\epsilon_a^{-2}:=\operatorname{Tr} (-\Delta \g_{a})>0,\ \ 0<a<a_N^*.
\end{equation}
Then we have
\begin{enumerate}
\item[(1)] The parameter $\epsilon_a>0$ satisfies
\begin{equation}\label{epsilon0}
\epsilon_a\to0\ \ \text{as}\ \  a\nearrow a^*_N.
\end{equation}

\item[(2)] There exist a sequence $\{y_{\epsilon_a}\}\subset\R^3$ and constants  $R_0,\,\eta>0$ such that the sequence
\begin{equation}\label{wia}
\bar w^a_i=\epsilon_a^{\f 32}u_i^a(\epsilon_a x+\epsilon_ay_{\epsilon_a}),\ \,i=1,\cdots,N,\ \ \bar{\g}_a:=\sum_{i=1}^N|\bar w^a_i\rangle\langle \bar w^a_i|
\end{equation}
satisfies
\begin{equation}\label{liminf}
\liminf_{a\nearrow a_N^*}\int_{B_{R_0}(0)}\rho_{\bar{\g}_a}dx\geq\eta>0,
\end{equation}
where $\rho_{\bar{\g}_a}:=\dsum_{i=1}^N|\bar w^a_i|^2$, and
\begin{equation}\label{tra}
\operatorname{Tr}(-\Delta\bar\gamma_{ a})\equiv 1,\quad\int_{\R^3}\rho_{\bar\gamma_a}^{\f 53}dx\to\f {1}{a_N^*}\ \ \text{as}\ \ a\nearrow a_N^*.
\end{equation}
						
\item[(3)] For any sequence $\{a_n\}$ satisfying $a_n\nearrow a_N^*$ as $n\to\infty$, there exists a subsequence, still denoted by $\{a_n\}$, of $\{a_n\}$ such that
\begin{equation}\label{zb}
\bar x_n:=\epsilon_{a_n}y_{\epsilon_{a_n}}\to \bar x_0 \ \ \text{as}\ \ n\to\infty,
\end{equation}
where $\bar x_0\in\R^3$ is a global minimum point of $V(x)$, $i.e.$,  $V(\bar x_0)=0$. Moreover, we have for $i=1,\cdots,N$,
\begin{equation}\label{sc}
\bar w_i^{a_n}:=\epsilon_{a_n}^{\f 32} u_i^{a_n}(\epsilon_{a_n} x+\epsilon_{a_n}y_{\epsilon_{a_n}})\to\bar w_i\  \text{ strongly in}\ H^1(\R^3)\  \text{ as }\  n\to\infty,
\end{equation}
where $\bar \g:=\sum_{i=1}^N| \bar w_i\rangle\langle \bar w_i|$ is an optimizer of $a_N^*$.
\end{enumerate}
\end{Lemma}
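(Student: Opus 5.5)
The plan is a standard blow-up argument driven by the energy estimates of Lemmas \ref{ree} and \ref{erho}, together with the concentration-compactness structure of optimizers for $a_N^*$.

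\textbf{Part (1).} I would argue by contradiction. Suppose $\epsilon_{a_n}\not\to0$ along some sequence $a_n\nearrow a_N^*$. Then $\operatorname{Tr}(-\Delta\g_{a_n})$ is bounded. Since $I_{a_n}(N)\to0$, \eqref{bounded} gives $\int V\rho_{\g_{a_n}}dx\to0$, so $\{u_i^{a_n}\}$ is bounded in $\H$. By Lemma \ref{ce}, a subsequence converges strongly in $L^q$, $2\le q<6$, to an orthonormal family $(u_1,\dots,u_N)$. Lower semicontinuity then gives $E_{a_N^*}(\g)\le\liminf E_{a_n}(\g_{a_n})=0=I_{a_N^*}(N)$. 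This makes $\g$ a minimizer of $I_{a_N^*}(N)$, contradicting Theorem \ref{groundstate}(2).

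\textbf{Part (2), normalization and identity \eqref{tra}.} Scaling gives $\operatorname{Tr}(-\Delta\bar\g_a)=1$ directly. From $0\le I_a(N)=\epsilon_a^{-2}\big[1-a\int\rho_{\bar\g_a}^{5/3}\big]+\int V\rho_{\g_a}$ I get $1-a\int\rho_{\bar\g_a}^{5/3}\le\epsilon_a^2 I_a(N)\to0$. Combined with the reverse inequality $\int\rho_{\bar\g_a}^{5/3}\le 1/a_N^*$ from \eqref{a*} (note $\|\bar\g_a\|=1$), this yields \eqref{tra}.

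\textbf{Part (2), non-vanishing \eqref{liminf}.} Note that $\rho_{\bar\g_a}$ is bounded in $L^1\cap L^3$, since each $\bar w_i^a$ is bounded in $H^1$. If $\sup_y\int_{B_R(y)}\rho_{\bar\g_a}dx\to0$ for every $R$, then Lions' vanishing lemma, applied to each $\bar w_i^a$ in $H^1$, forces $\int\rho_{\bar\g_a}^{5/3}\to0$. This contradicts \eqref{tra}. Hence there are $y_{\epsilon_a}$, $R_0$ and $\eta$ with \eqref{liminf}.

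\textbf{Part (3).} This is where I expect the main obstacle. First, the location of the concentration point. Using $\int V\rho_{\g_a}dx\le I_a(N)\to0$ and Fatou's lemma after the change of variables,
\[
\int_{B_{R_0}(0)}V(\epsilon_a x+\epsilon_a y_{\epsilon_a})\rho_{\bar\g_a}dx\to0 .
\]
Since $V\to\infty$ at infinity, $\bar x_n=\epsilon_{a_n}y_{\epsilon_{a_n}}$ stays bounded. Passing to a limit $\bar x_0$ and using \eqref{liminf} then gives $V(\bar x_0)=0$.

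Next, I pass to a weak limit $\bar w_i^{a_n}\rightharpoonup\bar w_i$ in $H^1$, with $\bar\g:=\sum_i|\bar w_i\rangle\langle\bar w_i|$. Then $0\le\bar\g\le1$ and $\int\rho_{\bar\g}\ge\eta$. The hard step is ruling out dichotomy, i.e. showing $\operatorname{Tr}\bar\g=N$ and getting strong convergence. I would use the strict binding inequality for the finite-rank Lieb--Thirring problem. Consider the functional $\operatorname{Tr}(-\Delta\g)-a_N^*\int\rho_\g^{5/3}$, which tends to $0$ along the sequence while the total kinetic energy equals $1$. Split $\bar\g_{a_n}$ into a part localized near the origin and a remainder going to infinity, and use the IMS localization formula. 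Both pieces have rank at most $N$ and satisfy the inequality \eqref{a*} with $\|\cdot\|\le1$. Optimality of the sum then forces each nonzero piece to be an optimizer of $a_N^*$ with zero energy defect. By the assumption that \emph{any} optimizer of $a_N^*$ is full-rank, an optimizer has rank exactly $N$, so a nontrivial splitting into two pieces of total rank $N$ is impossible; the strict inequality \eqref{in} is the quantitative form I would invoke. Therefore the remainder vanishes, $\rho_{\bar\g_{a_n}}\to\rho_{\bar\g}$ in $L^1\cap L^{5/3}$, and $\operatorname{Tr}\bar\g=N$. Orthonormality of the weak limits follows, and the chain
\[
1\ge\operatorname{Tr}(-\Delta\bar\g)\ge a_N^*\int\rho_{\bar\g}^{5/3}=1
\]
gives that $\bar\g$ is an optimizer and that the gradient norms converge. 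Weak convergence plus convergence of norms yields strong $H^1$ convergence \eqref{sc}.
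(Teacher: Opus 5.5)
Your Parts (1) and (2), and the identification $V(\bar x_0)=0$ in Part (3), are correct and essentially follow the paper. The gap is in the step that excludes dichotomy.

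After an IMS splitting, $\chi\bar\gamma_{a_n}\chi=\sum_i|\chi\bar w_i^{a_n}\rangle\langle\chi\bar w_i^{a_n}|$ and $\eta\bar\gamma_{a_n}\eta$ are not ``two pieces of total rank $N$''. Every orbital is cut in two, so each piece generically has rank $N$, and neither is a projection. What is additive is the trace, $\operatorname{Tr}(\chi\bar\gamma_{a_n}\chi)+\operatorname{Tr}(\eta\bar\gamma_{a_n}\eta)=N$, not the rank. Two rank-$N$ operators of norm $\le1$, each with asymptotically vanishing defect $\operatorname{Tr}(-\Delta\gamma_j)-a_N^*\int\rho_{\gamma_j}^{5/3}$, are perfectly compatible with \eqref{a*}. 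So your rank count gives no contradiction.

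For the same reason you cannot invoke a strict inequality $a_{N-1}^*>a_N^*$. (That inequality does follow from the full-rank hypothesis, whereas \eqref{in} itself is only stated for the integers $N_m$.) Using it would require a piece of rank $\le N-1$, which the localization does not produce. Moreover, ``each nonzero piece is an optimizer'' holds only asymptotically. The outer piece has no limit object to which the full-rank hypothesis could be applied.

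The paper closes this step on the dual side. The potentials $A_n=\frac53a_N^*\rho_{\bar\gamma_{a_n}}^{2/3}$ form a maximizing sequence for $L_N^*$. There, under dichotomy, the sum of the $N$ lowest eigenvalues genuinely splits as $s+(N-s)$ \cite[Lemma 17]{FGL24}. This forces $L_s^*=L_{N-s}^*=L_N^*$, contradicting $L_{2K}^*>L_K^*$ \cite[Theorem 4]{FGL24}. Full rank is used only afterwards, to get $\operatorname{Tr}\bar\gamma=N$.

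If you want to stay on the primal side, track the trace instead of the rank:
\begin{enumerate}
\item Write $\bar w_i^{a_n}=\bar w_i+v_i^n$. Split the kinetic energy by weak convergence, and split $\int\rho^{5/3}$ by the vector-valued Br\'ezis--Lieb lemma.
\item Observe that $r_n=\sum_i|v_i^n\rangle\langle v_i^n|$ has rank $\le N$ and norm $\le1+o(1)$, since its Gram matrix tends to $I-(\langle\bar w_i,\bar w_j\rangle)_{ij}$.
\item Apply \eqref{a*} to $\bar\gamma$ and to $r_n$. Since $\int\rho_{\bar\gamma}^{5/3}>0$, this forces $\|\bar\gamma\|=1$ and makes $\bar\gamma$ an optimizer.
\item Full rank and \eqref{gN} then give $\operatorname{Tr}\bar\gamma=N$, hence $v_i^n\to0$ in $L^2$.
\item Then $\int\rho_{r_n}^{5/3}\to0$ follows from $\|\rho\|_{5/3}\le\|\rho\|_1^{2/5}\|\rho\|_3^{3/5}$ and Sobolev.
\end{enumerate}
This yields $\int\rho_{\bar\gamma}^{5/3}=1/a_N^*$, after which your final chain and the strong $H^1$ convergence go through as you wrote them.
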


\noindent{\bf Proof.} (1). On the contrary, suppose that \eqref{epsilon0} is false. Applying Lemma \ref{ee}, it then follows from \eqref{a*} that there exists a sequence $\{a_n\}$, where $a_n\nearrow a_N^*$ as $n\to\infty$, such that  $\big\{u_i^{a_n}\big\}$ is bounded uniformly in $\mathcal{H}$ as $n\to\infty$. By Lemma \ref{ce}, we thus derive that there exist a subsequence, still denoted by $\{\g_{a_n}\}$, of $\{\g_{a_n}\}$ and $u_i^{0}\in\H$ such that for $i=1,\cdots,N$,
\begin{equation*}
u_i^{a_n}\rightharpoonup u_i^{0}\ \text{ weakly in }\ \H,\ \   u_i^{a_n}\to u_i^{0}\ \text{ strongly in }\ L^q(\R^3)\ \text{ as }\ n\to\infty,\ \,2\leq q<6,
\end{equation*}
which implies that
\begin{equation*}
\langle u_i^{0},u_i^{0}\rangle_{L^2(\R^3)}=\delta_{ij},\ \ i,j=1,\cdots,N,
\end{equation*}
and
\begin{equation*}
\rho_{\g_{a_n}}:=\sum_{i=1}^N|u_i^{a_n}|^2\to \rho_{\g_0}:=\sum_{i=1}^N|u_i^{0}|^2\ \ \hbox{strongly in}\ \ L^r(\R^3)\ \ \hbox{as}\ \ n\to\infty,\ \ 1\leq r<3,
\end{equation*}
together with $\g_0:=\sum_{i=1}^N|u_i^{0}\rangle\langle u_i^{0}|$.
By the weakly lower semi-continuity, one can then derive from Theorem \ref{groundstate} (2) and  Lemma \ref{ee} that
\begin{equation*}
0=I_{a_N^*}(N)\leq E_{a_N^*}(\g_0)\leq\lim_{n\to\infty}E_{a_n}(\g_{a_n})=\lim_{n\to\infty}I_{a_n}(N)=0.
\end{equation*}
This implies that $\g_0$ is a minimizer of $I_{a_N^*}(N)$, which is impossible in view of the fact that  $I_{a_N^*}(N)$ cannot be achieved. Therefore, $\epsilon_a\to0$ as $a\nearrow a_N^*$, and hence \eqref{epsilon0} holds true.
		
(2). Denote
\begin{equation}\label{3-10}
\check w_i^a:=\epsilon_a^{\f 32}u_i^a(\epsilon_a x),\ \ i=1,\cdots,N,\ \ \check \g_a:=\sum_{i=1}^N|\check w_i^a\rangle\langle\check  w_i^a|,
\end{equation}
where $\epsilon_a>0$ is defined by \eqref{et}. Applying Lemma \ref{ree},  we obtain from  \eqref{a*} and \eqref{et}
that
\begin{equation*}
0\leq\operatorname{Tr}(-\Delta \g_a)-a\int_{\R^3}\rho^{\f 53}_{\g_a}dx=\epsilon_a^{-2}-a\int_{\R^3}\rho^{\f 53}_{\g_a}dx\leq I_a(N)\to0\ \ \text{as}\ \ a\nearrow a_N^*,
\end{equation*}
which implies that
\begin{equation}\label{3-15}
a\epsilon_a^2\int_{\R^3}\rho^{\f 53}_{\g_a}dx\to 1\ \text{ as }\ a\nearrow a_N^*.
\end{equation}
One can then derive from \eqref{3-10} that
\begin{equation}\label{rn0}
\operatorname{Tr}(-\Delta \check \g_a)=1,\ \ \hbox{and}\ \
\int_{\R^3}\rho_{\check\gamma_a}^{\f 53}dx\to\frac{1}{a_N^*}\ \ \hbox{as}\ \ a\nearrow a_N^*,
\end{equation}
where $\rho_{\check\g_a}:=\sum_{i=1}^N|\check w_i^a|^2$. Moreover, we have
\begin{equation}\label{r53}
0<\f 1{2a_N^*}\leq\int_{\R^3}\rho_{\check\gamma_a}^{\f 53}dx\leq \f2{a_N^*}\ \text{ as }\ a\nearrow a_N^*.
\end{equation}

We now claim that  there exist a sequence $\{y_{\epsilon_a}\}\subset\R^3$ and constants  $R_0,\,\eta>0$ such that
\begin{equation}\label{liminf2}
\liminf_{a\nearrow a_N^*}\int_{B_{R_0}(y_{\epsilon_a})}\rho_{\check{\g}_a}dx\geq\eta>0.
\end{equation}
In fact, if \eqref{liminf2} is false, then for any $R>0$, there exists a sequence $\{a_n\}$ satisfying $a_n\nearrow a_N^*$ as $n\to\infty$ such that
\begin{equation*}
\lim_{n\to\infty}\sup_{y\in\R^3}\int_{B_{R}(y)}\rho_{\check{\g}_{a_n}}dx=0,
\end{equation*}
which implies that
\begin{equation*}
\lim_{n\to\infty}\sup_{y\in\R^3}\int_{B_{R}(y)}|\check w_i^{a_n}|^2dx=0\ \text{ for }\ i=1,\cdots,N.
\end{equation*}
We then obtain from \cite[Lemma 1.21]{Willem} that
\begin{equation*}
\check w^{a_n}_i\to0\ \ \text{strongly in }\ L^q(\R^3)\ \text{ as }\ n\to\infty\ \text{ for }\ 2\leq q< 6,\ \,i=1,\cdots,N,
\end{equation*}
which gives that
\begin{equation*}
\int_{\R^3}\rho_{\check\gamma_{a_n}}^{\f 53}dx\to 0\ \text{ as }\ n\to\infty.
\end{equation*}
This is however a contradiction in view of  \eqref{r53}.  We thus obtain that \eqref{liminf2} holds true, which further  gives \eqref{liminf}. Finally, we also note from \eqref{wia} and \eqref{3-15} that \eqref{tra} holds true.

(3). On the contrary,  suppose that \eqref{zb} is false. Then there exist a constant $\delta>0$ and a sequence  $\{a_n\}$ satisfying $a_n\nearrow a_N^*$ as $n\to\infty$ such that
\begin{equation*}
V(\epsilon_{a_n}y_{\epsilon_{a_n}})\geq \delta>0\ \ \hbox{as}\ \ n\to\infty.
\end{equation*}
We then derive from \eqref{liminf} that
\begin{equation}\label{vc}
\begin{split}
&\lim_{n\to\infty}\int_{\R^3} V(\epsilon_{a_n}x+\epsilon_{a_n}y_{\epsilon_{a_n}})\rho_{\bar\g_{ a_n}}dx\\
\geq&\lim_{n\to\infty}\int_{B_{R_0}(0)}V(\epsilon_{a_n}x+	\epsilon_{a_n}y_{\epsilon_{a_n}})\rho_{\bar{\g}_{a_n}}dx\\
\geq& \f{\delta}{2}\lim_{n\to\infty}\int_{B_{R_0}(0)}\rho_{\bar{\g}_{a_n}}dx\geq\f{\delta}{2}\eta.
\end{split}
\end{equation}
However,  it follows from \eqref{a*}, \eqref{eV} and Lemma \ref{ree} that
\begin{equation*}
0\leq \int_{\R^3}V(\epsilon_{a_n}x+\epsilon_{a_n}y_{\epsilon_{a_n}})\rho_{\bar{\g}_{a_n}}dx=\int_{\R^3}V(x)\rho_{\g_{ a_n}}dx\leq I_{a_n}(N)\to 0\ \text{ as }\ n\to\infty,
\end{equation*}
which contradicts with \eqref{vc}. Hence, \eqref{zb} holds true.

In the following, we focus on the proof of \eqref{sc}. Let $\{a_n\}$ be the convergent subsequence given by \eqref{zb}. We then obtain   from \eqref{et} and \eqref{wia} that $\{\bar w_i^{a_n}\}$ is bounded uniformly in $H^1(\R^3)$, and  hence up to a subsequence if necessary,  there exists a function $\bar w_i\in H^1(\R^3)$ such that for $i=1,\cdots,N$,
\begin{equation}\label{weak}
\bar w_i^{a_n}\rightharpoonup \bar  w_i \ \text{ weakly in }\ H^1(\R^3)\ \text{ as }\ n\to\infty,
\end{equation}
and
\begin{equation*}
\bar w_i^{a_n}\to\bar  w_i\  \text{ strongly in }\ L_{loc}^p(\R^3)\  \text{ as }\ n\to\infty,\ \,2\leq p<6.
\end{equation*}
Denoting $\bar\g:=\sum_{i=1}^N|\bar w_i\rangle\langle \bar w_i|$, it then follows from \eqref{liminf} that
\begin{equation*}
\rho_{\bar\g}:=\sum_{i=1}^N|\bar w_i|^2\not\equiv0.
\end{equation*}
The rest of the proof can be divided into the following three steps.

\vspace {.1cm}	

 \textit{Step 1.} We first claim that
\begin{equation}\label{An}
\{A_n\}:=\l\{\f53 a_N^*\rho_{\bar\g_{{a_n}}}^{\f23}\r\} \text{ is a maximizing sequence of the best constant } L_N^*,
\end{equation}
where $\rho_{\bar\g_{a_n}}:=\sum_{i=1}^N|\bar w_i^{a_n}|^2$, and
\begin{equation}\label{3-4}
L_N^*:=\sup_{0\leq A\in L^{\f52}(\R^3)\backslash\{0\}}\f{\sum_{j=1}^N|\lambda_j(-\Delta-A)|}{\int_{\R^3} A^{\f52}(x)dx}
\end{equation}
is attainable (cf. \cite[Corollary 2]{FGL24}). Here $\lambda_j(-\Delta-A)$ denotes the $j$-th min-max level of $-\Delta-A$ in $L^2(\R^3)$, which equals to the $j$-th negative eigenvalue of $-\Delta-A$ in $L^2(\R^3)$ if it exists, and zero otherwise.

By the  min-max principle \cite[Theorem 12.1]{Lieb01}, one can deduce from \eqref{tra} that
\begin{equation}\label{la}
\begin{split}
\sum_{j=1}^N\lambda_j(-\Delta-A_n)&\leq \sum_{j=1}^N\Big\langle(-\Delta-A_n)\bar w_j^{a_n},\bar w_j^{a_n}\Big\rangle\\
&=\dsum_{j=1}^N\int_{\R^3}\Big(\big|\nabla \bar w_j^{a_n}\big|^2-\f53 a_N^*\rho_{\bar{\g}_{a_n}}^{\f 23}\big|\bar w_j^{a_n}\big|^2\Big)dx\\
&=\operatorname{Tr}(-\Delta \bar{\g}_{a_n})-\f 53 {a_N^*}\int_{\R^3}\rho_{\bar{\g}_{a_n}}^{\f53}dx\\
&=-\f23+o(1)\ \text{ as }\ n\to\infty,
\end{split}
\end{equation}
where $A_n$ is defined by \eqref{An}. Recall from \cite[Lemma 6]{FGL21} that
\begin{equation}\label{al}
a_N^*(L_N^*)^{\f23}=\f35\l(\f25\r)^{\f23}.
\end{equation}
We then obtain from \eqref{tra}, \eqref{3-4} and \eqref{la} that
\begin{equation}\label{c2}
\begin{split}
L_N^*\geq\f{\sum_{j=1}^N|\lambda_j(-\Delta-A_n)|}{\int_{\R^3} A_n^{\f52}(x)dx}
&=\l(\f53 a_N^*\r)^{-\f52}\f{\sum_{j=1}^N|\lambda_j(-\Delta-A_n)|}{\int_{\R^3}\rho_{\bar\g_{a_n}}^{\f53} dx}\\
&\geq \l(\f53 a_N^*\r)^{-\f52}\f{\f23}{\f{1}{a_N^*}}+o(1)\\
&=L_N^*+o(1)\ \text{ as }\ n\to\infty,
\end{split}
\end{equation}
which implies that  $\{A_n\}$  is a maximizing sequence of the best constant  $L_N^*$, and hence the claim \eqref{An} holds true.

\vspace {.1cm}	
\textit{Step 2.} The purpose of this step is to prove that
\begin{equation}\label{l53}
\rho_{{\bar{\g}_{a_n}}}\to\rho_{\bar\g}\  \text{ strongly in }\  L^{\f53}(\R^3)\  \text{ as }\  n\to\infty.
\end{equation}
Applying Br\'ezis-Lieb Lemma \cite{Willem}, it suffices to prove that $\int_{\R^3}\rho_{\bar \g}^{\f53}dx=\f 1{a_N^*}$ in view of \eqref{tra}.
		
On the contrary,  suppose that
\begin{equation*}
\lim_{n\to\infty}\int_{\R^3}\rho_{\bar{\g}_{a_n}}^{\f53}dx=\f{1}{a_N^*}>\al:=\int_{\R^3}\rho_{\bar \g}^{\f53}dx.
\end{equation*}
Since it follows from \eqref{liminf} that $\al>0$, we  obtain from the classical dichotomy    that there exist a subsequence, still denoted by $\{\rho_{\bar{\g}_{a_n}}\}$, of $\{\rho_{\bar{\g}_{a_n}}\}$ and a sequence $\{R_n\}$ satisfying $R_n\to\infty$ as $n\to\infty$ such that
\begin{equation}\label{3-5}
0<\lim_{n\to\infty}\int_{|x|<R_n}\rho_{\bar\g_{{a_n}}}^{\f53} dx=\al\ \text{ and }\ \lim_{n\to\infty}\int_{R_n<|x|<2R_n}\rho_{\bar\g_{{a_n}}}^{\f53} dx=0.
\end{equation}
One can then derive from  \cite[Lemma 17]{FGL24} that there exists some $s\in\{0,\,1,\,\dots,N\}$ such that
\begin{equation}\label{2fr}
\begin{split}
\sum_{j=1}^N|\lambda_j(-\Delta-A_n)|=&\sum_{j=1}^s|\lambda_j(-\Delta-A_n\mathds{1}_{B_{R_n}})|\\
&+\sum_{j=1}^{N-s}|\lambda_j(-\Delta-A_n\mathds{1}_{\R^3\backslash B_{2R_n}})|+o(1) \ \ \mbox{as} \ \ n\to\infty,
\end{split}
\end{equation}
where $A_n$ is as in \eqref{An}.		
		
Recall from \eqref{frlt} that the best constant $L_s^*$ of the finite-rank Lieb-Thirring inequality is defined as
\begin{equation*}
L_s^*:=\sup_{0\leq A\in L^{\f52}(\R^3)\backslash\{0\}}\f{{\sum_{j=1}^s\big|\lambda_j\big(-\Delta-A(x)\big)\big|}}{{\int_{\R^3} A^{\f52}(x)dx}},\ \,\forall
 s\in\N^+\cup \{0\},
\end{equation*}
where $s=0$ means that $L_s^*=0$.
It is clear that $L_s^*$ is increasing with respect to the nonnegative integer $s$. We thus conclude from \eqref{tra}, \eqref{An}, \eqref{c2}, \eqref{3-5} and \eqref{2fr} that for some $s\in\{0,1,\dots,N\}$,
\begin{equation}\label{alc}
\begin{split}
&\ \ \l(\f53 a_N^*\r)^{\f52}\f{1}{a_N^*} L_N^*\\
&=L_N^*\lim_{n\to\infty}\int_{\R^3}A_n^{\f52}dx\\
&=\lim_{n\to\infty} \sum_{j=1}^N\l|\lambda_j(-\Delta-A_n)\r|\\
&=\lim_{n\to\infty}\sum_{j=1}^s\big|\lambda_j(-\Delta-A_n\mathds{1}_{B_{R_n}})\big|+\lim_{n\to\infty}\sum_{j=1}^{N-s}\big|\lambda_j(-\Delta-A_n\mathds{1}_{\R^3\backslash B_{2R_n}})\big|\\
&\leq L_s^*\lim_{n\to\infty}\int_{\R^3}(A_n\mathds{1}_{B_{R_n}})^{\f52}dx+L_{N-s}^*\lim_{n\to\infty}\int_{\R^3}(A_n\mathds{1}_{\R^3\backslash B_{2R_n}})^{\f52}dx\\
&=\Big(\f53 a_N^*\Big)^{\f52}\Big[L_s^*\al+L_{N-s}^*\Big(\f{1}{a_N^*}-\al\Big)\Big]\leq \Big(\f53 a_N^*\Big)^{\f52}\f{1}{a_N^*} L_N^*,
\end{split}
\end{equation}
where we have used the fact that $L_s^*$ is increasing with respect to the nonnegative integer  $s\in\{0,1,\dots,N\}$. Since $\f{1}{a_N^*}>\al$,  one can deduce from \eqref{alc} that $L_s^*=L_{N-s}^*=L_N^*$.
		
On the other hand, since $(N-s)+s=N$, we have either $2(N-s)\leq N$ or $2s\leq N$. Recalling from \cite[Theorem 4]{FGL24} that $L_{2N}^*>L_{N}^*$ for all $N\in\N^+$, it then follows from the monotonicity of $L_s^*$ that either
$$L^*_{N-s}<L^*_{2(N-s)}\leq L^*_N,$$
or
$$L^*_s<L^*_{2s}\leq L^*_N,$$
a contradiction. We thus conclude that $	 \int_{\R^3}\rho_{\bar\g}^{\f53}dx=\lim_{n\to\infty}\int_{\R^3}\rho_{\bar\g_{{a_n}}}^{\f53}dx=\f{1}{a_N^*}$, which implies that \eqref{l53} holds true.
		
\vspace {.1cm}	

\textit{Step 3.} Following Fatou's Lemma, we obtain from \eqref{a*},  \eqref{tra}, \eqref{weak} and \eqref{l53} that
\begin{equation*}
\begin{split}
1&=a_N^*\lim_{n\to\infty}\int_{\R^3}\rho_{\bar\g_{{a_n}}}^{\f53}dx=\lim_{n\to\infty}\operatorname{Tr}(-\Delta \bar{\g}_{a_n})\\
&\geq \operatorname{Tr}(-\Delta \bar \g)\geq a_N^*\|\bar \g\|^{-\f23}\int_{\R^3}\rho_{\bar \g}^{\f53}dx\geq1,
\end{split}
\end{equation*}
where we have used the fact that $\|\bar \g\|\leq\ds\liminf_{n\to\infty}\|\bar{\g}_{a_n}\|=1 $. We thus conclude that $\|\bar \g\|=1$, and $\bar \g$ is an optimizer of $a_N^*$.
Moreover, we also have
\begin{equation}\label{nstr}
\n \bar w_i^{a_n}\to\n \bar w_i\ \text{ strongly in }\ L^2{(\R^3)}\  \text{ as }\ n\to\infty,\ \,i=1,\cdots,N.
\end{equation}	
Since Rank$(\bar \g)=N$, we  obtain from \eqref{gN} that
\begin{equation*}
\int_{\R^3}\rho_{\bar \g} dx=\|\bar \g\|\text{Rank}(\bar \g)=N.
\end{equation*}
One can then  deduce from \eqref{weak} and the Br\'ezis-Lieb lemma that
\begin{equation*}
\rho_{\bar{\g}_{a_n}}\to\rho_{\bar\g}\ \text{ strongly in}\ L^1(\R^3)\ \text{ as }\ n\to\infty,
\end{equation*}
which further implies that
\begin{equation}\label{w2}
\bar w_i^{a_n}\to \bar w_i\ \text{ strongly in}\ L^2(\R^3)\ \text{ as }\ n\to\infty,\ \,i=1,\cdots,N.
\end{equation}
We conclude from  \eqref{nstr} and \eqref{w2} that \eqref{sc} holds true. This therefore completes the proof of Lemma \ref{3.3L}. \qed

\begin{Lemma}\label{Vll}
Under the assumptions of Lemma \ref{3.3L}, let $\{a_n\}$ be the subsequence obtained in Lemma \ref{3.3L} (3). Then for any $R>0$, there exists a constant $C_0(R)>0$, independent of $a_n$, such that
\begin{equation}\label{l3.5}
\lim_{n\to\infty}\f{1}{\epsilon_{a_n}^2}\int_{B_R(0)}V(\epsilon_{a_n}x+\epsilon_{a_n}y_{\epsilon_{a_n}} )\rho_{\bar\g_{a_n}}dx\geq C_0(R),
\end{equation}
where $\epsilon_{a_n}>0$ and $\bar\g_{a_n}$ are defined by \eqref{et} and \eqref{wia}, respectively.
\end{Lemma}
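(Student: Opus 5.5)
The idea is to bound the rescaled potential from below by the squared distance to the rescaled ring, and then pass to the limit using the strong convergence of densities from Lemma \ref{3.3L} (3).

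\textbf{Step 1: pointwise lower bound on $V$.} Let $Z:=\{y=(p,y_3)\in\R^2\times\R:\ |p|=A,\ y_3=0\}$ be the zero set of $V$. For $y=(p,y_3)$ we have
$$\operatorname{dist}(y,Z)^2=(|p|-A)^2+y_3^2 .$$
Hence, with $c_0:=\min\{\omega_1,\omega_2\}>0$,
$$V(y)\ge c_0\operatorname{dist}(y,Z)^2\quad\text{for all } y\in\R^3 .$$
Write $\epsilon_n:=\epsilon_{a_n}$ and $\bar x_n:=\epsilon_n y_{\epsilon_n}$. Then
$$\f{1}{\epsilon_n^2}V(\epsilon_n x+\bar x_n)\ \ge\ c_0\operatorname{dist}(x,Z_n)^2,\qquad Z_n:=\f{Z-\bar x_n}{\epsilon_n}.$$
Here $Z_n$ is a horizontal circle of radius $A/\epsilon_n\to\infty$, since $\epsilon_n\to0$ by \eqref{epsilon0}. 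It therefore suffices to bound $\liminf_n\int_{B_R(0)}\operatorname{dist}(x,Z_n)^2\rho_{\bar\g_{a_n}}dx$ from below.

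\textbf{Step 2: limit of the distance functions.} Set $f_n(x):=\min\{\operatorname{dist}(x,Z_n)^2,\,R^2\}$ on $\overline{B_R(0)}$. These functions are uniformly bounded and uniformly Lipschitz there. Pass to a further subsequence such that $Z_n\cap\overline{B_{3R}(0)}$ converges in the Hausdorff sense to a closed set $Z_\infty$. Because the radii blow up, the circle arcs inside $B_{3R}$ deviate from their tangent lines by $O(\epsilon_n R^2)$. Consequently $Z_\infty$ is either empty or contained in an affine line $L$, and $f_n\to f_\infty$ uniformly on $B_R$, where
\begin{itemize}
\item $f_\infty\ge\min\{\operatorname{dist}(x,L)^2,R^2\}$ when $Z_\infty\subset L$;
\item $f_\infty\ge \min\{(2R)^2,R^2\}=R^2$ on $B_R$ when $Z_\infty=\emptyset$, since then $\operatorname{dist}(x,Z_n)\ge 2R$ eventually.
\end{itemize}
By \eqref{sc}, $\rho_{\bar\g_{a_n}}\to\rho_{\bar\g}$ strongly in $L^1(\R^3)$, so
$$\liminf_{n\to\infty}\int_{B_R}\operatorname{dist}(x,Z_n)^2\rho_{\bar\g_{a_n}}dx\ \ge\ \int_{B_R}f_\infty\,\rho_{\bar\g}\,dx .$$

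\textbf{Step 3: a uniform positive lower bound.} It remains to show that
$$c(R):=\inf_{L\ \text{affine line}}\int_{B_R}\min\{\operatorname{dist}(x,L)^2,R^2\}\,\rho_{\bar\g}\,dx\ >\ 0,$$
which handles both cases in Step 2. Then $C_0(R):=c_0\min\{c(R),\,R^2\int_{B_R}\rho_{\bar\g}dx\}$ works. Note that $C_0(R)$ depends only on $\bar\g$, which is fixed by the subsequence of Lemma \ref{3.3L} (3), and not on $n$. The proof of $c(R)>0$ goes as follows.
\begin{enumerate}
\item $\bar\g$ is an optimizer of $a_N^*$, so $\bar w_1$ is a first eigenfunction of $-\Delta-\f53a_N^*\rho_{\bar\g}^{2/3}$. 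As in the proof of Theorem \ref{groundstate} (2), via \cite[Theorem 11.8]{Lieb01}, this gives $\rho_{\bar\g}>0$ in $\R^3$.
\item For each line $L$ the integrand is positive off the null set $L$, so each integral is positive.
\item Lines missing $B_{2R}$ give integrand $\ge R^2$ on $B_R$, hence a value $\ge R^2\int_{B_R}\rho_{\bar\g}dx>0$.
\item Lines meeting $\overline{B_{2R}}$ form a compact family, parametrized by a point in $\overline{B_{2R}}$ and a direction in $S^2$. The integral depends continuously on $L$ by dominated convergence, so its minimum over this family is attained and positive.
\end{enumerate}

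\textbf{Main obstacle.} The delicate point is Step 2: the rescaled ring $Z_n$ may approach $B_R$ in an uncontrolled way, possibly from far away. I would handle this exactly by the Hausdorff-compactness argument above together with the truncation at $R^2$. That truncation is what makes the bound uniform in the position of the ring, independent of the subsequence chosen.
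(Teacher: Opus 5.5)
Your proof is correct, but it takes a genuinely different route from the paper's. The paper first weakens $V$ to $c(|x|-A)^2$, the squared distance to the sphere $|x|=A$. It then Taylor-expands $|x+y_{\epsilon_n}|-A/\epsilon_n$ on $B_R(0)$ as $\sqrt{|x|^2+|y_{\epsilon_n}|^2}-A/\epsilon_n+x\cdot y_{\epsilon_n}/\sqrt{|x|^2+|y_{\epsilon_n}|^2}+O(|y_{\epsilon_n}|^{-1})$. Next it extracts a subsequence according to whether the region where the constant part is $\le 0$, or the region where it is $>0$, fills at least half of $B_R(0)$. Finally it picks an explicit sector of an annulus on which the linear term has the same sign as the constant part. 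This yields $C_0(R)$ as an explicit multiple of $\int_{\Omega_j}|x|^2\rho_{\bar\gamma}\,dx$. However, it relies on $\epsilon_n y_{\epsilon_n}\to\bar x_0$ with $|\bar x_0|=A>0$, both to get $|y_{\epsilon_n}|\to\infty$ and to orient the sectors along $y_{\epsilon_n}$.

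You instead keep the sharper ring distance and use only $\epsilon_n\to 0$, so that the rescaled circle flattens on $\overline{B_{3R}(0)}$. You then replace the sector bookkeeping by compactness. Hausdorff limits of the arcs lie on a line. The truncation at $R^2$ localizes everything to $\overline{B_{3R}(0)}$. Continuity over the compact family of lines meeting $\overline{B_{2R}(0)}$ then gives $c(R)>0$. Your constant is not explicit, but it is uniform in the position of the ring. Hence the sub-subsequence argument gives the $\liminf$ bound along the whole subsequence with no case distinction; the paper, by contrast, has to pass to a further subsequence to fix its case.

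You also make explicit the positivity of $\rho_{\bar\gamma}$, which the paper uses tacitly when it asserts $\int_{\Omega_j}|x|^2\rho_{\bar\gamma}\,dx>0$. Strictly speaking, it is the range of $\bar\gamma$ that contains the positive ground state $Q_1$ of $-\Delta-\frac{5}{3}a_N^*\rho_{\bar\gamma}^{2/3}$; this need not be $\bar w_1$ itself. Since $\rho_{\bar\gamma}$ does not depend on the choice of orthonormal basis of the range, this gives $\rho_{\bar\gamma}\ge|Q_1|^2>0$, which is all you need.
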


\noindent{\bf Proof.}	By the definition of $V(x)$ in \eqref{eV}, we get that for $0<c\leq \min\{\omega_1,\omega_2\}$,
\begin{equation*}
V(x)-c(|x|-A)^2
=(\omega_1-c)(r-A)^2+(\omega_2-c)x_3^2+2Ac(|x|-r)\geq0,
\end{equation*}
where $x=(x_1,x_2,x_3)\in \R^3$,  and $r=\sqrt{x_1^2+x_2^2}$. Thus, it suffices to prove that for any $R>0$, there exists a constant $C_0(R)>0$, independent of $a_n>0$, such that
\begin{equation}\label{Vl}
\lim_{n\to\infty}\f{1}{\epsilon_{n}^2}\int_{B_R(0)}(|\epsilon_{n}x+\epsilon_{n}y_{\epsilon_{n}} |-A)^2\rho_{\bar\g_{a_n}}dx\geq C_0(R),
\end{equation}
where $\epsilon_n:=\epsilon_{a_n}>0$, and $A>0$ is as in \eqref{eV}.
		
Note that
\begin{equation*}
\begin{split}
(|\epsilon_{n}x+\epsilon_{n}y_{\epsilon_{n}}|-A)^2&=\epsilon_{n}^2\left(|x+y_{\epsilon_{n}}|-\f{A}{\epsilon_{n}}\right)^2\\
&=\epsilon_{n}^2\Big(\sqrt{|x|^2+|y_{\epsilon_{n}}|^2}\sqrt{1+\f{2x\cdot y_{\epsilon_{n}}}{|x|^2+|y_{\epsilon_{n}}|^2}}-\f{A}{\epsilon_{n}}\Big)^2.
\end{split}
\end{equation*}
Since $\lim_{n\to\infty}\epsilon_{n}y_{\epsilon_{n}}=\bar x_0$ holds for $|\bar x_0|=A>0$, we obtain that $\lim_{n\to\infty}|y_{\epsilon_{n}}|=\infty$. This implies that for any given $R>0$,
$ \ds\lim_{n\to\infty}\f{2x\cdot y_{\epsilon_{n}}}{|x|^2+|y_{\epsilon_{n}}|^2}=0$ holds uniformly for $x\in B_R(0)$. By Taylor's expansion, we then derive that for all $x\in B_R(0)$,
\begin{equation}\label{epV}
\begin{split}
&\f{1}{\epsilon_{n}^2}\big(|\epsilon_{n}x+\epsilon_{n}y_{\epsilon_{n}}|-A\big)^2\\
=&\Big(\sqrt{|x|^2+|y_{\epsilon_{n}}|^2} \sqrt{1+\f{2x\cdot y_{\epsilon_{n}}}{|x|^2+|y_{\epsilon_{n}}|^2}}-\f{A}{\epsilon_{n}}\Big)^2\\
=&\Big[\sqrt{|x|^2+|y_{\epsilon_{n}}|^2}\l(	1+\f{x\cdot y_{\epsilon_{n}}}{|x|^2+|y_{\epsilon_{n}}|^2}+O\big(\f{1}{|y_{\epsilon_{n}}|^2}\Big)\r)-\f{A}{\epsilon_{n}}\Big]^2
\\
=&\Big[\sqrt{|x|^2+|y_{\epsilon_{n}}|^2}+\f{x\cdot y_{\epsilon_{n}}}{\sqrt{|x|^2+|y_{\epsilon_{n}}|^2}}-\f{A}{\epsilon_{n}}+O\Big(\f{1}{|y_{\epsilon_{n}}|}\Big)\Big]^2\text{ \ as \ }n\to\infty.
\end{split}
\end{equation}
For any $x\in\R^3$, the spherical coordinate of $x$ is given by
\begin{equation}
x=(r'\cos\phi,r'\sin\phi \cos\theta,r'\sin\phi \sin\theta)\in\R^3,
\end{equation}
where $r'=|x|\geq0,\ \phi\in[0,\pi]$ and $\theta\in[0,2\pi]$. We denote  $\arg x$ as the angle between $x$ and the positive $x$-axis (namely, $\arg x=\phi$), and $\l<x,y\r>$ as the angle between the vectors $x$ and $y$. Without loss of generality,   assuming that $x_0=(A,0,0)$, it then follows that $\arg y_{\epsilon_{n}}\to0$ as $n\to\infty$. One can thus choose $0<\delta<\f{\pi}{20}$  small enough that
\begin{equation}\label{delta}
0\leq\arg y_{\epsilon_{n}}<\delta\ \text{ as }\ n\to\infty.
\end{equation}
		
Denote for above given $R>0$,
\begin{equation*}
\begin{split}
\Omega_{n}^1 &=\left\{x \in B_{R}(0): \sqrt{|x|^{2}+\left|y_{\epsilon_{n}}\right|^{2}} \leq \frac{A}{\epsilon_{n}}\right\} \\
&=\left\{x \in B_{R}(0):|x|^{2}\leq\Big(\frac{A}{\epsilon_{n}}\Big)^{2}-\left|y_{\epsilon_{n}}\right|^{2}\right\},
\end{split}
\end{equation*}
and
\begin{equation*}
\begin{split}
\Omega_{n}^{2} &=\left\{x \in B_{R}(0): \sqrt{|x|^{2}+\left|y_{\epsilon_{n}}\right|^{2}}>\frac{A}{\epsilon_{n}}\right\}\\
&=\left\{x \in B_{R}(0):\Big(\frac{A}{\epsilon_{n}}\Big)^{2}-\left|y_{\epsilon_{n}}\right|^{2}<|x|^{2}<R^{2}\right\},
\end{split}
\end{equation*}
so that $B_{R}(0)=\Omega_{n}^{1} \cup \Omega_{n}^{2}$ and $\Omega_{n}^{1} \cap \Omega_{n}^{2}=\emptyset$. Since
\begin{equation*}
\left|\Omega_{n}^{1}\right|+\left|\Omega_{n}^{2}\right|=\left|B_{R}(0)\right|=\f43\pi R^{3},
\end{equation*}
there exists a subsequence, still denoted by $\left\{\epsilon_{n}\right\}$, of $\left\{\epsilon_{n}\right\}$ such that
\begin{equation*}
\text { either }\left|\Omega_{n}^{1}\right| \geq \frac{2\pi R^{3}}{3} \text { or }\left|\Omega_{n}^{2}\right| \geq \frac{2\pi R^{3}}{3}.
\end{equation*}
We now consider separately the following two cases for above given $R>0$:

\vspace {.1cm}

\textbf{Case 1.} $\big|\Omega_{n}^{1}\big| \geq \frac{2\pi R^{3}}{3}$. In this case, we have $B_{\frac{R}{\sqrt[3]2}}(0) \subset \Omega_{n}^{1}$. Define
\begin{equation*}
\Omega_{1}:=\left(B_{\frac{R}{\sqrt[3]2}}(0) \backslash B_{\frac{R}{2}}(0)\right) \cap\left\{x: \frac{\pi}{2}+2 \delta<\arg x<\pi\right\} \subset \Omega_{n}^{1},
\end{equation*}
so that
\begin{equation*}
\begin{split}
|\Omega_{1}|&=\int_0^{2\pi}\int_{\frac{R}{2}}^{\frac{R}{\sqrt[3]2}}\int_{\frac{\pi}{2}+2 \delta}^{\pi}(r')^2\sin\phi \,dr'd\phi \,d\theta\\
&=\f{\pi R^3}{4}\big[1-\sin(2\delta)\big]>\f{\pi R^3}{4}\Big(1-\sin\f\pi{10}\Big).
\end{split}
\end{equation*}
It then follows from \eqref{delta} that for any $x\in\Omega_{1}$,
\begin{equation*}
x\cdot y_{\epsilon_n}=|x||y_{\epsilon_n}|\cos\l<x,y_{\epsilon_n}\r><0\ \text{ and }\ |\cos\l<x,y_{\epsilon_n}\r>|>-\cos\left(\f\pi 2+\delta\right)>0,
\end{equation*}
which implies   that  for $x\in\Omega_1$,
\begin{equation}\label{3-6}
\begin{split}
&\sqrt{|x|^2+|y_{\epsilon_n}|^2}+\f{x\cdot y_{\epsilon_n}}{\sqrt{|x|^2+|y_{\epsilon_n}|^2}}-\f{A}{\epsilon_n}+O\Big(\f{1}{|y_{\epsilon_n}|}\Big)\\
\leq&\f{x\cdot y_{\epsilon_n}}{\sqrt{|x|^2+|y_{\epsilon_n}|^2}}+O\Big(\f{1}{|y_{\epsilon_n}|}\Big)\\
\leq&\f{x\cdot y_{\epsilon_n}}{2\sqrt{|x|^2+|y_{\epsilon_n}|^2}}\leq\f{|x||y_{\epsilon_n}|\cos(\f{\pi}{2}+\delta)}{2\sqrt{|x|^2+|y_{\epsilon_n}|^2}}<0\ \ \text{as}\ \ n\to\infty.
\end{split}
\end{equation}
Since $\lim_{n\to\infty}|y_{\epsilon_n}|=\infty$, we obtain from \eqref{epV} and \eqref{3-6} that for $x\in\Omega_1$,
\begin{equation*}
\f{1}{\epsilon_n^2}	\big(|\epsilon_nx+\epsilon_ny_{\epsilon_n}|-A\big)^2\geq \f{|x|^2\cos^2(\f{\pi}{2}+\delta)}{8}\ \ \hbox{as}\ \ n\to\infty.
\end{equation*}
Taking $\delta=\f{\pi}{30}$, it  then follows from \eqref{sc}  that
\begin{equation*}
\begin{split}
&\lim_{n\to\infty}\f{1}{\epsilon_n^2}\int_{B_R(0)}	 (|\epsilon_nx+\epsilon_ny_{\epsilon_n}|-A)^2\rho_{\bar\g_{a_n}}dx\\
\geq &\lim_{n\to\infty}\f{1}{\epsilon_n^2}\int_{\Omega_{1}}	 (|\epsilon_nx+\epsilon_ny_{\epsilon_n}|-A)^2\rho_{\bar\g_{a_n}}dx\\
\geq &\f{\cos^2\f{8\pi}{15}}{8}\int_{\Omega_{1}}|x|^2\rho_{\bar\g} dx:=C_0(R)>0,
\end{split}
\end{equation*}
where $\rho_{\bar\g}=\sum_{i=1}^N|\bar w_i|^2$. This thus proves \eqref{Vl}.

\vspace {.1cm}
\textbf{Case 2.} $\left|\Omega_{n}^{2}\right| \geq \frac{2\pi R^{3}}{3}$. In this case, we have
$$D_{R}:=\Big(B_{R}(0) \backslash B_{\f{R}{\sqrt[3]2}}(0)\Big) \subset \Omega_{n}^{2}.$$
Setting  $\Omega_{2}:=D_{R} \cap\left\{x: 0<\arg x<\frac{ \pi}{2}-2 \delta\right\}\subset \Omega_{n}^{2}$, we  then obtain that
\begin{equation*}
\left|\Omega_{2}\right|=\f{\pi R^3}{3}\big[1-\sin(2\delta)\big]\geq\f{\pi R^3}{3}\l(1-\sin\f\pi{10}\r).
\end{equation*}
The similar argument of  Case 1 yields that for $x\in\Omega_{2}$,
\begin{equation*}
x\cdot y_{\epsilon_n}=|x||y_{\epsilon_n}|\cos\l<x,y_{\epsilon_n}\r>>0\ \text{ and }\ \cos\l<x,y_{\epsilon_n}\r>>\cos\left(\f\pi 2-\delta\right)>0,
\end{equation*}
which implies that for $x\in\Omega_2$,
\begin{equation*}
\begin{split}
&\sqrt{|x|^2+|y_{\epsilon_n}|^2}+\f{x\cdot y_{\epsilon_n}}{\sqrt{|x|^2+|y_{\epsilon_n}|^2}}-\f{A}{\epsilon_n}+O\Big(\f{1}{|y_{\epsilon_n}|}\Big)\\
\geq&\f{x\cdot y_{\epsilon_n}}{\sqrt{|x|^2+|y_{\epsilon_n}|^2}}+O\Big(\f{1}{|y_{\epsilon_n}|}\Big)\\
\geq&\f{x\cdot y_{\epsilon_n}}{2\sqrt{|x|^2+|y_{\epsilon_n}|^2}}\geq\f{|x| |y_{\epsilon_n}|\cos(\f{\pi}{2}-\delta)}{2\sqrt{|x|^2+|y_{\epsilon_n}|^2}}>0\ \ \hbox{as}\ \ n\to\infty.
\end{split}
\end{equation*}
Taking $\delta=\f{\pi}{30}$, we thus obtain from \eqref{sc} and \eqref{epV} that
\begin{equation*}
\begin{split}
&\lim_{n\to\infty}\f{1}{\epsilon_n^2}\int_{B_R(0)}(|\epsilon_nx+\epsilon_ny_{\epsilon_n}|-A)^2\rho_{\bar\g_{a_n}}dx\\
\geq&\lim_{n\to\infty}\f{1}{\epsilon_n^2}\int_{\Omega_{2}}(|\epsilon_nx+\epsilon_ny_{\epsilon_n}|-A)^2\rho_{\bar\g_{a_n}}dx\\
\geq&\f{\cos^2\f{7\pi}{15}}{8}\int_{\Omega_{2}}|x|^2\rho_{\bar\g} dx:=C_0(R)>0,
\end{split}
\end{equation*}
where $\rho_{\bar\g}=\sum_{i=1}^N|\bar w_i|^2$. This also proves \eqref{Vl}, and the proof of Lemma \ref{Vll} is therefore complete.\qed
\vspace{5pt}

Applying Lemma \ref{Vll}, we are now ready to complete the proof of Theorem \ref{ee}.
\vspace{5pt}

\noindent{\bf Proof of Theorem 3.1.}
By Lemma \ref{ree}, it remains to prove that there exists a positive constant $C_1>0$, independent of $a>0$, such that
\begin{equation}\label{rel}
C_1(a_N^*-a)^{\f 12}\leq I_a(N)\ \text{ as }\ a\nearrow a_N^*.
\end{equation}
Indeed,   the proof of Lemma \ref{3.3L} (3) gives that for any sequence $\{\bar \g_{a_n}\}$ satisfying $a_n\nearrow a_N^*$ as $n\to\infty$, there exists a  subsequence, still denoted by $\{\bar \g_{a_n}\}$, of $\{\bar \g_{a_n}\}$ such that
\begin{equation*}
\rho_{{\bar{\g}_{a_n}}}\to\rho_{\bar\g}\ \text{ strongly in}\  L^{\f53}(\R^3) \ \text{ as } \ n\to\infty,
\end{equation*}
where $\bar \g_{a_n}$ is as in \eqref{wia},  and $\bar\g:=\sum_{i=1}^N|\bar w_i\rangle\langle \bar w_i|$ is an optimizer of $a_N^*$ defined by \eqref{a*}. This implies that there exists a constant $M_1>0$, independent of $a_n>0$, such that
\begin{equation*}
\int_{\R^3}\rho_{\bar\g_{a_n}}^{\f53} dx\geq M_1\ \text{ as }\ n\to\infty.
\end{equation*}

On the other hand, we get from \eqref{l3.5} with $R=1$ that there exists a positive  constant $M_2>0$, independent of $a_n>0$,  such that
\begin{equation*}
\int_{B_1(0)}V(\epsilon_{a_n}x+\epsilon_{a_n}y_{\epsilon_{a_n}})\rho_{\bar\g_{a_n}}dx\geq M_2\epsilon_{a_n}^2\ \text{ as }\ n\to\infty.
\end{equation*}
We thus conclude that
\begin{equation*}
\begin{split}
I_{a_n}(N)=E_{a_n}(\gamma_{a_n})=&\f{1}{\epsilon_{a_n}^2}\Big(\operatorname{Tr}(-\Delta \bar\g_{a_n})-a_N^*\int_{\R^3}\rho_{\bar\g_{a_n}}^{\f{5}3}dx\Big)\\
&+\f{a_N^*-a_n}{\epsilon_{a_n}^2}\int_{\R^3}\rho_{\bar\g_{a_n}}^{\f{5}3}dx+	 \int_{\R^3}V(\epsilon_{a_n}x+\epsilon_{a_n}y_{\epsilon_{a_n}})\rho_{\bar\g_{a_n}}dx\\
&\geq  \f{a_N^*-a_n}{\epsilon_{a_n}^2}M_1+M_2\epsilon_{a_n}^2\\
&\geq 2\sqrt{M_1M_2}(a_N^*-a_n)^{\f 12}\ \text{ as }\ n\to\infty,
\end{split}
\end{equation*}
which gives that  \eqref{rel} holds for the subsequence $\{a_n\}$.

Since the above argument is true for any subsequence $\{a_n\}$ satisfying $a_n\nearrow a_N^*$ as $n\to\infty$,  it implies that \eqref{rel} holds for all  $a\nearrow a_N^*$. This completes the proof of Theorem \ref{ee}.\qed

\section{Mass Concentration of Minimizers as $a\nearrow a_N^*$}\label{4}

The purpose of this section is to establish Theorems \ref{1.2T} on the mass concentration behavior of minimizers for $I_a(N)$ as $a\nearrow a_N^*$. Towards this aim, suppose $\g_{a}=\sum_{i=1}^N| u^a_i\rangle\langle u^a_i|$ is a minimizer of  $I_a(N)$, where $u_i^a\in\mathcal{H}$ satisfies \eqref{NLS} and  $\langle u_i^a,u_j^a\rangle_{L^2(\R^3)}=\delta_{ij}$ for $i,\,j=1,\cdots,N$. Applying Theorem \ref{ee}, we then obtain that there exists a constant $K>0$, independent of $a>0$, such that
\begin{equation}\label{4.1}
0<K(a_N^*-a)^{-\f12}\leq\int_{\R^3}\rho_{\g_a}^{\f 53}dx\leq \f 1K(a_N^*-a)^{-\f12 }\ \text{ as }\ a\nearrow a_N^*,
\end{equation}
where $\rho_{\g_a}:=\sum_{i=1}^N|u_i^a|^2$.

Inspired by (\ref{4.1}), we define
\begin{equation}\label{ne}
\varepsilon_a:=(a^*_N-a)^{\f 14}>0,\ \ 0<a<a_N^*.
\end{equation}
It follows  from \eqref{a*} that
\begin{equation}
I_a(N)=E_a(\g_a)\geq\Big(1-\f a{a_N^*}\Big)\operatorname{Tr}(-\Delta\gamma_a)+\int_{\R^3}V(x)\rho_{\g_a}dx.
\end{equation}
Applying Theorem \ref{ee}, this gives that
\begin{equation}\label{4.3}
\operatorname{Tr}(-\Delta\gamma_a)\leq C\varepsilon_a^{-2},\quad\int_{\R^3}V(x)\rho_{\g_a}dx\leq C\varepsilon_a^2\ \ \hbox{as}\ \ a\nearrow a_N^*.
\end{equation}
The  similar argument of  Lemma \ref{3.3L} (2) then yields that there exist a sequence $\{y_{\varepsilon_a}\}\subset \R^3$, and positive constants $R_0$ and $\eta$ such that the sequence
\begin{equation}\label{win}
\hat w^a_i(x):=\varepsilon_a^{\f32}u_i^a(\varepsilon_a x+\varepsilon_ay_{\varepsilon_a}),\ \ i=1,\cdots,N,\ \ \hat\g_{{a}}:=\dsum_{i=1}^N|\hat w^a_i\rangle\langle\hat w^a_i|,
\end{equation}
satisfies
\begin{equation}\label{liminf3}
\liminf_{a\nearrow a_N^*}\int_{B_{R_0}(0)}\rho_{\hat{\g}_a}dx\geq\eta>0,
\end{equation}
where $\rho_{\hat{\g}_a}:=\dsum_{i=1}^N|\hat w^a_i|^2$. We then obtain from \eqref{4.1}, \eqref{ne} and \eqref{4.3} that
\begin{equation}\label{4-1}
\operatorname{Tr}(-\Delta\hat\gamma_a)\leq C,\quad0<K\leq\int_{\R^3}\rho_{\hat\g_a}^{\f 53}dx\leq \f 1K\ \text{ as }\ a\nearrow a_N^*,
\end{equation}
where the positive constants $C$ and $K$ are independent of $a>0$.


\begin{Lemma}\label{4.1L}
Assume $V(x)$ satisfies \eqref{eV}, and let $N\in\mathbb{N}^+$ be chosen such that any optimizer of $a_N^*$ is full-rank.
  Suppose $\g_{{a_n}}=\sum_{i=1}^N| u^{a_n}_i\rangle\langle u^{a_n}_i|$ is a minimizer of $I_{a_n}(N)$ satisfying $a_n\nearrow a_N^*$ as $n\to\infty$, where $u_i^{a_n}\in\mathcal{H}$ satisfies \eqref{NLS} and $\langle u_i^{a_n}, u_j^{a_n}\rangle_{L^2(\R^3)}=\delta_{ij}$ for $i,\,j=1,\cdots,N$. Then we have
\begin{enumerate}
\item[(1)] There exists a subsequence, still denoted by $\{a_n\}$, of $\{a_n\}$ such that
\begin{equation}\label{zA}
\hat x_n:=\varepsilon_{a_n}y_{\varepsilon_{a_n}}\to x_0\ \text{ as }\  n\to\infty,\ \ \varepsilon_{a_n}=(a_N^*-a_n)^{\frac14}>0,
\end{equation}
where  $x_0\in\R^3$ is a global minimum point of $V(x)$, $i.e.$,  $V(x_0)=0$. Moreover, there exists a function $\hat w_i\in H^1(\R^3) $ such that
\begin{equation}\label{4-2}
\hat w_i^{a_n}\to \hat w_i\ \ \hbox{strongly in}\ \  H^1(\R^3)\ \ \hbox{as}\ \ n\to\infty,\ \ i=1,\cdots,N,
\end{equation}
where $\hat w^{a_n}_i$ is defined by \eqref{win}, and $\hat\g:=\sum_{i=1}^N|\hat w_i\rangle\langle \hat w_i|$ is an optimizer of $a_N^*$ defined by \eqref{a*}.

\item[(2)] The $i$-th eigenvalue $\mu_i^{a_n}$ of the operator $H_V^{n}:=-\Delta+V(x)-\frac{5a_n}{3}\rho_{\g_{a_n}}^{\frac{2}{3}}$ in $L^2(\R^3)$ satisfies
\begin{equation}\label{limmu}
\lim_{n\to\infty}\varepsilon_{a_n}^2\mu_i^{a_n}=\mu_i<0,\quad i=1,\cdots,N,
\end{equation}
where $\mu_i$ is the $i$-th eigenvalue of the operator $H_{\hat\g}:=-\Delta-\frac{5a_N^*}{3}\rho_{\hat\g}^{\f23}$ in $L^2(\R^3)$, and $\hat \g=\sum_{i=1}^N|\hat w_i\rangle\langle \hat w_i|$ is as in \eqref{4-2}.

\item[(3)]There exists a constant $C>0$, independent of $a_n>0$, such that for $i=1,\cdots,N$,
\begin{equation}\label{wed}
|\hat w_i^{a_n}|\leq C\e^{-\sqrt{\f12|\mu_i|}|x| }\ \ \hbox{and}\ \ \rho_{\hat\g_{a_n}}\leq Ce^{-\sqrt{2|\mu_N|}|x|}\ \ \text{uniformly in}\ \, \R^3
\end{equation}
as $n\to\infty$, where $\mu_i<0$ is as in \eqref{limmu} for $i=1,\cdots,N$.
\end{enumerate}
\end{Lemma}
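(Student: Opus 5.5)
Part (1) follows Lemma \ref{3.3L}, now with the fixed scale $\varepsilon_a=(a_N^*-a)^{\f14}$ in place of $\epsilon_a$. By \eqref{4-1}, $\{\hat w_i^{a_n}\}$ is bounded in $H^1(\R^3)$. Passing to a subsequence, $\hat w_i^{a_n}\rightharpoonup\hat w_i$ weakly in $H^1$, and \eqref{liminf3} gives $\rho_{\hat\g}\not\equiv0$. The localization \eqref{zA} is proved as \eqref{zb}: if $V(\hat x_n)\ge\delta>0$, then \eqref{liminf3} forces $\int V\rho_{\g_{a_n}}\ge\delta\eta/2$, contradicting \eqref{4.3}.

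For the compactness, set $\epsilon_{a_n}^{-2}=\operatorname{Tr}(-\Delta\g_{a_n})$. By \eqref{4.1} and \eqref{3-15}, the ratio $t_n:=\varepsilon_{a_n}/\epsilon_{a_n}$ is bounded above and below. Hence $\hat w_i^{a_n}(x)=t_n^{-3/2}\bar w_i^{a_n}(x/t_n+\text{shift})$ is a dilation of the sequence in \eqref{sc}. The shift must stay bounded, because both $\hat w^{a_n}$ and $\bar w^{a_n}$ carry mass $\eta$ near the origin while $\bar w^{a_n}$ converges strongly. Thus, after recentering $y_{\varepsilon_a}$ by a bounded amount if needed, strong $H^1$-convergence is inherited, and the limit is a rescaled optimizer of $a_N^*$ (the quotient in \eqref{a*} is scale invariant). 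Alternatively, one can rerun Steps 1--3 of Lemma \ref{3.3L} verbatim, using $\operatorname{Tr}(-\Delta\hat\g_{a_n})-a_N^*\int\rho_{\hat\g_{a_n}}^{5/3}\to0$, which follows from $I_{a_n}\le C\varepsilon_{a_n}^2$.

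Part (2) is the main obstacle, since $H_V^n$ has no essential spectrum and negativity is not automatic. Multiplying \eqref{NLS} by $\bar u_i^{a_n}$ and summing gives
\begin{equation*}
\varepsilon_{a_n}^2\sum_{i=1}^N\mu_i^{a_n}=\operatorname{Tr}(-\Delta\hat\g_{a_n})+\varepsilon_{a_n}^2\int V\rho_{\g_{a_n}}-\f53a_n\int\rho_{\hat\g_{a_n}}^{5/3}.
\end{equation*}
By \eqref{4-2} and \eqref{4.3}, this tends to $\operatorname{Tr}(-\Delta\hat\g)-\f53a_N^*\int\rho_{\hat\g}^{5/3}=\sum_{i=1}^N\mu_i$. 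The last equality holds because $\hat\g$ is a full-rank optimizer, so its orbitals solve \eqref{oH} with the first $N$ eigenvalues $\mu_i<0$ of $H_{\hat\g}$.

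For the individual eigenvalues, use min-max with the rescaled operator $\varepsilon_{a_n}^2H_V^n$, which after scaling is $-\Delta+\varepsilon_{a_n}^2V(\varepsilon_{a_n}x+\hat x_n)-\f53a_n\rho_{\hat\g_{a_n}}^{2/3}$. Upper bound: take as trial functions the first $N$ eigenfunctions of $H_{\hat\g}$, which decay exponentially. Then the potential term is $O(\varepsilon_{a_n}^2)$ locally, and $\rho_{\hat\g_{a_n}}^{2/3}\to\rho_{\hat\g}^{2/3}$ in $L^{5/2}$. This gives $\limsup\varepsilon_{a_n}^2\mu_i^{a_n}\le\mu_i$. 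Lower bound: since $V\ge0$, $\varepsilon_{a_n}^2\mu_i^{a_n}\ge\lambda_i(-\Delta-\f53a_n\rho_{\hat\g_{a_n}}^{2/3})\to\mu_i$, by continuity of min-max levels under $L^{5/2}$-convergence of the potential. Together these yield \eqref{limmu}; the trace identity above serves as a consistency check.

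Part (3) follows from the negativity in (2). For large $n$, $\varepsilon_{a_n}^2\mu_i^{a_n}\le\mu_i/2<0$. Kato's inequality with $V\ge0$ gives
\begin{equation*}
-\Delta|\hat w_i^{a_n}|-\f53a_n\rho_{\hat\g_{a_n}}^{2/3}|\hat w_i^{a_n}|\le\f{\mu_i}{2}|\hat w_i^{a_n}|.
\end{equation*}
De Giorgi--Nash--Moser, as in \eqref{ub}, together with $L^2$ strong convergence gives uniform $L^\infty$ bounds and $\hat w_i^{a_n}\to0$ uniformly as $|x|\to\infty$, since the $L^2$ tails are uniformly small. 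Hence $\f53a_n\rho_{\hat\g_{a_n}}^{2/3}\le|\mu_i|/4$ outside a fixed ball $B_{R}$. Comparing with $Ce^{-\sqrt{|\mu_i|/2}|x|}$ via the maximum principle on $\R^3\setminus B_R$ yields the first estimate in \eqref{wed}. Summing the squares of these bounds and using $|\mu_i|\ge|\mu_N|$ gives the second.
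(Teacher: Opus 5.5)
Your proposal is correct. In parts (1) and (2) it follows a different route from the paper.

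\textbf{Part (1).} The paper only says that the blow-up analysis of Lemma \ref{3.3L}(3) can be repeated for $\hat w_i^{a_n}$; that is your stated alternative. Your main argument instead uses \eqref{4.1} and \eqref{3-15} to get $\varepsilon_{a_n}\asymp\epsilon_{a_n}$, so $\hat w_i^{a_n}$ is a bounded dilation and translation of $\bar w_i^{a_n}$. One small correction: with $t_n=\varepsilon_{a_n}/\epsilon_{a_n}$, the formula is $\hat w_i^{a_n}(x)=t_n^{3/2}\bar w_i^{a_n}(t_nx+s_n)$, not $t_n^{-3/2}\bar w_i^{a_n}(x/t_n+s_n)$. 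This is harmless. Tightness of the strongly convergent $\rho_{\bar\g_{a_n}}$, combined with \eqref{liminf3}, bounds $s_n$. So no recentering of $y_{\varepsilon_a}$ is needed, and \eqref{4-2} is inherited from \eqref{sc}. This avoids redoing the dichotomy argument under a new normalization, where $\operatorname{Tr}(-\Delta\hat\g_{a_n})$ is no longer fixed. It also gives \eqref{zA} for free, since $\hat x_n-\bar x_n=\epsilon_{a_n}s_n\to0$.

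\textbf{Part (2).} The paper proceeds in four steps:
\begin{enumerate}
\item It bounds $\varepsilon_{a_n}^2\sum_i\mu_i^{a_n}$ from above via $\sum_i\mu_i^{a_n}=I_{a_n}(N)-\frac23a_n\int\rho_{\g_{a_n}}^{5/3}dx$.
\item It bounds each $\varepsilon_{a_n}^2\mu_i^{a_n}$ trivially from below.
\item It extracts a convergent subsequence and passes to the weak limit in \eqref{lmw}.
\item It identifies the limits as the first $N$ eigenvalues of $H_{\hat\g}$ through \eqref{oH}.
\end{enumerate}
Your two-sided min-max comparison is a genuine alternative. The upper bound uses the first $i$ eigenfunctions of $H_{\hat\g}$ as trial functions. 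The lower bound drops $V\geq0$ and uses stability of min-max levels under $L^{5/2}$-convergence of $\rho_{\hat\g_{a_n}}^{2/3}$. This gives the limit along the whole subsequence from (1) and identifies it directly as $\lambda_i(H_{\hat\g})$. The cost is the perturbation bound $\bigl|\int W|\phi|^2dx\bigr|\leq C\|W\|_{L^{5/2}}\|\phi\|_{H^1}^2$. The paper's route is more elementary but relies on \eqref{oH} to label the limits.

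\textbf{Part (3).} This is the paper's argument, but your constants do not give the claimed rate. With $\varepsilon_{a_n}^2\mu_i^{a_n}\leq\mu_i/2$ and $\frac53a_n\rho_{\hat\g_{a_n}}^{2/3}\leq|\mu_i|/4$ outside $B_R$, you only get $-\Delta|\hat w_i^{a_n}|\leq-\frac{|\mu_i|}{4}|\hat w_i^{a_n}|$ there. That supports comparison with $e^{-\frac12\sqrt{|\mu_i|}|x|}$, not with $e^{-\sqrt{|\mu_i|/2}\,|x|}$. Instead, take $\varepsilon_{a_n}^2\mu_i^{a_n}\leq\frac34\mu_i$ for large $n$, which is possible since the limit is $\mu_i$. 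Then the total coefficient outside $B_R$ is at most $\mu_i/2$, as in the paper, and both estimates in \eqref{wed} follow as you describe.
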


\noindent{\bf Proof.} (1). Applying Theorem  \ref{ee} and \eqref{liminf3}, the similar arguments of Lemma \ref{3.3L} (3) yield that both \eqref{zA} and \eqref{4-2} hold true. We omit the details for simplicity.

(2). Since $\g_{a_n}$ is a minimizer of $I_{a_n}(N)$ satisfying  \eqref{NLS}, we have
\begin{equation}\label{4-12}
-\Delta u_i^{a_n}+V(x)u_i^{a_n}-\f53 a_n\rho_{\g_{a_n}}^{\f23}u_i^{a_n}=\mu_i^{a_n}u_i^{a_n}\ \ \text{in}\ \, \R^3,\ \ i=1,\cdots,N,
\end{equation}
where $\mu_i^{a_n}$ is the $i$-th eigenvalue of the operator $H_V^{n}:=-\Delta+V(x)-\frac{5a_n}{3}\rho_{\g_{a_n}}^{\frac{2}{3}}$ in $L^2(\R^3)$. It then follows from \eqref{4-12} that
\begin{equation*}
\mu^{a_n}_i =\int_{\R^3}|\nabla u_i^{a_n}|^2dx +\int_{\R^3}V(x)|u_i^{a_n}|^2dx-\f53 {a_n}\int_{\R^3}\rho_{\g_{a_n}}^{\f 23}|u^{a_n}_i|^2dx,\ \,i=1,\cdots,N,
\end{equation*}
which implies that
\begin{equation*}
\begin{split}		
\sum_{i=1}^N\mu_i^{a_n}&=\operatorname{Tr}\big(-\Delta+V(x)\big)\g_{a_n}-\f53{a_n}\int_{\R^3}\rho_{\g_{a_n}}^{\f 53}dx\\
&=I_{a_n}(N)-\f23 {a_n}\int_{\R^3}\rho^{\f 53}_{\g_{a_n}}dx.
\end{split}
\end{equation*}
Applying Theorem \ref{ee}, we then obtain from  \eqref{win} and \eqref{4-1} that
\begin{equation}\label{limem}
\begin{split}
\varepsilon_{a_n}^2\sum_{i=1}^N\mu_i^{a_n}&=\varepsilon_{a_n}^2\l(I_{a_n}(N)-\f23 {a_n}\int_{\R^3}\rho^{\f 53}_{\g_{a_n}}dx\r)\\
&=-\frac{2a_N^*}{3}\int_{\R^3}\rho_{\hat\g_{a_n}}^{\f53}dx+o(1)\leq -\frac{a_N^*K}{3}\ \text{ as }\ n\to \infty,
\end{split}
\end{equation}
where $K>0$ is as in \eqref{4.1}.

Note from  \eqref{win} and \eqref{4-12} that $\hat w_i^{a_n}$  satisfies  for $i=1,\cdots,N$,
\begin{equation}\label{lmw}
-\Delta \hat w_i^{a_n}+\varepsilon_{a_n}^2V(\varepsilon_{a_n}x +\varepsilon_{a_n}y_{\varepsilon_{a_n}})\hat w_i^{a_n}-\f53 a_n\rho_{\hat{\g}_{a_n}}^{\f 23}\hat w_i^{a_n}=\varepsilon_{a_n}^2\mu^{a_n}_i \hat w_i^{a_n}\ \ \mbox{in}\, \ \R^3.
\end{equation}
We then derive  from \eqref{4-1} and \eqref{lmw} that for $i=1,\cdots,N$,
\begin{equation}\label{infm1}
\begin{split}
&\liminf_{n\to\infty}\varepsilon_{a_n}^2\mu^{a_n}_i\\
=&\liminf_{n\to\infty}\Big[\int_{\R^3}|\nabla \hat w_i^{a_n}|^2dx+\varepsilon_{a_n}^2\int_{\R^3}V(\varepsilon_{a_n}x +\varepsilon_{a_n}y_{\varepsilon_{a_n}})|\hat w_i^{a_n}|^2dx\\
&\qquad\qquad-\f53 a_n\int_{\R^3}\rho_{\hat{\g}_{a_n}}^{\f 23}|\hat w_i^{a_n}|^2dx\Big]\\
\geq&-\limsup_{n\to\infty}\f53a_n\int_{\R^3}\rho_{\hat{\g}_{a_n}}^{\f53}dx\geq-\f {5a_N^*}{3K},
\end{split}
\end{equation}
which implies that up to a subsequence if necessary, $\big\{\varepsilon_{a_n}^2\mu_i^{a_n}\big\}$ is  bounded from below for $i=1,\cdots,N$. It then follows from \eqref{limem}
and \eqref{infm1} that
$$\Big\{\sum_{i=1}^N\varepsilon_{a_n}^2\mu^{a_n}_i\Big\} \text{ is bounded uniformly in }n,$$ which further gives that
\begin{equation*}
\big\{\varepsilon_{a_n}^2\mu^{a_n}_i\big\} \text{ is bounded uniformly in }n,\  \,i=1,\cdots,N.
\end{equation*}
Therefore, there exists a subsequence, still denoted by $\{a_n\}$, of $\{a_n\}$ such that
\begin{equation*}
\lim_{n\to\infty}\varepsilon_{a_n}^2\mu_i^{a_n}=\mu_i,\ \ i=1,\cdots,N.
\end{equation*}
By taking the weak limit of \eqref{lmw}, we thus obtain from \eqref{4-2} that $\hat w_i$ satisfies
\begin{equation*}
-\Delta  \hat w_i-\f53 a_N^*\rho_{\hat\g}^{\f23}\hat w_i=\mu_i \hat w_i\ \,\text{ in }\ \R^3,\ \,i=1,\cdots,N,
\end{equation*}
where $\rho_{\hat\g}=\sum_{i=1}^N|\hat w_i|^2$. Since Lemma \ref{4.1L} (1) gives that $\hat\g=\sum_{i=1}^N|\hat w_i\rangle\langle \hat w_i|$ is an optimizer of $a_N^*$, where $\hat w_i\in H^1(\R^3)$ satisfies $\langle \hat w_i,\hat w_j\rangle_{L^2(\R^3)}=\delta_{ij}$ for $i,\,j=1,\cdots,N$,  it follows from   \eqref{oH} that  $\mu_i$ is the $i$-th negative eigenvalue of the operator $H_{\hat\g}:=-\Delta-\frac{5a_N^*}{3}\rho_{\hat\g}^{\f23}$ in $L^2(\R^3)$, and hence $\mu_i<0$ for $i=1,\cdots,N$.

(3). We first claim that for $i=1,\cdots,N$,
\begin{equation}\label{uc0}
|\hat w_i^{a_n}|\leq C\ \ \hbox{and}\ \	\lim_{|x|\to\infty}|\hat w_i^{a_n}|\to0\ \  \text{uniformly for} \ \hbox{sufficiently large}\ \ n.
\end{equation}
Indeed, note from  \eqref{limmu} that $\mu_i^{a_n}<0$ as $n\to\infty$, $i=1,\cdots,N$. Applying Kato's inequality (cf. \cite[Theorem X.27]{Sim2}), it yields from \eqref{lmw} that
\begin{equation}\label{dgm}
\big[-\Delta+c_n(x)\big] |\hat w_i^{a_n}|\leq0\ \text{ in}\ \R^3\ \ \hbox{as}\ \ n\to\infty,\ \,i=1,\cdots, N,
\end{equation}
where $c_n(x):=-\f53 a_n\rho_{\hat{\g}_{a_n}}^{\f 23}<0$. Since $\big\{\hat w_i^{a_n}\big\}$ is bounded uniformly in $H^1(\R^3)$ as $n\to\infty$, by Sobolev's embedding theorem it gives that $\big\{\hat w_i^{a_n}\big\}$ is bounded uniformly in $L^q(\R^3)$  as $n\to\infty$, where $2\leq q\leq 6$. We therefore derive that $\big\{\rho_{\hat \g_{a_n}}^{\f23}\big\}$ is bounded uniformly in $L^r(\R^3)$ as $n\to\infty$, where $\f32\leq r\leq \f92$. By  De Giorgi-Nash-Moser theory (cf. \cite[Theorem 4.1]{hl}), we deduce from \eqref{dgm} that for $i=1,\cdots,N$,
\begin{equation}\label{uub}
\max_{B_1(\xi)} |\hat w_i^{a_n}|\leq C\Big(\int_{B_2(\xi)}|\hat w_i^{a_n}|^2dx\Big)^{\f12}\  \text{ uniformly for sufficiently large  }n>0,
\end{equation}
where $\xi\in\R^3$ is arbitrary, and  $C>0$ depends only on the bound of $\|c_n(x)\|_{L^2(B_2(\xi))}$. It then gives from \eqref{4-2} and \eqref{uub} that
$$|\hat w_i^{a_n}|\leq C\ \ \hbox{and}\ \ \lim_{|x|\to\infty}|\hat w_i^{a_n}|=0\ \ \hbox{uniformly as}\ \ n\to\infty,$$
and the claim \eqref{uc0} hence holds true.

We now prove that \eqref{wed} holds true. Applying  Kato's inequality (cf. \cite[Theorem X.27]{Sim2}) to \eqref{lmw} again,  we obtain from \eqref{limmu} and \eqref{uc0} that for sufficiently large $M>0$,
\begin{equation*}
\begin{split}
-\Delta|\hat w_i^{a_n}|&\leq-\varepsilon_{a_n}^2V(\varepsilon_{a_n}x + \hat x_{n})|\hat w_i^{a_n}|+\f53 a_n\rho_{\hat{\g}_{a_n}}^{\f 23}|\hat w_i^{a_n}|+\varepsilon_{a_n}^2\mu^{a_n}_i |\hat w_i^{a_n}|\\
&\leq \f12 \mu_i |\hat w_i^{a_n}|\ \ \text{ in }\  \R^3\backslash B_M(0)\ \text{ as }\ n\to\infty,\ \ i=1,\cdots,N.
\end{split}
\end{equation*}
By the comparison principle, it then follows from \eqref{uc0}  that
\begin{equation*}
|\hat w_i^{a_n}|\leq C\e^{-\sqrt{\f12|\mu_i|}|x| }\ \ \text{uniformly in}\ \, \R^3 \ \text{ as }\ n\to\infty,\,\ i=1,\cdots,N,
\end{equation*}
which further implies that
\begin{equation*}
\rho_{\hat\g_{a_n}}\leq Ce^{-\sqrt{2|\mu_N|}|x|}\ \ \hbox{uniformly in}\ \ \R^3\ \ \hbox{as}\ \ n\to\infty,
\end{equation*}
in view of the fact that $\mu_1<\mu_2\leq\cdots\leq\mu_N<0$. This therefore completes the proof of Lemma \ref{4.1L}.\qed
\vspace{5pt}

Using Lemma \ref{4.1L}, we next analyze the  following convergence of minimizers for $I_a(N)$ as $a\nearrow a_N^*$.
\begin{Lemma}\label{lem4.2}
Under the assumptions of Lemma \ref{4.1L}, assume $\big\{\hat w_i^{a_n}\big\}$ is the convergent subsequence obtained in Lemma \ref{4.1L}, where $\g_{a_n}=\dsum_{i=1}^N| u^{a_n}_i\rangle\langle u^{a_n}_i|$ is a minimizer of $I_{a_n}(N)$ satisfying $a_n\nearrow a_N^*$ as $n\to\infty$. Then up to a subsequence if necessary, for $i=1,\cdots,N$,
\begin{equation}\label{4-9}
w_i^{a_n}:=\varepsilon_{a_n}^{\f32}u_i^{a_n}(\varepsilon_{a_n} x+x_n)\to w_i\ \text{ strongly in }\  H^1(\R^3)\cap L^\infty(\R^3)\ \ \hbox{as}\ \ n\to\infty,
\end{equation}
  where  $\varepsilon_{a_n}=(a_N^*-a_n)^{\f14}>0$, and $\g:=\sum_{i=1}^N| w_i\rangle\langle  w_i|$  is an optimizer of $a_N^*$ defined by \eqref{a*}.  Moreover, the global maximum point $x_{n}\in\R^3$ of the density $\rho_{\g_{a_n}}:=\sum_{i=1}^N|u_i^{a_n}|^2$ satisfies
\begin{equation}\label{4-4}
\lim_{n\to\infty}x_n=x_0,
\end{equation}
where $x_0\in\R^3$ is a global minimum point of $V(x)$, i.e,  $V(x_0)=0$.
\end{Lemma}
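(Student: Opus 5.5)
Our plan is to pass from the translation $\hat x_n=\varepsilon_{a_n}y_{\varepsilon_{a_n}}$ of Lemma \ref{4.1L} to the maximum point $x_n$ of $\rho_{\g_{a_n}}$. We first upgrade the $H^1$-convergence \eqref{4-2} to $L^\infty$-convergence. Then we show that the blow-up points $x_n$ stay within $O(\varepsilon_{a_n})$ of $\hat x_n$, and finally we re-center.

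\textbf{Step 1 ($L^\infty$-convergence of $\hat w_i^{a_n}$).} Consider the difference $\hat w_i^{a_n}-\hat w_i$. It solves a linear elliptic equation whose right-hand side consists of the following terms:
\begin{itemize}
\item $\varepsilon_{a_n}^2V(\varepsilon_{a_n}x+\hat x_n)\hat w_i^{a_n}$;
\item $\frac53\big(a_n\rho_{\hat\g_{a_n}}^{2/3}\hat w_i^{a_n}-a_N^*\rho_{\hat\g}^{2/3}\hat w_i\big)$;
\item $(\varepsilon_{a_n}^2\mu_i^{a_n}-\mu_i)\hat w_i^{a_n}$;
\item $\mu_i(\hat w_i^{a_n}-\hat w_i)$.
\end{itemize}
On any ball $B_2(\xi)$ we estimate these terms in $L^2$ using the following facts:
\begin{itemize}
\item the uniform bounds \eqref{uc0};
\item the $H^1$-convergence \eqref{4-2}, which implies $\rho_{\hat\g_{a_n}}\to\rho_{\hat\g}$ in $L^r$;
\item the convergence \eqref{limmu};
\item for the potential term, $\varepsilon_{a_n}^2V(\varepsilon_{a_n}x+\hat x_n)\le C\varepsilon_{a_n}^2(1+|x|^2)$ on $B_2(\xi)$. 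This holds because $V$ is quadratic and $\hat x_n$ is bounded, so the term tends to $0$ locally uniformly.
\end{itemize}
Local $W^{2,2}$-estimates (or De Giorgi--Nash--Moser as in \eqref{uub}) then give $\|\hat w_i^{a_n}-\hat w_i\|_{L^\infty(B_1(\xi))}\to0$ for $|\xi|\le M$. For $|x|\ge M$ we use instead the uniform exponential decay \eqref{wed}, together with the decay of $\hat w_i$ from \eqref{Q}, which makes both functions uniformly small there. Combining the two regions yields $\hat w_i^{a_n}\to\hat w_i$ in $L^\infty(\R^3)$.

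\textbf{Step 2 (localizing the maximum point).} Set $z_n:=(x_n-\hat x_n)/\varepsilon_{a_n}$, so that $z_n$ is a global maximum point of $\rho_{\hat\g_{a_n}}$. By \eqref{liminf3} and the uniform bound, $\max\rho_{\hat\g_{a_n}}\ge c>0$. Meanwhile \eqref{wed} gives $\rho_{\hat\g_{a_n}}\le Ce^{-\sqrt{2|\mu_N|}|x|}$, so $\{z_n\}$ is bounded. Passing to a subsequence, $z_n\to z_0$. Then $x_n=\hat x_n+\varepsilon_{a_n}z_n\to x_0$ by \eqref{zA}, which proves \eqref{4-4}.

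\textbf{Step 3 (re-centering).} We have $w_i^{a_n}(x)=\hat w_i^{a_n}(x+z_n)$. Put $w_i:=\hat w_i(\cdot+z_0)$. Translation is continuous in $H^1$ and, since $\hat w_i$ is uniformly continuous (it is continuous and decays at infinity), also in $L^\infty$. Combined with Step 1 this gives \eqref{4-9}. Finally, $\g=\sum_{i=1}^N|w_i\rangle\langle w_i|$ is an optimizer of $a_N^*$, because translations preserve both the kinetic energy and $\int_{\R^3}\rho^{5/3}dx$.

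The main obstacle is Step 1, specifically controlling the potential term uniformly and obtaining decay uniformly in $n$. Lemma \ref{4.1L}(3) supplies the decay, so the remaining work is the routine local elliptic estimate.
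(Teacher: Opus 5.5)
Your proof is correct and follows the paper's overall structure. You show that $(x_n-\hat x_n)/\varepsilon_{a_n}$ stays bounded using the uniform exponential decay \eqref{wed}, re-center, and upgrade $H^1$- to $L^\infty$-convergence by local elliptic estimates on bounded sets plus uniform decay outside.

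The genuine difference is how you bound the peak of the density from below. The paper argues by contradiction and evaluates the differential inequality $-\Delta\rho_{\g_{a_n}}\le\frac{10a_n}{3}\rho_{\g_{a_n}}^{5/3}+2\mu_N^{a_n}\rho_{\g_{a_n}}$ at the maximum point. Together with $\mu_N^{a_n}<0$ and \eqref{limmu}, this gives $\rho_{\g_{a_n}}(x_n)\ge C\varepsilon_{a_n}^{-3}$. You get the equivalent bound $\max\rho_{\hat\g_{a_n}}\ge \eta/(2|B_{R_0}|)$ simply by averaging the non-vanishing estimate \eqref{liminf3}. So you never evaluate the equation at the maximum point, and the extra ``uniform bound'' you invoke is not even needed. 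Your route is more elementary. The paper's has the merit of bounding the peak height intrinsically in terms of $|\mu_N^{a_n}|$, independently of the concentration-compactness output.

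There is also a smaller difference in ordering. You prove $L^\infty$-convergence of $\hat w_i^{a_n}$ by estimating the difference $\hat w_i^{a_n}-\hat w_i$ directly, with no further subsequence needed. You then transfer it to $w_i^{a_n}$ by translation, which requires the uniform continuity of $\hat w_i$; you justify this correctly. The paper instead re-centers first, then gets local uniform convergence of $w_i^{a_n}$ from uniform $H^2(B_R)$ bounds and compact embedding, identifying the limit through the $H^1$-convergence.
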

	
\noindent{\bf Proof.}	
Note from \eqref{continuous} that $\rho_{\g_{a_n}}$ has at least one  maximum point $x_{n}\in\R^3$. Defining
\begin{equation}\label{hwi}
w_i^{a_n}:=\varepsilon_{a_n}^{\f32}u_i^{a_n}(\varepsilon_{a_n} x+ x_n),\ \ \widetilde\g_{a_n}:=\sum_{i=1}^N|w_i^{a_n}\rangle\langle w_i^{a_n}|,
\end{equation}
it then follows from \eqref{4-12}  that $w_i^{a_n}$ satisfies the following system
\begin{equation}\label{4-7}
-\Delta w_i^{a_n}+\varepsilon_{a_n}^2 V(\varepsilon_{a_n}x+x_n) w_i^{a_n} -\f53 a_n\rho_{\widetilde{\g}_{a_n}}^{\f 23} w_i^{a_n}=\varepsilon_{a_n}^2\mu^{a_n}_i w_i^{a_n}\ \,\text{ in }\  \R^3,\ \,i=1,\cdots,N,
\end{equation}
where $\rho_{\widetilde{\g}_{a_n}}:=\sum_{i=1}^N|w_i^{a_n}|^2$, and $\mu_i^{a_n}$ is the $i$-th eigenvalue of the operator $-\Delta+V(x)-\f{5a_n}{3}\rho_{\g_{a_n}}^{\f23}$ in $L^2(\R^3)$.

We first claim that
\begin{equation}\label{4-5}
\Big\{\f{\hat x_n-x_n}{\varepsilon_{a_n}}\Big\}\subset \R^3\ \ \hbox{is bounded uniformly in}\ \, n,
\end{equation}
where $\hat x_n\in\R^3$ is as in Lemma \ref{4.1L} (1). On the contrary, suppose that  $\l|\f{\hat x_n-x_n}{\varepsilon_{a_n}}\r|\to\infty$ as $n\to\infty$ after extracting  a subsequence. By the exponential decay \eqref{wed}, we deduce from \eqref{win} that
\begin{equation}\label{rlb}
\rho_{\g_{a_n}}(x_n)={\varepsilon_{a_n}^{-3}}\rho_{\hat\g_{a_n}}\Big(\f{x_n- \hat x_n}{\varepsilon_{a_n}}\Big)\leq C{\varepsilon_{a_n}^{-3}}\e ^{-\sqrt{2|\mu_N|}\f{|x_n- \hat x_n|}{\varepsilon_{a_n}}}=o(\varepsilon_{a_n}^{-3})\ \ \hbox{as}\ \ n\to\infty.
\end{equation}
On the other hand, direct calculations yield from \eqref{4-12} that
\begin{equation}\label{4-13}
\begin{split}
-\Delta \rho_{\g_{a_n}}&=-2\sum_{i=1}^N\Big(\big|\n u_i^{a_n}\big|^2+u_i^{a_n}\Delta u_i^{a_n}\Big)\\
&\leq2\sum_{i=1}^N\Big(-V(x)u_i^{a_n}+\f{5a_n}3\rho_{\g_{a_n}}^{\f23}u_i^{a_n}+\mu_i^{a_n} u^{a_n}_i\Big)u_i^{a_n}\\
&\leq \f{10a_n}3\rho_{\g_{a_n}}^{\f53}+2\mu_N^{a_n}\rho_{\g_{a_n}}\ \ \hbox{in}\ \ \R^3,
\end{split}
\end{equation}
due to the facts that $V(x)\geq0$   and  $\mu_1^{a_n}<\mu_2^{a_n}\leq\dots\leq\mu_N^{a_n}$. Since $x_n\in\R^3$ is the maximum point of $\rho_{\g_{a_n}}$ in $\R^3$, we further deduce from \eqref{limmu} and \eqref{4-13} that
\begin{equation*}
\rho_{\g_{a_n}}(x_n)=\sum_{i=1}^N| u^{a_n}_i(x_n)|^2\geq \Big(-\f{3\mu_N^{a_n}}{5a_n}\Big)^{\f 32}\geq C\varepsilon_{a_n}^{-3}\ \ \hbox{as}\ \ n\to\infty,
\end{equation*}
which however contradicts with \eqref{rlb}, and hence the claim \eqref{4-5} holds true.

It follows from \eqref{zA} and \eqref{4-5} that
$$\lim_{n\to\infty}x_n=\lim_{n\to\infty}\hat x_n=x_0,$$
where $x_0\in\R^3$ is a global minimum point of $V(x)$. This gives that \eqref{4-4} holds true. Moreover, we  obtain from \eqref{4-5} that  there exist a subsequence, still denoted by $\{a_n\}$, of $\{a_n\}$ and a point $x_*\in\R^3$ such that
$$\frac{ x_n-\hat x_n}{\varepsilon_{a_n}}\to x_*\ \ \hbox{as}\ \ n\to\infty.$$
One can then derive from \eqref{win}, \eqref{4-2} and \eqref{hwi} that for $i=1,\cdots,N$,
\begin{equation}\label{4-6}
\begin{aligned}
w_i^{a_n}(x):&=\varepsilon_{a_n}^{\f32}u_i^{a_n}(\varepsilon_{a_n} x+ x_n)=\hat w_i^{a_n}\Big(x+\frac{x_n-\hat x_n}{\varepsilon_{a_n}}\Big)\\
&\to\hat w_i(x+x_*):=w_i(x)\ \ \hbox{strongly in}\ \ H^1(\R^3)\ \ \hbox{as}\ \, n\to\infty.
\end{aligned}
\end{equation}
By the translation invariance of the minimization problem \eqref{a*}, we obtain that $\g:=\sum_{i=1}^N|w_i\rangle\langle w_i|$ is also an optimizer of $a_N^*$. Moreover, $\mu_i<0$ in \eqref{limmu} is also the $i$-th eigenvalue of the operator $H_\g:=-\Delta-\frac{5a_N^*}{3}\rho_\g^{\f23}$ in $L^2(\R^3)$.
		
In view of \eqref{4-6}, it remains to prove the   $L^\infty$-uniform convergence of $\big\{w_i^{a_n}\big\}$ as $n\to\infty$. The similar argument of  Lemma \ref{4.1L} (3) gives that for $i=1,\cdots,N$,
\begin{equation}\label{hed}
\big| w_i^{a_n}\big|\leq C\e^{-\sqrt{\f12|\mu_i|}|x| },\ \ \hbox{and}\ \ \rho_{\widetilde{\g}_{a_n}}\leq C e^{-\sqrt{2|\mu_N|}|x|}\ \ \text{uniformly in }\ \R^3\ \text{ as }\ n\to\infty,
\end{equation}
where $\mu_i<0$ is the $i$-th eigenvalue of the operator $H_\g$ in $L^2(\R^3)$. Defining
$$G_{i}^{a_n}(x):=-\varepsilon_{a_n}^2V(\varepsilon_{a_n}x+x_n)w_i^{a_n}+\frac{5a_n}{3}\rho_{\widetilde{\g}_{a_n}}^{\f23}w_i^{a_n}+\varepsilon_{a_n}^2\mu_i^{a_n}w_i^{a_n},\ \ i=1,\cdots,N,$$
so that  \eqref{4-7} can be rewritten as
\begin{equation}\label{4-8}
-\Delta w_i^{a_n}(x)=G_i^{a_n}(x)\ \,\ \hbox{in}\ \ \R^3,\ \ i=1,\cdots,N.
\end{equation}
Since  $\big\{w_i^{a_n}\big\}$ is bounded uniformly in $L^\infty(\R^3)$ as $n\to\infty$, one can obtain from \eqref{eV}, \eqref{limmu} and \eqref{4-4} that $\big\{G_i^{a_n}\big\}$ is bounded uniformly in $L^2_{loc}(\R^3)$ as $n\to\infty$. Applying  $L^p$ theory of \cite[Theorem 8.8]{gt}, it yields from \eqref{4-8} that for sufficiently large $R>0$,
\begin{equation*}
\| w_i^{a_n}\|_{H^{2}(B_R)}\leq C\l(\|  w_i^{a_n}\|_{H^1(B_{R+1})}+\|G_i^{a_n}\|_{L^2(B_{R+1})}\r),\,\,i=1,\cdots,N,
\end{equation*}
where $C>0$ is independent of $n>0$ and $R>0$. Hence,  $\big\{ w_i^{a_n}\big\}$ is bounded uniformly in $H^{2}(B_R)$ as $n\to\infty$ for $i=1,\cdots,N$. By the compact embedding theorem (cf. \cite[Theorem 7.26]{gt}), we thus conclude that there exist a subsequence, still denoted by $\big\{ w_i^{a_n}\big\}$, of $\big\{ w_i^{a_n}\big\}$ and a function $\widetilde{w}_i$  such that for above large $R>0$,
\begin{equation*}
w_i^{a_n}(x)\to \widetilde{w}_i(x) \ \text{ strong in }\ L^\infty(B_R)\ \text{ as }\ n\to\infty,\ \,i=1,\cdots,N.
\end{equation*}
We then derive from \eqref{4-6} that
\begin{equation}\label{lic}
w_i^{a_n}(x)\to  w_i(x) \ \text{ strong in }\ L_{loc}^\infty(\R^3)\ \text{ as }\ n\to\infty,\ \,i=1,\cdots,N.
\end{equation}
Since $\g=\dsum_{i=1}^N| w_i\rangle\langle w_i|$ is an optimizer of $a_N^*$, we obtain  from \eqref{Q} and \eqref{hed} that for any $\epsilon >0$, there exists a sufficiently large constant $M:=M(\epsilon)>0$, independent of $n>0$, such that for  sufficiently large $n>0$,
\begin{equation*}
|w_i^{a_n}(x)|,\ \ | w_i(x)|\leq\f\epsilon4\ \text{ in }\ \R^3\backslash B_M,\ \,i=1,\cdots,N,
\end{equation*}
which implies that
\begin{equation*}
\sup_{|x|\geq M}|\hat w_i^{a_n}-w_i(x)|\leq\f\epsilon2,\ \,i=1,\cdots,N.
\end{equation*}
We then obtain from  \eqref{lic} that
\begin{equation*}
w_i^{a_n}:=\varepsilon_{a_n}^{\f32}u_i^{a_n}(\varepsilon_{a_n} x+x_n)\to w_i\ \text{ strongly in }  L^\infty(\R^3)\ \text{ as }\ n\to\infty,\ \,i=1,\cdots,N.
\end{equation*}
This completes the proof of Lemma \ref{lem4.2}.\qed
\vspace{5pt}

Applying Lemma \ref{lem4.2}, we now complete the proof of  Theorem \ref{1.2T}.
\vspace{5pt}

\noindent{\bf Proof of Theorem \ref{1.2T}.}
Following Lemma \ref{lem4.2}, it suffices to prove \eqref{C_0} and \eqref{1-1}. We first note  from \eqref{a*} and  \eqref{hwi} that
\begin{equation}\label{an}
\begin{split}
I_{a_n}(N)=E_{a_n}(\gamma_{a_n})=&\f{1}{\varepsilon_{a_n}^2}\Big(\operatorname{Tr}(-\Delta\widetilde\gamma_{a_n})-a_N^*\int_{\R^3}\rho_{\widetilde\g_{a_n}}^{\f53}dx\Big)\\
&+\varepsilon_{a_n}^2\int_{\R^3}\rho_{\widetilde\g_{a_n}}^{\f53}dx+\int_{\R^3}V\l(\varepsilon_{a_n} x+x_n\r)\rho_{\widetilde\g_{a_n}}dx\\
\geq&\varepsilon_{a_n}^2\int_{\R^3}\rho_{\widetilde\g_{a_n}}^{\f53}dx+\int_{\R^3}V\l(\varepsilon_{a_n} x+x_n\r)\rho_{\widetilde\g_{a_n}}dx,
\end{split}
\end{equation}
where $\varepsilon_{a_n}=(a_N^*-a_n)^{\f14}>0$, $x_n$ is a global maximum point of $\rho_{\g_{a_n}}$, and $\lim_{n\to\infty}x_n=x_0\in\R^3$ satisfies $V(x_0)=0$.

Denote $(p_n,z_n):= x_n$ and $(p_0,0):=x_0$, where $p_n,\,p_0\in\R^2$ and $z_n\in\R$. We claim that $\Big\{\f{|p_n |-|p_0|}{\varepsilon_{a_n}}\Big\}\subset \R$ and $\Big\{\f{|z_n| }{\varepsilon_{a_n}}\Big\}\subset \R$ are both bounded uniformly in $n$. On the contrary, suppose that  up to a subsequence, one of them tends to infinity as $n\to\infty$. We then derive from \eqref{eV} and \eqref{4-9} that
\begin{equation}\label{4-3}
\begin{split}
&\lim_{n\to\infty}\varepsilon_{a_n}^{-2}\int_{\R^3}V\l(\varepsilon_{a_n} x+x_n\r)\rho_{\widetilde\g_{a_n}}dx\\
=&\lim_{n\to\infty}\int_{\R^3}\Big[\omega_1\Big(\Big|{p}+\f{p_n}{\varepsilon_{a_n}}\Big|-\f{|p_0|}{\varepsilon_{a_n}}\Big)^2+\omega_2\Big(x_3+\f{z_n}{\varepsilon_{a_n}}\Big)^2\Big]\rho_{\widetilde\g_{a_n}}dx=\infty,
\end{split}
\end{equation}
where we denote $p:=(x_1,x_2)\in\R^2$. We then get from  \eqref{an} and \eqref{4-3} that for any constant $C>0$,
\begin{equation*}
I_{a_n}(N)\geq C\varepsilon_{a_n}^2\ \ \hbox{as}\ \ n\to\infty,
\end{equation*}
which however contradicts with Theorem \ref{ee}. Hence, the above claim is true. We then obtain that there exists a subsequence, still denoted by $\{a_n\}$, of $\{a_n\}$ such that
\begin{equation*}
\lim_{n\to\infty}\f{|p_n|-|p_0|}{\varepsilon_{a_n}}=C_0\ \text{ and }\ \lim_{n\to\infty}\f{z_n }{\varepsilon_{a_n}}=C_1
\end{equation*}
hold for some constants $C_0$ and $C_1$. This therefore implies that \eqref{C_0} holds true.

We next prove   \eqref{1-1} as follows. Applying the dominated convergence theorem, it follows from \eqref{hed} that
\begin{equation}\label{4-11}
\begin{split}
&\quad\lim_{n\to\infty}\varepsilon_{a_n}^{-2}\int_{\R^3}V\l(\varepsilon_{a_n}x+x_n\r)\rho_{\widetilde\g_{a_n}}dx\\
&= \lim_{n\to\infty}\int_{\R^3}\Big[\omega_1\Big(\Big|{p}+\f{p_n}{\varepsilon_{a_n}}\Big|-\f{|p_0|}{\varepsilon_{a_n}}\Big)^2+\omega_2\Big(x_3+\f{z_n}{\varepsilon_{a_n}}\Big)^2\Big)\Big]\rho_{\widetilde\g_{a_n}}dx\\
&=\lim_{n\to\infty}\int_{\R^3}\Big[\omega_1\Big(\f{\l|\varepsilon_{a_n}p+p_n\r|-|p_n|}{\varepsilon_{a_n}}+\f{|p_n|-|p_0|}{\varepsilon_{a_n}}\Big)^2+\omega_2\Big(x_3+\f{z_n}{\varepsilon_{a_n}}\Big)^2\Big]\rho_{\widetilde\g_{a_n}}dx\\
&=\int_{\R^3}\Big[\omega_1\Big(\f{p\cdot p_0}{|p_0|}+C_0\Big)^2+\omega_2(x_3+C_1)^2\Big]\rho_{\g}dx\\
&=\int_{\R^3}\Big[\omega_1\Big(\f{(x_1,x_2)\cdot p_0}{A}+C_0\Big)^2+\omega_2(x_3+C_1)^2\Big]\rho_{\g}dx,
\end{split}
\end{equation}
where $\rho_{\g}=\sum_{i=1}^N|  w_i|^2$ is as in Lemma \ref{lem4.2}. We thus obtain from \eqref{4-9}, \eqref{an} and \eqref{4-11} that
\begin{equation*}
\lim_{n\to\infty}\varepsilon_{a_n}^{-2}{I_{a_n}(N)}\geq \int_{\R^3}\rho_{\g}^{\f53}dx+\int_{\R^3}\Big[\omega_1\Big(\f{(x_1,x_2)\cdot p_0}{A}+C_0\Big)^2+\omega_2(x_3+C_1)^2\Big]\rho_{\g}dx.
\end{equation*}
		
On the other hand, define
\begin{equation*}
u_i(x)=\varepsilon_{a_n}^{-\f 32} w_i\Big(\f {x-x_n}{\varepsilon_{a_n}}\Big),\ \ i=1,\cdots,N,\ \ \g':=\sum_{i=1}^N| u_i\rangle\langle u_i|,
\end{equation*}
where $w_i$ is as in \eqref{4-9}. Direct calculations then yield   that
\begin{equation*}
\begin{split}
I_{a_n}(N)&\leq \operatorname{Tr}(-\Delta+V)\g'-a_n\int_{\R^3}\rho_{\g'}^{\f 53}dx\\
&=\varepsilon_{a_n}^{-2}\Big[\operatorname{Tr}(-\Delta\g)-a_N^*\int_{\R^3}\rho_{\g}^{\f 53}dx\Big]+\varepsilon_{a_n}^2\int_{\R^3}\rho_{\g}^{\f 53}dx\\
&\quad+\varepsilon_{a_n}^2\int_{\R^3}\Big[\omega_1\Big(\Big|{p}+\f{p_n }{\varepsilon_{a_n}}\Big|-\f{|p_0|}{\varepsilon_{a_n}}\Big)^2+\omega_2\Big(x_3+\f{z_n}{\varepsilon_{a_n}}\Big)^2\Big]\rho_{\g}dx\\
&=\varepsilon_{a_n}^2\Big\{\int_{\R^3}\rho_{\g}^{\f 53}dx+\int_{\R^3}\Big[\omega_1\Big(\f{\l|\varepsilon_{a_n}p+p_n\r|-|p_n|}{\varepsilon_{a_n}}+\f{|p_n|-|p_0|}{\varepsilon_{a_n}}\Big)^2\\
&\qquad\qquad+\omega_2\Big(x_3+\f{z_n}{\varepsilon_{a_n}}\Big)^2\Big]\rho_{\g}dx\Big\}.
\end{split}
\end{equation*}
By the exponential decay of $\rho_{\g}$, we have
\begin{equation*}
\begin{split}
\lim_{n\to\infty}\varepsilon_{a_n}^{-2}I_{a_n}(N)&\leq\lim_{n\to\infty}\Big\{\int_{\R^3}\rho_{\g}^{\f 53}dx+\int_{\R^3}\Big[\omega_1\Big(\f{\l|\varepsilon_{a_n}p+p_n\r| -|p_n|}{\varepsilon_{a_n}}+\f{|p_n|-|p_0|}{\varepsilon_{a_n}}\Big)^2\\
&\qquad\qquad+\omega_2\Big(x_3+\f{z_n}{\varepsilon_{a_n}}\Big)^2\Big]\rho_{\g}dx\Big\}\\
&=\int_{\R^3}\rho_{\g}^{\f 53}dx+\int_{\R^3}\Big[\omega_1\Big(\f{(x_1,x_2)\cdot p_0}{A}+C_0\Big)^2+\omega_2(x_3+C_1)^2\Big]\rho_{\g}dx.
\end{split}
\end{equation*}
We thus conclude that
\begin{equation*}
\begin{split}
\lim_{n\to\infty}\varepsilon_{a_n}^{-2}I_{a_n}(N)=\int_{\R^3}\rho_{\g}^{\f 53}dx+\int_{\R^3}\Big[\omega_1\Big(\f{(x_1,x_2)\cdot p_0}{A}+C_0\Big)^2+\omega_2(x_3+C_1)^2\Big]\rho_{\g}dx,
\end{split}
\end{equation*}
and hence \eqref{1-1} holds true. This completes the proof of Theorem \ref{1.2T}.\qed

\vskip 0.3truein

\noindent {\bf Acknowledgements:} The authors are very grateful to the referees for their valuable suggestions and comments, which lead to to the great improvements of the present paper.

\end{document}